\newtheorem{theorem}{Theorem}
\newtheorem{lemma}{Lemma}
\newtheorem{corollary}{Corollary}
\newtheorem{proposition}{Proposition}
\newtheorem{claim}{\bf Claim}
\theoremstyle{remark}
\newtheorem{definition}{Definition}
\newtheorem{remark}{Remark}
\def\N{\mathbb{N}}
\def\Z{\mathbb{Z}}
\def\Q{\mathbb{Q}}
\def\R{\mathbb{R}}
\def\P{\mathbb{P}}
\def\E{\mathbb{E}}
\def\FF{\mathscr{F}}
\def\GG{\mathscr{G}}
\def\NN{\mathcal N}
\renewcommand{\phi}{\varphi}
\renewcommand{\epsilon}{\varepsilon}
\def\emph{\bf\em}
\def\fG{\mathfrak G}
\def\fH{\mathfrak H} 
\def\o{$^\text{\rm o }$}
\def\red{\color{red}}
\def\black{\color{black}}
\definecolor{orange}{HTML}{e65c00}
\def\edges{\mathsf E}
\def\piv{\text{piv}}
\def\hop{\text{hop}}
\def\short{\text{short}}
\def\hyper{\text{hyper}}
\def\internal{\text{int}}
\def\first{\text{first}}
\def\last{\text{last}}
\begin{document}

\begin{frontmatter}

\title{ Probabilistic and analytical properties of the last passage 
percolation constant in  a weighted random directed graph}
\runtitle{Last passage  percolation  in weighted random directed graph}
\author{Sergey Foss, Takis Konstantopoulos \and Artem Pyatkin}


\begin{abstract}
To each edge $(i,j)$, $i<j$, of the complete directed graph on the integers 
we assign unit weight  with probability $p$
or weight $x$ with probability $1-p$, independently from edge to edge,
and give to each path weight equal to the sum of its edge weights.
If  $W^x_{0,n}$ is the maximum weight of all paths from $0$ to $n$
then $W^x_{0,n}/n \to C_p(x)$, as $n \to \infty$, almost surely,
where $C_p(x)$ is positive and deterministic.
We study $C_p(x)$ as a function of $x$, for fixed $0<p<1$, 
and show that it is a strictly increasing convex function 
that is not differentiable if and only if $x$ is a nonpositive rational
or a positive integer except $1$ or the reciprocal of it.
We allow $x$ to be any real number, even negative, or, possibly, $-\infty$.
The case $x=-\infty$ corresponds to the well-studied directed version 
of the Erd\H{o}s-R\'enyi random graph (known as Barak-Erd\H{o}s graph)
for which $C_p(-\infty) = \lim_{x \to -\infty} C_p(x)$ has been studied as
a function of $p$ in a number of papers.
\end{abstract}

\begin{keyword}[class=MSC2010]
\kwd{60K35}
\kwd{05C80}
\kwd{60K05}
\end{keyword}

\begin{keyword}
\kwd{random graph} 
\kwd{maximal path} 
\kwd{last-passage percolation} 
\kwd{skeleton point} 
\kwd{critical point}
\kwd{regenerative structure} 
\end{keyword}

\end{frontmatter}

\section{Introduction} 

The classical Erd\H{o}s-R\'enyi random graph model  on the set
of integers $\Z$ admits a straightforward directed version: 
a pair $(i,j)$ of vertices, $i<j$, is declared to be an edge directed from $i$ to $j$ with
probability $p$, independently from pair to pair. This was introduced by 
Barak and Erd\H{o}s \cite{BE84}.
Graph and order-theoretic properties of it were studied in \cite{BE84,ABBJ94,BOLBRI97}.
A quantity of interest for this graph is the behavior of the random variable
$L_n$ defined as the maximum length of all paths between
two vertices at distance at most $n$. Motivated by the theory of food webs
in mathematical ecology, Newman and Cohen \cite{NEWCOH86,NEWM92} showed that
$L_n/n$ converges in probability to a positive constant $C_p$,
as $n \to \infty$. It is known that $C_p$ is
a continuous function of $p$ and its properties in the light connectivity regime where
studied; in particular, the derivative at $0$ is equal to $e$. Using
different methods, we showed in \cite{FK03} that the limit above is in the almost
sure sense and obtained good bounds of the function $C_p$ on the whole interval 
$0\le p \le 1$, first by relating the graph to a Markov process (the so-called 
{\emph Infinite Bin Model}), and second by studying the convergence of it to stationarity.
In addition, functional central limit theorems were obtained in \cite{FK03}.
More recently, Mallein and Ramassamy \cite{MR16,MR19} studied a
more general infinite bin model and showed, in particular, 
that $C_p$ is an analytic function of $p$ for $p>0$.
They also obtained explicit estimates for  $C_p$ that improve those of  \cite{FK03}

Another area where versions of the {\emph Barak-Erd\H{o}s random directed graph} 
appear is the stochastic modeling of parallel processing systems.
When jobs arrive randomly in continuous time and cannot be processed independently
because of constraints between them then it is known that the stability of the resulting stochastic dynamical system is intimately related to the longest or heaviest path
in a random graph representing ordering preferences among jobs; see \cite{GEL86,ISONEW94}. In these systems it is often necessary to introduce weights
on the vertices of the graph as well.
In such a case, $L_n$ has to be modified to measure not length but total weight.
Letting then $W_n$ be the maximum weight of all paths between vertices at
distance at most $n$,  \cite{FMS14} shows that 
the growth of $W_n$ is very different depending on whether the second
moment of the typical edge weight is finite or not. The situation for
both edge and vertex weights was studied in \cite{FK18} and included the 
possibility that vertex weights be negative.
Yet another application of a continuous-vertex extension of  
Barak-Erd\H{o}s random directed graphs
 appear in the
physics literature: Itoh and Krapivsky \cite{IK12} introduce a version, called  
``continuum cascade model'' 
of the stochastic ordered graph with set of vertices in $[0,\infty)$ 
and study asymptotics for
the length of longest paths between $0$ and $t$, deriving recursive integral equations for its distribution.

When weights are introduced, the weighted Barak-Erd\H{o}s type of graphs
can be seen as long-range last-passage percolation models. These models
appear in physics and other areas and are typically defined by giving i.i.d.\
random weights to the points and/or edges of a lattice and asking for the behaviour
of the maximum weight path. Whereas in the Barak-Erd\H{o}s weighted or
unweighted graphs the underlying lattice is not {\em a priori} given,
it appears as a result of the analysis: there exists an bi-infinite  collection
of random vertices on which i.i.d.\ random weights appear. These
special random vertices have been called {\emph skeleton points} in
Denisov {\em et al.} \cite{DFK12} or as {\emph posts} in the
literature concerning order theoretic properties of unweighted Barak-Erd\H{o}s 
graphs \cite{ABBJ94,BOLBRI97}. We mention, in passing, that in
Denisov {\em et al.} \cite{DFK12} the edge probability $p$ was made
to depend on the endpoints of the edge also. In the same paper, the vertex set
was extended to be $\Z\times\{1,\ldots,K\}$ where $K$ is a finite
integer and was seen that the existence of posts resulted in a nontrivial 
CLT, identical to those obtained in last-passage percolation problems
and in longest-subsequence problems. Namely, the CLT associated to $L_n$
resulted in a nonnormal distribution but in one that governs the largest
eigenvalue of a certain $K\times K$ random matrix. More striking is 
the result when $K \to \infty$ \cite{KT13} where the use of skeletons of the directed
random graph on $\Z \times \Z$ was made to resemble the last-passage percolation
model of \cite{BM2005} with a corresponding CLT yielding a Tracy-Widom 
distribution. 

The constant $C_p$ has so far only been studied as a function of the edge probability
$p$. It is natural therefore to ask how it depends on the weight distribution
in a weighted Barak-Erd\H{o}s graph. In this paper, we study the following 
weight distribution: for two integers $i<j$, we assign weight $1$ to the edge $(i,j)$ with 
probability $p$ or weight $x$ with probability $1-p$, independently from edge to
edge. We call $G_p(x)$ this random weighted directed graph, allowing for $x$
to range in $[-\infty, +\infty]$.  The weight of a path is the sum of the weights
of its edges. If we let $\Pi_{i,j}$ be the collection of all finite increasing sequences
$(i_0, i_1, \ldots, i_\ell)$ such that $i_0=i$, $i_\ell=j$ then every element of 
the nonrandom set $\Pi_{i,j}$ is a path in $G_p(x)$.
However, in the Barak-Erd\H{o}s graph, the set of paths from $i$ to $j$ is random.
We can unify the two pictures by letting $G_p(-\infty)$ denote the Barak-Erd\H{o}s graph
declaring that a path with weight $-\infty$ is not a feasible path.

We first observe that the growth rate of the heaviest path, denoted by $C_p(x)$,
exists almost surely and in $L^1$, when $x>-\infty$
and that it is a strictly positive constant.
When $x=-\infty$, the Barak-Erd\H{o}s graph, $C_p(-\infty)$ is the same
as the constant $C_p$ studied in \cite{FK03,MR16,MR19} and also
in  \cite{NEWCOH86,NEWM92} (with a different notation).
In this case, the growth rate of the heaviest=longest path is in the almost sure
but not in the $L^1$ sense. We have that $\lim_{x \to -\infty} C_p(x) =
C_p(-\infty)$.
We then study the behaviour of $C_p(x)$ when $-\infty\le x \le \infty$.
We show that it is a convex function on $[-\infty, \infty]$ and then study
its smoothness properties showing that it fails to be differentiable 
when $x$ is a nonpositive rational number, a positive integer other than $1$,
or the inverse of such a positive integer.
We use entirely probabilistic-combinatorial methods based on the use of
the aforementioned skeleton points and on the construction of paths that 
have certain criticality properties.
We finally discuss extension of the model $G_p(x)$ and pose some
intriguing open problems.

Problems of differentiability of percolation constants arise in models in statistical
physics and have appeared previously in the context of first-passage percolation.
Steele and Zhang \cite{SZ}, motivated by an old problem due to Hammerlsey and Welsh,
studied differentiability of the first passage percolation constant 
$\mu(F)$ on the integer lattice $\Z^2$ when edge weights are i.i.d.\ nonnegative
random variables with common distribution $F$ (typically, $F$ is
Bernoulli distribution $p\delta_1 + (1-p) \delta_0$). 
Specifically, if $a_{0,n}$ denotes the infimum of the weights of all
paths between $(0,0)$ and $(n,0)$, then
$\mu(F) = \lim_{n\to \infty} a_{0,n}/n$, a.s.\ and in $L^1$, by Kingman's
subbaditive ergodic theorem. Replacing $F$ by $F_t$, the distribution 
of the edge weight shifted by $t$, Steele and Zhange proved that
the concave function $t \mapsto \mu(F_t)$ fails to be differentiable at $t=0$
when $p$ is in some left neighborhood of $1/2$.
Although this model is different from ours, there is a certain similarity in that
both papers use a combination of the probabilistic
techniques with a sample-path analysis of a class of deterministic graphs and
produce examples of paths having desired properties.
Very recently, Krishnan, Rassoul-Agha and Sepp\"al\"ainen \cite{KRS}
extended the result of \cite{SZ} by studying a higher-dimensional models,
more general weight distributions and first passage percolation constants along 
general directions and showed, in particular, nondifferentiability of $\mu(F_t)$
at points other than $0$. The main theorem of our paper, see Theorem \ref{thmmain} below,
identifies completely all the points of nondifferentiability of the last-passage percolation
constant $x \mapsto C_p(x)$. At the end of the paper we make some remarks
concerning how one could generalize the methods and results to more general
edge-weight distributions.

\section{The model and some basic properties}
\label{secbasic}
Let $p$ be strictly between $0$ and $1$ in order to avoid trivialities. 
We construct the family of random {\emph weighted directed graphs} $G_p(x)$, 
$-\infty \le x \le \infty$,
by first letting $\{\alpha_{i,j}:\, i,j \in \Z,\, i<j\}$ be an i.i.d.\ collection
of random variables on some probability space $(\Omega, \FF, \P)$ such that
\[
\P(\alpha_{i,j}=1)=p, \quad \P(\alpha_{i,j}=0)=q=1-p,
\]
and then letting the weight of $(i,j)$ be
\begin{equation}
\label{xweight}
w^x_{i,j} := \alpha_{i,j} + x (1-\alpha_{i,j}), \quad x > -\infty.
\end{equation}
If $x=-\infty$, then $w^{-\infty}_{i,j} := \lim_{x \to -\infty} w^x_{i,j}$,
which is $1$ if $\alpha_{i,j}=1$ or $-\infty$ otherwise.
If we agree that $-\infty$ denotes the absence of an edge, then,
for the special case when $x=-\infty$, we interpret $G_p(-\infty)$ as the 
the Barak-Erd\H{o}s graph, an {\emph unweighted random directed graph.}
It is often more convenient to be thinking of the complete directed graph 
$K(\Z)$ with vertices
the set $\Z$ of integers and edges the set $\{(i,j) \in \Z \times \Z:\, i<j\}$.
Then the subgraph of $K(\Z)$ consisting of all edges $(i,j)$ with $\alpha_{i,j}=1$ 
is $G_p(-\infty)$. 
It is often convenient and more descriptive 
to call an edge $(i,j)$ of $K(\Z)$ {\emph blue}
if $\alpha_{i,j}=1$ or {\emph red} if $\alpha_{i,j}=0$.

A path $\pi$ in $K(\Z)$ is any finite increasing sequence $(i_0, \ldots, i_\ell)$ of
integers. The pairs $(i_{k-1}, i_k)$, $1\le k \le \ell$, are the edges of $\pi$.
The number $\ell$ of edges of $\pi$ is its length, also denoted by $|\pi|$.
Such a path is a path in $G_p(-\infty)$ if 
$\alpha_{i_{k-1},i_k}=1$ for all $1\le k \le \ell$.

We let 
\[
\Pi_{i,j}:=\{(i_0,\ldots,i_\ell):\, i=i_0<i_1< \cdots < i_\ell=j, \ell \in \N\}
\]
be the set of all paths in $K(\Z)$ from $i$ to $j$. 
(The cardinality of $\Pi_{i,j}$ is the same as the number of subsets
of $\{i+1, \ldots, j-1\}$, i.e., $2^{j-i-1}$.)
The weight of a path $\pi=(i_0,\ldots,i_\ell)$ of $G_p(x)$ is
\begin{equation}\label{wpath}
w^x(\pi) = w^x_{i_0,i_1}+\cdots + w^x_{i_{\ell-1},i_\ell}.
\end{equation}
This works even when $x=-\infty$.
Indeed, $w^{-\infty}(\pi)=\ell$ if
$\alpha_{i_{k-1},i_k}=1$ for all $1\le k \le \ell$;
otherwise,  $w^{-\infty}(\pi)=-\infty$.
We are interested in the quantity
\begin{equation}\label{Wxij}
W^x_{i,j} := \max\{w^x(\pi):\, \pi \in \Pi_{i,j}\}.
\end{equation}
Note that setting $x=-\infty$ in this equation we obtain
\[
W^{-\infty}_{i,j}=
\begin{cases}
-\infty, & \text{ if there is no path of $G_p(-\infty)$ from $i$ to $j$},
\\
\max |\pi|,
& \text{ otherwise},
\end{cases}
\]
where the last maximum is over all paths $\pi$ in  $G_p(-\infty)$ 
from $i$ to $j$.
Hence
$W^{-\infty}_{i,j}$
is the maximum length of all paths from $i$ to $j$ in $G_p(-\infty)$, provided
that such a path exists.
In other words, formula \eqref{Wxij}, appropriately interpreted,
gives the quantity of interest in all cases, that is, for all graphs $G_p(x)$,
$-\infty \le x \le \infty$.

\begin{theorem}[\cite{FK03}]
The following holds almost surely:
\[
C_p
:= \lim_{n \to \infty} \frac{(W^{-\infty}_{0,n})^+}{n}
= \inf_{n \ge 1} \frac{\E (W^{-\infty}_{0,n})^+}{n},
\]
where $C_p>0$. The first limit also holds in the $L^1$ sense.
\end{theorem}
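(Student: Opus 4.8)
The plan is to deduce the almost sure limit from a regenerative decomposition of heaviest paths at \emph{skeleton points}, and then to obtain the $L^1$ statement and the positivity by soft arguments. The one genuinely delicate feature is the value $-\infty$: the variable $W^{-\infty}_{0,n}$ is not real-valued and has no finite negative moment, because $0$ need not be joined to $n$ in $G_p(-\infty)$; everything hinges on controlling this.

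First I would establish a connectivity estimate. Fix $n$ and consider the events $E_k=\{\alpha_{0,k}=1,\ \alpha_{k,n}=1\}$, $1\le k\le n-1$: they involve pairwise disjoint edge sets, hence are independent with $\P(E_k)=p^2$, and on $\bigcup_k E_k$ there is a two-edge path $0\leadsto n$. Thus $\P(0\not\leadsto n)\le(1-p^2)^{n-1}$, which is summable, so by Borel--Cantelli almost surely $0\leadsto n$ for all large $n$, and in particular $(W^{-\infty}_{0,n})^{+}=W^{-\infty}_{0,n}$ eventually. The same summability underlies skeleton points: following the cited works, call $k$ a skeleton point if $i\leadsto k$ for every $i<k$ and $k\leadsto j$ for every $j>k$ (two conditions on disjoint sets of edges); a positive-probability estimate then shows that the set $\SS$ of skeleton points is a stationary ergodic point process on $\Z$ of positive intensity.

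The core step is the a.s.\ convergence, which rests on two observations: (i) the pointwise superadditivity $W^{-\infty}_{0,m+n}\ge W^{-\infty}_{0,m}+W^{-\infty}_{m,m+n}$, valid with the convention $-\infty+a=-\infty$ and obtained by concatenation; and (ii) a heaviest path joining $0$ to $n$ must pass through every skeleton point strictly between them --- an edge $(a,b)$ with $a<k<b$ and $k\in\SS$ could be replaced by the nonempty blue paths $a\leadsto k$ and $k\leadsto b$, strictly increasing the length. Writing $\dots<S_0\le0<S_1<\dots$ for the elements of $\SS$, observation (ii) gives, on $\{0\leadsto n\}$, a decomposition of $W^{-\infty}_{0,n}$ into the interior increments $R_i:=W^{-\infty}_{S_i,S_{i+1}}$ over the skeleton intervals lying in $[0,n]$, plus two boundary terms that are $o(n)$ almost surely; the pairs $(S_{i+1}-S_i,R_i)$ form a stationary ergodic sequence with finite means, so the strong law of large numbers (in renewal--reward form) yields $W^{-\infty}_{0,n}/n\to C_p:=\E R_0/\E(S_1-S_0)$ a.s. (An alternative route avoiding the renewal machinery: give each red edge weight $-M$, apply Kingman's superadditive ergodic theorem to the resulting real-valued superadditive array, then let $M\to\infty$.) Since $R_i\ge1$ and $\E(S_1-S_0)<\infty$ we get $C_p>0$; in fact the greedy blue path from $0$, whose steps are i.i.d.\ Geometric$(p)$, reaches level $n$ in $(1+o(1))pn$ steps, so $C_p\ge p$.

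It remains to pass to $L^1$ and to the infimum. Each path in $\Pi_{0,n}$ has at most $n$ edges of weight at most $1$, so $0\le(W^{-\infty}_{0,n})^{+}/n\le1$; bounded convergence then upgrades the a.s.\ limit to an $L^1$ one and gives $C_p=\lim_n\E(W^{-\infty}_{0,n})^{+}/n$. For the infimum I would pass to the subadditive variant $g(n):=\E\bigl[\max\{w^{-\infty}(\pi):\pi\in\Pi_{i,j},\ 0\le i\le j\le n\}\bigr]^{+}$, the expected heaviest weight of a sub-path inside $\{0,\dots,n\}$: such a path is split by the gap between $m$ and $m+1$ into two sub-paths plus at most one crossing edge, so $g(m+n)\le g(m)+g(n)+1$, and Fekete's lemma applied to $n\mapsto g(n)+1$ makes $g(n)/n$ converge to its infimum; the connectivity estimate gives $g(n)-\E(W^{-\infty}_{0,n})^{+}=o(n)$, so this common limit is $C_p$ and coincides with the infimum in the statement. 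Throughout, the single hard point is the non-finiteness of $W^{-\infty}_{0,n}$; once the bound $\P(0\not\leadsto n)\le(1-p^2)^{n-1}$ is in hand, the remaining arguments are routine.
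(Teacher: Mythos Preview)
Your route to the almost-sure and $L^1$ limits is genuinely different from the paper's. The paper does not invoke skeleton points at all for this theorem: it introduces the auxiliary quantity $L_{i,j}$, the maximal length of any blue path with both endpoints in $[i,j]$, observes the near-subadditivity $L_{i,k}\le L_{i,j}+L_{j,k}+1$, applies Kingman's subadditive ergodic theorem to obtain $L_{0,n}/n\to C_p$ a.s.\ and in $L^1$ with $C_p=\inf_n \E L_{0,n}/n$, and then transfers this to $(W^{-\infty}_{0,n})^+$ via the (cited) fact that $L_{0,n}=(W^{-\infty}_{0,n})^+$ eventually. Your regenerative argument via skeleton points and renewal--reward is perfectly sound for the a.s.\ limit, and bounded convergence handles $L^1$ as you say; it is also the very machinery the paper develops in the next subsection for the $x>-\infty$ case, so in a sense you are anticipating rather than bypassing it. The Kingman route is shorter for this single statement; yours is self-contained once the skeleton structure is in hand.

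There is, however, a real gap in your treatment of the infimum. The sentence ``the connectivity estimate gives $g(n)-\E(W^{-\infty}_{0,n})^+=o(n)$'' does not follow: knowing that $0\leadsto n$ says nothing about whether the maximal subpath in $[0,n]$ has endpoints $0$ and $n$; for that you would need $0\leadsto a$ and $b\leadsto n$ for \emph{every} $a,b\in[0,n]$, which your Borel--Cantelli bound does not provide. You can repair this by reusing the skeleton decomposition (the maximal subpath also passes through all skeleton points in its span, so $L_{0,n}/n$ has the same renewal--reward limit), but note that even a correct $o(n)$ estimate only gives $\lim_n g(n)/n=\lim_n \E(W^{-\infty}_{0,n})^+/n$; it does \emph{not} identify $\inf_n \E(W^{-\infty}_{0,n})^+/n$ with $\inf_n g(n)/n$, since the two sequences can differ substantially for small $n$. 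Your Fekete argument yields $C_p=\inf_n (g(n)+1)/n$, which is the infimum for $L_{0,n}$, not for $(W^{-\infty}_{0,n})^+$. (For what it is worth, the paper's own sketch also establishes the infimum formula for $\E L_{0,n}/n$ rather than for $\E(W^{-\infty}_{0,n})^+/n$; indeed $\E(W^{-\infty}_{0,1})^+=p$, which for $0<p<1$ is strictly smaller than $C_p$, so the infimum as literally written in the displayed statement cannot hold without reinterpreting it via $L_{0,n}$.)
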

As observed in \cite{FK03}, perhaps the quickest way to obtain this is to prove
that the quantity $L_{i,j}$, defined as the maximum length of all
paths in $G_p(-\infty)$ starting and ending at points between $i$ and $j$, satisfies
$L_{i,k} \le L_{i,j} + L_{j,k}+1$ for all $i < j < k$, and hence,
using Kingman's subbaditive ergodic theorem \cite[Theorem 10.22]{KALL02},
we have $\lim_{n \to \infty} L_{0,n}/n = C_p$, almost surely and in $L^1$, 
for some deterministic $C_p$,
and $C_p = \inf_{n \ge 1}\E L_{0,n}/n > 0$.
The quantity $(W_{0,n}^{-\infty})^+$, being the
maximum length of all paths  in $G_p(-\infty)$ from $0$ to $n$, is obviously
$\le L_{0,n}$. On the other hand, as shown in \cite{FK03}, 
we have that eventually the sequences $L_{0,n}$ and $(W_{0,n}^{-\infty})^+$
are equal almost surely.

\begin{theorem}
\label{thmx}
For $-\infty < x < \infty$, we have
\[
C_p(x) =  \lim_{n \to \infty} \frac{W^x_{0,n}}{n}
= \lim_{n \to \infty} \frac{(W^x_{0,n})^+}{n},
\]
almost surely and in $L^1$.
\end{theorem}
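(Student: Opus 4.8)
The plan is to recognize the family $(W^x_{i,j})_{i<j}$ as a stationary superadditive array and apply Kingman's subadditive ergodic theorem, exactly as in the argument recalled above for $L_{i,j}$. The one genuinely new ingredient is an {\emph a priori} linear upper bound on $W^x_{0,n}$: it forces the limiting constant to be finite, so that Kingman's theorem gives $L^1$ convergence and not merely almost sure convergence to a possibly infinite limit, and it is also what lets us drop the positive part, in contrast with the case $x=-\infty$ where $W^{-\infty}_{0,n}$ can equal $-\infty$. Throughout, write $K_x:=\max(1,|x|)<\infty$.

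First I would establish superadditivity: for $i<j<k$, concatenating a $w^x$-optimal path from $i$ to $j$ with a $w^x$-optimal path from $j$ to $k$ produces an increasing sequence, hence an element of $\Pi_{i,k}$, whose weight is $W^x_{i,j}+W^x_{j,k}$; here one uses that $K(\Z)$ is the {\emph complete} directed graph, so concatenation never violates feasibility. Hence $W^x_{i,k}\ge W^x_{i,j}+W^x_{j,k}$, i.e.\ $(-W^x_{i,j})$ is subadditive. Next I would check the hypotheses of Kingman's theorem for $(-W^x_{i,j})$: since $\{\alpha_{i,j}\}$ is i.i.d., the index shift $\theta$ is measure preserving and ergodic on $(\Omega,\FF,\P)$ and $W^x_{i+1,j+1}=W^x_{i,j}\circ\theta$, which yields the required stationarity and ergodicity; the unit-scale variable $W^x_{0,1}=w^x_{0,1}\in\{1,x\}$ is bounded, hence integrable; and, since every edge weight is at most $K_x$ while every path in $\Pi_{0,n}$ has at most $n$ edges, one has the deterministic bound $W^x_{0,n}\le K_x\,n$, so that $\sup_{n\ge1}\E W^x_{0,n}/n\le K_x<\infty$.

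Applying \cite[Theorem 10.22]{KALL02} then gives, almost surely and in $L^1$, $W^x_{0,n}/n\to C_p(x):=\sup_{n\ge1}\E W^x_{0,n}/n$, a finite constant with $0\le C_p(x)\le K_x$. For the positive-part version, the single-edge path $(0,n)\in\Pi_{0,n}$ shows $W^x_{0,n}\ge w^x_{0,n}\ge -K_x$, so $0\le (W^x_{0,n})^+-W^x_{0,n}\le K_x$; dividing by $n$ and letting $n\to\infty$ shows that $(W^x_{0,n})^+/n$ has the same almost sure and $L^1$ limit $C_p(x)$. (Comparing with the blue subgraph, any longest blue path from $0$ to $n$ has $w^x$-weight equal to its length, so $(W^x_{0,n})^+\ge (W^{-\infty}_{0,n})^+$ and hence $C_p(x)\ge C_p>0$.) I do not expect a serious obstacle, since the argument mirrors the one for $L_{i,j}$; the only points requiring attention are producing the linear bound $W^x_{0,n}\le K_x n$ — without it Kingman would only give almost sure convergence to a possibly infinite limit — and noting that the concatenation and the single-edge comparison are legitimate precisely because $K(\Z)$ is complete.
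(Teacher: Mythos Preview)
Your proposal is correct and follows essentially the same route as the paper: observe superadditivity $W^x_{i,j}+W^x_{j,k}\le W^x_{i,k}$, apply Kingman's subadditive ergodic theorem, and deduce positivity from $C_p(x)\ge C_p(-\infty)=C_p>0$. You simply spell out details the paper leaves implicit---the deterministic linear bound $W^x_{0,n}\le K_x n$ ensuring finiteness of the limit, and the lower bound $W^x_{0,n}\ge -K_x$ handling the positive-part version---so there is no substantive divergence.
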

One way to obtain this result is to observe that we have superadditivity:
$W^x_{i,j} + W^x_{j,k} \le W^x_{i,k}$.
Again, by Kingman's theorem, the first limit exists almost surely and in $L^1$
and equals a deterministic constant $C_p(x)$. It is a positive constant because
$C_p(x) \ge C_p(-\infty) = C_p > 0$.

The main result of this paper is stated as follows.

\begin{theorem}
\label{thmmain}
The function $x \mapsto C_p(x)$ is differentiable everywhere except
when $x$ is a nonpositive rational or equal to $k$ or $1/k$ for
some integer $k \ge 2$.
\end{theorem}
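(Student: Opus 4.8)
The plan is to combine the skeleton-point (regenerative) decomposition of the heaviest path with a sample-path analysis of maximal paths between regeneration points, in the spirit of the probabilistic/deterministic combination used in \cite{FK03} and in the first-passage works \cite{SZ}. First I would, following \cite{FK03, DFK12}, construct from the blue graph $G_p(-\infty)$ alone --- so that the same points serve for every $x$ --- a stationary ergodic sequence of ``critical points'' $\cdots<\sigma_{-1}<\sigma_0<\sigma_1<\cdots$ with the property that every heaviest path of $G_p(x)$ through any region passes through each critical point lying in it. Then $W^x_{\sigma_m,\sigma_n}=\sum_{k=m}^{n-1}W^x_{\sigma_k,\sigma_{k+1}}$ simultaneously in $x$, the blocks are i.i.d., and a renewal-reward argument as in Theorem~\ref{thmx} yields
\[
C_p(x)=\frac{\E\, W^x_{\sigma_0,\sigma_1}}{\E(\sigma_1-\sigma_0)},\qquad -\infty<x<\infty .
\]
Writing $b(\pi)$ and $r(\pi)$ for the numbers of blue and red edges of a path $\pi$, the weight $w^x(\pi)=b(\pi)+x\,r(\pi)$ of \eqref{wpath} is affine in $x$; since $\Pi_{\sigma_0,\sigma_1}$ is a.s.\ finite, $x\mapsto W^x_{\sigma_0,\sigma_1}$ is a.s.\ piecewise linear and convex, with nonnegative integer slopes and finitely many breakpoints, all rational. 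The slope increment of a piecewise linear convex function at a point is nonnegative and vanishes away from breakpoints, so applying dominated convergence to the difference quotients of $C_p$ shows that $C_p$ is differentiable at $x_0$ if and only if, with probability one, $x\mapsto W^x_{\sigma_0,\sigma_1}$ has no breakpoint at $x_0$ --- equivalently, with probability one the maximum in \eqref{Wxij} over $\Pi_{\sigma_0,\sigma_1}$ at weight $x_0$ is attained only by paths having one common red-edge count. In particular every irrational $x_0$ is automatically a point of differentiability, since any breakpoint sits at a rational point $(b_1-b_2)/(r_2-r_1)$; the whole question concerns rationals.

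For the special rationals I would exhibit finite ``gadgets'': colourings of a short block $\{0,1,\dots,N\}$ that occur with positive probability, are compatible with $0$ and $N$ being consecutive critical points, and on which two heaviest paths of $\Pi_{0,N}$ tie at $x_0$ with distinct red counts. For $x_0=k\ge2$, colour every edge inside $\{0,\dots,k\}$ blue except the span-$k$ edge $(0,k)$, which is red: the consecutive path has weight $k$ and the single red edge has weight $x$, so $W^x_{0,k}=\max(k,x)$ for $x$ near $k$, a breakpoint with slopes $0$ and $1$. For $x_0=1/k$, $k\ge2$, colour every edge inside $\{0,\dots,k\}$ red except $(0,k)$, which is blue: then the all-red consecutive path has weight $kx$ and the blue edge weight $1$, so $W^x_{0,k}=\max(1,kx)$ near $x=1/k$. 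For a nonpositive rational $x_0=-a/b$ in lowest terms ($a\ge0$, $b\ge1$), build a block on which the maximal weight is attained both by some path $P_1$ and by a path $P_2$ obtained from $P_1$ by a small local modification that replaces part of $P_1$ by a detour with $a$ extra blue edges and $b$ extra red edges, so the two weights coincide precisely at $x_0$ (the case $a=0$ gives $x_0=0$, where one simply splits a blue edge of span $\ge2$ into a blue step followed by a red step). In each case one must also check that no other path of the block is strictly heavier --- a finite verification --- after which non-differentiability at $x_0$ follows from the reduction above.

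For the remaining points --- $x_0>0$ with $x_0\notin\{2,3,\dots\}\cup\{\tfrac12,\tfrac13,\dots\}$, which includes $x_0=1$ --- I would show that a.s.\ all heaviest paths between consecutive critical points have the same red count, so $x\mapsto W^x_{\sigma_0,\sigma_1}$ has no breakpoint at $x_0$. The tool is a straightening estimate: replacing an edge of span $s\ge2$ by its $s$ consecutive unit steps changes a red edge's contribution from $x_0$ to a number in $[\,s\min(1,x_0),\,s\max(1,x_0)\,]$, and a blue edge's contribution from $1$ to a number in the same interval. Comparing these with $x_0$, respectively $1$, one sees that when $x_0>0$ is not of the form $k$ or $1/k$ at least one such replacement is strictly weight-increasing, so a heaviest path has no improvable long edge; consequently its edges have bounded spans (for $x_0>1$: blue edges of span $1$ and red edges of span $\le\lfloor x_0\rfloor$, with the symmetric statement for $x_0<1$), and among the remaining admissible moves the heavier choice is the same for every $x$ in the component of $(0,\infty)\setminus(\{2,3,\dots\}\cup\{\tfrac12,\tfrac13,\dots\})$ containing $x_0$. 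Hence $x\mapsto W^x_{u,v}$ is affine near $x_0$ for all $u<v$ and every realization, so $C_p$ is affine --- in particular differentiable --- near $x_0$; note the contrast with the dense set of corners produced on $(-\infty,0]$ in the previous step. Together with the irrational case this completes the identification of the points of non-differentiability.

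Finally, convexity of $C_p$ is immediate from \eqref{Wxij}--\eqref{wpath} as a supremum of affine functions, and strict monotonicity follows because some heaviest path in $[\sigma_0,\sigma_1]$ always has red-density bounded below by a deterministic $\delta>0$, so $C_p(x+h)-C_p(x)\ge\delta h$. The two places where I expect real work are: making the critical-point construction genuinely uniform in $x$ --- delicate when $x<0$, where heaviest paths really do use red edges to bridge blue-free stretches --- and, above all, upgrading the one-edge straightening estimates into the full exchange argument showing that two heaviest paths between critical points cannot differ in their red counts unless $x_0\in\{2,3,\dots\}\cup\{\tfrac12,\tfrac13,\dots\}$, while at the same time verifying that the gadgets above are realized as genuine critical blocks with positive probability.
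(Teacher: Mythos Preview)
Your overall architecture --- regenerative decomposition via skeleton points, reduction of (non)differentiability to a deterministic ``two tied maximizers with different red counts'' criterion, explicit gadgets for the bad rationals, and a structural argument for the good ones --- is exactly the paper's. But two of your concrete steps fail as written.

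\textbf{Skeleton points are not uniform in $x$.} The blue-graph skeleton points of \cite{FK03,DFK12} have the ``every maximal path passes through them'' property only for $x<2$ (Lemma~\ref{maxpathskel}): once $x\ge 2$, a single red edge can outscore two consecutive blue ones, so a maximal path may jump over a skeleton point and the additivity $W^x_{\sigma_m,\sigma_n}=\sum W^x_{\sigma_k,\sigma_{k+1}}$ breaks. Your worry that the construction is ``delicate when $x<0$'' is misplaced --- that range is fine --- the genuine obstruction is $x\ge 2$. The paper does \emph{not} build uniform skeleton points; instead it proves everything for $x<2$ and then invokes the scaling identity $C_p(x)=x\,C_{1-p}(1/x)$ to transport the result to $x>1$.

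\textbf{Your gadgets are incompatible with consecutive skeleton points.} The block $[0,N]$ must belong to $\fH_N$: every interior vertex lies on a blue $0$--$N$ path, \emph{and} every interior vertex fails to be blue-connected to some other vertex. Your $x_0=k$ gadget (all edges blue except $(0,k)$) has every pair blue-connected via consecutive short edges, so condition~2\o\ fails and $0,N$ cannot be \emph{consecutive} skeleton points --- every interior vertex would itself be one. Your $x_0=1/k$ gadget (only $(0,k)$ blue) has no interior vertex on any blue $0$--$k$ path, so condition~1\o\ fails and $0$ cannot be a skeleton point at all. The paper's gadgets (Proposition~\ref{+crit}) are built precisely to live in $\fH_n$: for $1/k$ one takes $n=k+2$ with edges $\{(0,i),(i,n):1\le i\le n-1\}\cup\{(1,n-1)\}$, which satisfies both conditions and admits the tied maximizers $(0,1,n-1,n)$ and $(0,1,2,\ldots,n)$.

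\textbf{The straightening argument needs more.} Your observation that long red edges never survive and that long blue edges have bounded span is correct (this is Lemmas~\ref{shortfact}--\ref{longfact}), but it does not by itself force equal red counts among tied maximizers. The paper's extra ingredient is Lemma~\ref{specialfact}: on a segment where two maximal paths share only their endpoints, a bijection between the ``gaps'' of one path and the long blue edges of the other forces $N(\pi_1)-N(\pi_2)\in\{-1,1\}$ on that segment, whence any tie at $x$ gives $x=1/m$. Your sketch does not contain this step. Similarly, for negative rationals the paper's construction (Proposition~\ref{-qcrit}) is substantially more delicate than ``a detour with $a$ extra blue and $b$ extra red edges'': one must simultaneously ensure $\fH_n$-membership \emph{and} that no third path beats both candidates, which the paper achieves via a Sturm-balanced placement of blue islands.
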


This theorem will be proved in two steps. First, by proving that $C_p(x)$ is
not differentiable at $x$ if and only if $x$ is ``critical''
(in the sense of Definition \ref{defcrit} below), see Theorem \ref{thmdiffcrit}.
Second, by identifying the set of critical points, see Theorem \ref{thmcrit}.

\subsection{Skeleton points and a representation of the inter-skeleton structure}
To obtain further information about the constant $C_p(x)$ as a function of
$x$ we need the notion of skeleton points. We recall the notion below,
along with a fresh look at its structure.
We will say that a path is blue if all its edges are blue
(i.e., $\alpha_e=1$ for all edges $e$ of the path)
or red if all its edges are red
A blue path is a path in the graph $G_p(-\infty)$.
We say that $i$ is a {\emph skeleton point} \cite{FK03,DFK12,FK18}
(or post, in the terminology of \cite{ABBJ94,BOLBRI97}) 
if for all $j<i<k$ there is a blue path from $j$ to $k$
that contains $i$.
The random set of skeleton points is denoted by $\mathcal S$.
Clearly, $\mathcal S$ is stationary (i.e., it has a law that is invariant
under translations), 
it has infinitely many points almost surely, and 
the probability that a fixed integer $i$ is contained in $\mathcal S$ 
does not depend on $i$. This probability is the rate of $\mathcal S$ and is given by
\[
\gamma:= \P(i \in \mathcal S) = \prod_{k=1}^\infty (1-q^k)^2.
\]
We let $\Gamma_0$ be the largest skeleton point that is less than
or equal to $0$, and $\Gamma_1$ 
be the next skeleton point after $\Gamma_0$. We 
thus let $\mathcal S = \{\Gamma_k:\, k \in \Z\}$, where
\[
\cdots < \Gamma_{-1} < \Gamma_0 \le 0 < \Gamma_1 < \Gamma_2 < \cdots
\]
The constant $\gamma$ can be alternatively expressed as
\[
\gamma  = \frac{1}{\E(\Gamma_1-\Gamma_0|\Gamma_0=0)}
= \frac{1}{\E(\Gamma_k-\Gamma_{k-1}|\Gamma_0=0)},
\]
for all $k \in \Z$, thanks to stationarity.

When $u\le v$ are two integers, we write $[u,v]$ for the set of integers
$j$ such that $u \le j \le v$. We  also use the abbreviations
\[
G_p \equiv G_p(-\infty), \quad 
G_{p,u,v}
= \text{ the restriction of $G_p$ on the set of vertices $[u,v]$}.
\]

From previous work, we know that
\begin{lemma}[$G_p$  regenerates over $\mathcal S$, \cite{FK03,FK18}]
\label{indlem}
If $G_p^{(n)}=G_{p, \Gamma_{n-1},\Gamma_n}$ 
is the restriction of the graph $G_p$ on 
$[\Gamma_{n-1},\Gamma_n]$, then the marked point process with points at 
$\Gamma_n$ and marks $G_p^{(n)}$, $n \in \Z$, forms a stationary regenerative 
process. In particular, 
\\
(i) $\mathcal S$ is a stationary renewal process 
and 
\\
(ii) conditional on the event $\{\Gamma_0=0\}$ we have
that $G_p^{(n)}$, $n \in \Z$, is an i.i.d.\ sequence of finite random
directed graphs.
\end{lemma}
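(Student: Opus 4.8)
The plan is to reduce the lemma to one deterministic observation about how a skeleton point cuts the edge configuration into two independent halves, and then to feed that into the general theory of regenerative processes. Throughout I use the elementary fact that a blue path between two vertices uses only vertices lying between them, so that for $u<v$ the event $B_{u,v}$ that there is a blue path from $u$ to $v$ is measurable with respect to $\sigma(\alpha_{a,b}\colon u\le a<b\le v)$. The first step is an independent-split lemma: for every $m\in\Z$ one has the decomposition
\[
\{m\in\mathcal S\}=L_m\cap R_m,
\]
where $L_m:=\bigcap_{j<m}B_{j,m}$ and $R_m:=\bigcap_{k>m}B_{m,k}$; this is immediate because a blue path from $j$ to $k$ that passes through $m$ is exactly the concatenation of a blue path from $j$ to $m$ and a blue path from $m$ to $k$. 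The key point is that $L_m\in\FF^{\le m}:=\sigma(\alpha_{a,b}\colon b\le m)$ while $R_m\in\FF^{\ge m}:=\sigma(\alpha_{a,b}\colon a\ge m)$, and these two $\sigma$-algebras are generated by disjoint sub-families of the i.i.d.\ collection $\{\alpha_{a,b}\}$, hence are independent; incidentally this already gives the product decomposition $\gamma=\P(L_0)\P(R_0)$ of the rate stated above.

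Second, I would upgrade this to a statement about whole half-lines of data. Fix $m$ and work on the event $\{m\in\mathcal S\}$. There, for each $i<m$ one has $i\in\mathcal S$ if and only if $L_i$ holds and $i$ reaches every vertex of $\{i+1,\dots,m\}$ by a blue path — the forward reachability of $i$ beyond $m$ being then automatic by concatenation through the skeleton point $m$ — and this description is $\FF^{\le m}$-measurable. Consequently, on $\{m\in\mathcal S\}$ the ``past'' marked process, i.e.\ the skeleton points $\le m$ together with the block graphs $G_p^{(n)}$ they delimit, agrees with an $\FF^{\le m}$-measurable random element; symmetrically the ``future'' marked process agrees there with an $\FF^{\ge m}$-measurable one. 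Combining this with $\{m\in\mathcal S\}=L_m\cap R_m$ and $\FF^{\le m}\indep\FF^{\ge m}$, conditioning on $\{m\in\mathcal S\}$ amounts to conditioning the past on $L_m$ and, independently, the future on $R_m$. Hence, given $\{m\in\mathcal S\}$, past and future are independent, and by the translation invariance of the whole construction the conditional law of the future, shifted back by $m$, does not depend on $m$ and equals the conditional law of the future given $\{0\in\mathcal S\}$.

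Third, the previous paragraph is precisely the statement that the stationary marked point process $\xi=\sum_n\dirac_{(\Gamma_n,G_p^{(n)})}$ regenerates at every one of its points: conditionally on a point at $m$, the post-$m$ part is independent of the pre-$m$ part and has a law depending only on the shift. By the standard characterization of regenerative processes in terms of Palm measures, such a process is a stationary regenerative process with regeneration epochs $\{\Gamma_n\}$; this yields at once that $\mathcal S$ is a stationary renewal process, which is part (i), and that, under $\P(\cdot\mid\Gamma_0=0)=\P(\cdot\mid 0\in\mathcal S)$, the successive cycles are i.i.d. Since the mark $G_p^{(n)}$ encodes the entire stretch $[\Gamma_{n-1},\Gamma_n]$ (in particular its length $\Gamma_n-\Gamma_{n-1}$), the cycle attached to $\Gamma_n$ is exactly $G_p^{(n)}$, which is part (ii); that a ``backward'' block and a ``forward'' block share a common law is obtained by combining this regenerative decomposition with the reflection symmetry $i\mapsto-i$ of the model, under which $\mathcal S$ and the family of blocks are equivariant.

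I expect the only delicate point to be the bookkeeping in the second step: one must actually exhibit genuine $\FF^{\le m}$- and $\FF^{\ge m}$-measurable versions of the past and future data and verify that they agree with the true data on $\{m\in\mathcal S\}$, because the naive skeleton condition at a point $i\ne m$ does refer to blue paths crossing $m$, and only collapses to a condition on one side thanks to $m$ itself being a skeleton point; the same care is needed to identify the law of a backward block with that of a forward block through the reflection symmetry. Once this is in place, everything else is a routine application of the independence built into the i.i.d.\ edge variables together with translation and reflection invariance.
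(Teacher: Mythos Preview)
The paper does not give its own proof of this lemma; it simply cites \cite[Lemma 4]{DFK12} (and the statement itself is attributed to \cite{FK03,FK18}). Your argument is correct and is precisely the standard one: split $\{m\in\mathcal S\}=A_m\cap B_m$ with $A_m\in\FF^{\le m}$, $B_m\in\FF^{\ge m}$, then observe that on this event the skeleton condition at any $i<m$ collapses to an $\FF^{\le m}$-measurable event because forward reachability beyond $m$ is guaranteed by routing through $m$. You correctly flag the one genuinely delicate point, namely producing honest $\FF^{\le m}$- and $\FF^{\ge m}$-measurable surrogates for the past and future that agree with the true ones on $\{m\in\mathcal S\}$; once that is done, the factorization $\P(\,\cdot\mid A_m\cap B_m)=\P(\,\cdot\mid A_m)\otimes\P(\,\cdot\mid B_m)$ on the two halves is immediate from $\FF^{\le m}\indep\FF^{\ge m}$.

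It is worth noting that the paper itself uses exactly this localization idea a few lines later, in the derivation of \eqref{elementary} and Propositions~\ref{deltaH}--\ref{phiH}: the identity $\{\Gamma_0=0,\Gamma_1=n\}=A_0\cap H_{0,n}\cap B_n$ with $A_0$, $H_{0,n}$, $B_n$ independent is the finite-window incarnation of your splitting argument. So your approach is not merely correct but is the one the paper is implicitly relying on.
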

For a proof of this lemma, see \cite[Lemma 4]{DFK12}.
Let  
\[
\Delta:=\Gamma_2-\Gamma_1.
\]
Due to Lemma \ref{indlem}(i)
\[
\gamma = \frac{1}{\E \Delta} = \frac{1}{\E(\Gamma_{k+1}-\Gamma_k)},
\quad k \neq 0.
\]
Then 
\[
\P(\Delta=n) = \P(\Gamma_2-\Gamma_1=n)
=\P(\Gamma_1=n|\Gamma_0=0)
= \frac{\P(\Gamma_0=0, \Gamma_1=n)}{\P(\Gamma_0=0)}.
\]
Let $\{i \leadsto j\}$ denote the event that there is a
blue path from $i$ to $j$.
Define
\[
A_{u,v} := \bigcap_{u\le j< v} \{ j \leadsto v\}, \quad
B_{u,v} := \bigcap_{u < j \le v} \{u \leadsto j\}
\]
\[
A_i := \bigcap_{j<i} \{ j \leadsto i\},\quad
B_i := \bigcap_{j>i} \{ i \leadsto j\}.
\]
Then
\[
\{i \in \mathcal S \} = A_i \cap B_i.
\]
Define also
\[
F_{0,n} := \bigcap_{j=1}^{n-1} \bigg(
\{i \not \leadsto j \text{ for some } 0<i<j\}
\cup
\{j \not \leadsto i \text{ for some } j<i<n\} \bigg).
\]
Then
\begin{align}
\{\Gamma_0=0, \Gamma_1=n\} 
&= \{0 \in \mathcal S, \, 1 \not \in \mathcal S, \ldots, n-1 \not\in\mathcal S,
n \in \mathcal S\} \nonumber
\\
&= A_0 \cap B_0 \cap  (A_1 \cap B_1)^c 
\cap \cdots \cap (A_{n-1} \cap B_{n-1})^c  \cap A_n \cap  B_n
\nonumber
\\
&= A_0 \cap B_{0,n} \cap  F_{0,n}\cap A_{0,n} \cap  B_n.
\label{elementary}
\end{align}
The reason for this equality is elementary. If we let
$F:=\{1 \not \in \mathcal S, \ldots, n-1 \not\in\mathcal S\}$ then
$\{0 \in \mathcal S\} \cap F \cap \{n \in \mathcal S\} 
= \{0 \in \mathcal S\} \cap F_{0,n} \cap \{n \in \mathcal S\}$
because if we know that $0$ and $n$ are skeleton points then the event
$F$ that for some point $1\le j \le n-1$ fails to be reachable from a lower
point or fails to reach a higher point is necessarily equivalent to
$F_{0,n}$. Thus, 
$\{0 \in \mathcal S, \, 1 \not \in \mathcal S, \ldots, n-1 \not\in\mathcal S,
n \in \mathcal S\} = A_0 \cap B_0 \cap F_{0,n} \cap A_n \cap B_n$.
Furthermore, $B_0 \cap B_n = B_{0,n} \cap B_n$ and
$A_0 \cap A_n = A_{0,n} \cap A_n$. This proves \eqref{elementary}.
It is convenient to group together the middle three events on
the right hand side of  \eqref{elementary}
and let
\[
H_{0,n} := B_{0,n} \cap A_{0,n} \cap F_{0,n},
\]
so that $\{\Gamma_0=0, \Gamma_1=n\} = A_0 \cap B_n \cap H_{0,n}$.
Since $A_0, H_{0,n}, B_n$ are independent we have
\[
\P(\Gamma_0=0, \Gamma_1=n) = \P(A_0) \P(B_n) \P(H_{0,n}).
\]
On the other hand,
\[
\P(\Gamma_0=0) = \P(A_0 \cap B_0) = \P(A_0) \P(B_0),
\]
and since $\P(B_n) = \P(B_0)$ we have obtained that
\begin{proposition}
\label{deltaH}
\[
\P(\Delta=n) = \P(H_{0,n}),
\]
where 
$H_{0,n}$ is the event that for any vertex $j$ between $0$ and $n$
there is a blue path from $0$ to $n$ containing $j$ and there is a vertex $i\neq j$
such that there is no blue path from $\min(i,j)$ to $\max(i,j)$.
\end{proposition}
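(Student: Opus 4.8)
The plan is to read off the identity $\P(\Delta=n)=\P(H_{0,n})$ from the three ingredients already assembled above the statement: the Palm description of the skeleton process $\mathcal S$, the set-theoretic decomposition \eqref{elementary}, and an independence/factorization argument over disjoint edge sets. So the proposition is essentially a repackaging, and I would simply carry out the bookkeeping cleanly.

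First I would record the renewal-theoretic reduction. Since the rate $\gamma$ is strictly positive, $\P(\Gamma_0=0)>0$; and by Lemma~\ref{indlem}(i) the process $\mathcal S$ is a stationary renewal process, so its generic inter-point distance $\Delta=\Gamma_2-\Gamma_1$ has the law of $\Gamma_1$ under the Palm measure, giving
\[
\P(\Delta=n)=\P(\Gamma_1=n\mid \Gamma_0=0)=\frac{\P(\Gamma_0=0,\,\Gamma_1=n)}{\P(\Gamma_0=0)}.
\]
Next I would invoke \eqref{elementary} together with the definition of $H_{0,n}$ to write $\{\Gamma_0=0,\Gamma_1=n\}=A_0\cap H_{0,n}\cap B_n$. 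The key observation is that $A_0$ depends only on edges with both endpoints in $(-\infty,0]$, the event $H_{0,n}$ only on edges with both endpoints in $[0,n]$, and $B_n$ only on edges with both endpoints in $[n,\infty)$ — here one uses that a blue path between two vertices uses only edges whose endpoints lie between those two vertices. These three edge sets are pairwise disjoint, so the events are independent and $\P(\Gamma_0=0,\Gamma_1=n)=\P(A_0)\,\P(H_{0,n})\,\P(B_n)$. In the same way $\P(\Gamma_0=0)=\P(A_0\cap B_0)=\P(A_0)\,\P(B_0)$, and $\P(B_n)=\P(B_0)$ by translation invariance of the edge marks; dividing, the factors $\P(A_0)$ and $\P(B_0)=\P(B_n)$ cancel, leaving $\P(\Delta=n)=\P(H_{0,n})$.

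Finally I would reconcile the verbal description of $H_{0,n}$ in the statement with its definition $B_{0,n}\cap A_{0,n}\cap F_{0,n}$: on $B_{0,n}\cap A_{0,n}$ there is, for every $j\in[0,n]$, a blue path from $0$ to $j$ and a blue path from $j$ to $n$, whose concatenation is a blue path from $0$ to $n$ through $j$ (and conversely); and, given this, the $j$-th term of $F_{0,n}$ reduces exactly to the existence of some $i\in(0,n)$, $i\neq j$, admitting no blue path between $\min(i,j)$ and $\max(i,j)$. The only points that genuinely require attention are the passage from the stationary to the Palm law of $\mathcal S$ (standard renewal theory, already used for the formula $\gamma=1/\E\Delta$) and the verification that $A_0$, $H_{0,n}$, $B_n$ really do depend on disjoint edge sets; I do not expect any serious obstacle beyond these.
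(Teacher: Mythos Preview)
Your proposal is correct and follows essentially the same route as the paper: the Palm reduction $\P(\Delta=n)=\P(\Gamma_0=0,\Gamma_1=n)/\P(\Gamma_0=0)$, the decomposition \eqref{elementary} into $A_0\cap H_{0,n}\cap B_n$, independence over disjoint edge sets, and cancellation via $\P(B_n)=\P(B_0)$. Your added paragraph reconciling the verbal description of $H_{0,n}$ with $B_{0,n}\cap A_{0,n}\cap F_{0,n}$ is a welcome clarification that the paper leaves implicit.
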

\begin{remark}
The essence of this result is that even though the event $\{\Delta=n\}$ depends
on the whole random graph $G_p=G_p(-\infty)$, it has the same probability as  the
the event $H_{0,n}$ that depends only on the restriction of
the graph on the set $[0,n]$.
\end{remark}
With a quite similar argument, we also have that
\begin{proposition}
\label{phiH}
If $\phi(G_{p,\Gamma_0,\Gamma_1})$ is a deterministic real-valued functional
of $G_{p,\Gamma_0,\Gamma_1}$ then
\begin{align*}
\E \left[\phi(G_{p, \Gamma_0, \Gamma_1}) \vert \Gamma_0=0\right]
&= 
\sum_{n=1}^\infty 
\E \left[\phi(G_{p, 0, n}) ; H_{0,n} \right],
\end{align*}
provided that the expectation on the left exists.
\end{proposition}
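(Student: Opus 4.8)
The plan is to imitate, almost verbatim, the derivation of Proposition \ref{deltaH}, the only extra ingredient being a careful check of which edges each event and each functional depends on. First I would decompose the conditioning event according to the value of $\Gamma_1$. Since $\{\Gamma_0=0\}=\bigsqcup_{n\ge 1}\{\Gamma_0=0,\,\Gamma_1=n\}$ is a countable disjoint union, and since on $\{\Gamma_0=0,\Gamma_1=n\}$ the finite graph $G_{p,\Gamma_0,\Gamma_1}$ coincides with $G_{p,0,n}$, countable additivity of the integral (applied to the positive and negative parts of $\phi$ separately, using the assumed integrability of $\phi(G_{p,\Gamma_0,\Gamma_1})$ on $\{\Gamma_0=0\}$, or directly when $\phi\ge 0$) gives
\[
\E\!\left[\phi(G_{p,\Gamma_0,\Gamma_1});\,\Gamma_0=0\right]
=\sum_{n=1}^\infty \E\!\left[\phi(G_{p,0,n});\,\Gamma_0=0,\,\Gamma_1=n\right].
\]

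Next I would invoke the decomposition \eqref{elementary}, namely $\{\Gamma_0=0,\Gamma_1=n\}=A_0\cap B_n\cap H_{0,n}$, and establish the threefold independence. The key observation is a bookkeeping of edge-dependence: $A_0=\bigcap_{j<0}\{j\leadsto 0\}$ is measurable with respect to the edges of $K(\Z)$ having both endpoints in $(-\infty,0]$ (a blue path ending at $0$ from a vertex $<0$ stays in $(-\infty,0]$); $B_n=\bigcap_{j>n}\{n\leadsto j\}$ is measurable with respect to the edges having both endpoints in $[n,\infty)$; and both $H_{0,n}=B_{0,n}\cap A_{0,n}\cap F_{0,n}$ and the functional $\phi(G_{p,0,n})$ are measurable with respect to the edges having both endpoints in $[0,n]$. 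These three edge sets are pairwise disjoint, since a common edge of any two of them would force two of its endpoints to coincide; hence, by the i.i.d.\ assumption on $\{\alpha_{i,j}\}$, the random variables $\1_{A_0}$, $\phi(G_{p,0,n})\1_{H_{0,n}}$ and $\1_{B_n}$ are independent. As $G_{p,0,n}$ takes only finitely many values, $\phi(G_{p,0,n})$ is bounded for each fixed $n$, so the expectation of the product factorizes:
\[
\E\!\left[\phi(G_{p,0,n});\,\Gamma_0=0,\,\Gamma_1=n\right]
=\P(A_0)\,\P(B_n)\,\E\!\left[\phi(G_{p,0,n});\,H_{0,n}\right].
\]

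Finally I would assemble the pieces. Using $\P(B_n)=\P(B_0)$ by stationarity and summing over $n$,
\[
\E\!\left[\phi(G_{p,\Gamma_0,\Gamma_1});\,\Gamma_0=0\right]
=\P(A_0)\,\P(B_0)\sum_{n=1}^\infty \E\!\left[\phi(G_{p,0,n});\,H_{0,n}\right],
\]
and since $\P(\Gamma_0=0)=\P(A_0\cap B_0)=\P(A_0)\,\P(B_0)$ (again by the disjoint-edges argument, $A_0$ depending on edges in $(-\infty,0]$ and $B_0$ on edges in $[0,\infty)$), dividing through by $\P(\Gamma_0=0)$ yields the stated identity. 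I expect no substantial obstacle here: the entire content is the edge-dependence bookkeeping that produces the threefold independence, together with the (routine) justification of the interchange of summation and expectation via the integrability hypothesis. As a sanity check, specializing to $\phi\equiv 1$ recovers $\sum_{n\ge 1}\P(H_{0,n})=1$, consistent with Proposition \ref{deltaH} and $\E\Delta<\infty$ via $\gamma=1/\E\Delta$.
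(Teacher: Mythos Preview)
Your proposal is correct and follows exactly the approach the paper intends: it says only ``With a quite similar argument'' after Proposition~\ref{deltaH}, and what you wrote is precisely that similar argument, namely the decomposition over $\{\Gamma_1=n\}$, the identity \eqref{elementary}, and the threefold independence coming from disjoint edge sets. Your careful bookkeeping of which edges each of $A_0$, $H_{0,n}\cap\{\phi(G_{p,0,n})\}$, $B_n$ depends on, together with the boundedness of $\phi(G_{p,0,n})$ for fixed $n$, fills in every step the paper leaves implicit.
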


The skeleton points $\mathcal S$ for $G_p=G_p(-\infty)$ remain skeleton points for
$G_p(x)$ for $-\infty < x < 2$ in the following sense:
\begin{lemma} \label{maxpathskel}
Let $-\infty < x <2$. 
If $\pi^* \in \Pi_{i,j}$ is maximal, 
that is, $w^x(\pi^*) = W^x_{i,j}$,
then $\pi^*$ contains all skeleton points between $i$ and $j$.
\end{lemma}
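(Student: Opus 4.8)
The plan is to show that any maximal path $\pi^*\in\Pi_{i,j}$ must pass through every skeleton point strictly between $i$ and $j$, using the fact that a skeleton point can always be ``inserted'' into any path via blue sub-paths without decreasing the weight, and that the insertion strictly increases the weight unless the path already visits the skeleton point.

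First I would fix a skeleton point $s$ with $i<s<j$ and suppose, for contradiction, that $\pi^*=(i_0,\ldots,i_\ell)$ does not contain $s$. Let $i_{k-1}<s<i_k$ be the consecutive vertices of $\pi^*$ straddling $s$. By the definition of skeleton point there is a blue path from $i_{k-1}$ to $k$... more precisely, a blue path from $i_{k-1}$ to $s$ and a blue path from $s$ to $i_k$ (apply the skeleton-point property to the pairs $i_{k-1}<s$ and $s<i_k$; the blue path from $i_{k-1}$ through $s$ to any higher point restricts to give these). Concatenating these two blue paths gives a path $\rho$ from $i_{k-1}$ to $i_k$ that passes through $s$, all of whose edges are blue, hence each edge has weight $1$. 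Let $m=|\rho|\ge 2$ be its length. Replacing the single edge $(i_{k-1},i_k)$ of $\pi^*$ by $\rho$ produces a new path $\pi'\in\Pi_{i,j}$ with
\[
w^x(\pi') - w^x(\pi^*) = m - w^x_{i_{k-1},i_k}.
\]
Now $w^x_{i_{k-1},i_k}$ is either $1$ (if that edge is blue) or $x$ (if it is red). Since $x<2\le m$ and $1<2\le m$, in either case $w^x(\pi')>w^x(\pi^*)$, contradicting maximality of $\pi^*$. Hence $\pi^*$ contains $s$, and since $s$ was an arbitrary skeleton point in $(i,j)$, the claim follows.

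The one point requiring care is the extraction of the two blue sub-paths from the skeleton-point definition: the definition gives, for $j'<s<k'$, a \emph{single} blue path from $j'$ to $k'$ through $s$; I would apply it with $j'=i_{k-1}$, $k'=i_k$ to get a blue path $i_{k-1}\leadsto i_k$ through $s$ directly, which is exactly the $\rho$ needed — so in fact no concatenation is necessary, one invocation of the definition suffices. I expect the main (very mild) obstacle is simply being careful that $m\ge 2$ strictly, which holds because $\rho$ visits the intermediate vertex $s\notin\{i_{k-1},i_k\}$, so that the strict inequality $x<2$ and $1<2$ are genuinely enough; the bound $x<2$ is sharp precisely because a detour of length $2$ through $s$ gains weight $2$ over a red edge of weight $x$, and this is non-positive once $x\ge 2$.
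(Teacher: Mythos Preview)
Your proof is correct and follows essentially the same approach as the paper's: suppose a skeleton point $s$ is skipped, take the edge of $\pi^*$ straddling $s$, replace it by a blue detour through $s$ of length at least $2$, and use $w^x_{i_{k-1},i_k}\le\max\{1,x\}<2$ to derive a strict weight increase. Your observation that a single invocation of the skeleton-point definition already yields one blue path $i_{k-1}\leadsto s\leadsto i_k$ is a small streamlining over the paper's presentation, which extracts two blue sub-paths separately.
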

\begin{proof}
Let $s \in \mathcal S$, $i < s < j$, such that $s$ is not in $\pi^*$.
Then let $i_0$ be the largest vertex of $\pi^*$ below $s$ and $j_0$ the smallest
vertex of $\pi^*$ above $s$. Hence $(i_0, j_0)$ is an edge of $\pi^*$.
Since $s$ is a skeleton point there is a blue path $\pi'=(i_0, i_1, \ldots, i_k=s)$
from $i_0$ to $s$ and a blue path $\pi'' = (s, j_\ell, j_{\ell-1}, \ldots, j_0)$
from $s$ to $j_0$.
Consider now the path $\pi^{**}$ that contains the vertices of $\pi$
and of $\pi'$ and $\pi''$. 
We have
\[
w^x(\pi^{**}) = w^x(\pi^*) - w^x_{i_0,j_0} + k + \ell,
\]
since the edge $(i_0,j_0)$ of $\pi^*$ has been replaced by the edges
of $\pi'$ and $\pi''$ and the weights of $\pi'$ and $\pi''$ are $k$ and $\ell$
respectively because their edges have weight $1$ each.
Since $w^x_{i_0,j_0} < 2$
we have
$w^x(\pi^{**})  > w^x(\pi^*) - 2 + k +\ell  \ge w^x(\pi^*)$, contradicting the fact that $\pi^*$ is maximal.
Hence $s$ must belong to $\pi^*$.
\end{proof}

\subsection{Scaling property and side derivatives}
The following scaling property allows us to treat cases $x > 1$ as well. Recall that
$q=1-p$.
\begin{proposition}[Scaling property of $C_p(x)$]
\label{scaling1}
For $x>0$,
\[
C_p(x) = x C_q(1/x).
\]
\end{proposition}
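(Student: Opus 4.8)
The plan is to exhibit an exact distributional (in fact, sample-path) identity between the two weighted graphs, obtained simply by dividing all edge weights by $x$ and relabelling which Bernoulli event carries which weight.

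First I would fix the i.i.d.\ family $\{\alpha_{i,j}\}$ with $\P(\alpha_{i,j}=1)=p$ used to build $G_p(x)$, so that by \eqref{xweight} the weight of edge $(i,j)$ is $w^x_{i,j}=\alpha_{i,j}+x(1-\alpha_{i,j})$. Since $x>0$, dividing by $x$ gives
\[
\tfrac{1}{x}\,w^x_{i,j} = (1-\alpha_{i,j}) + \tfrac{1}{x}\,\alpha_{i,j}
= \beta_{i,j} + \tfrac{1}{x}(1-\beta_{i,j}),
\]
where $\beta_{i,j}:=1-\alpha_{i,j}$. The collection $\{\beta_{i,j}\}$ is i.i.d.\ with $\P(\beta_{i,j}=1)=q$, so the right-hand side is precisely the weight of the edge $(i,j)$ in a copy of the graph $G_q(1/x)$ built from the family $\{\beta_{i,j}\}$. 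Note that $1/x>-\infty$ when $x>0$, so this graph and its constant $C_q(1/x)$ are indeed covered by Theorem \ref{thmx}. Consequently, under this coupling, simultaneously for every path $\pi$, the number $\tfrac1x w^x(\pi)$ equals the $G_q(1/x)$-weight of $\pi$.

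Next I would take maxima over $\pi\in\Pi_{0,n}$. Because $x>0$, multiplication by $1/x$ is increasing and therefore commutes with the maximum, so $\tfrac1x W^x_{0,n}$ equals $W^{1/x}_{0,n}$ computed in the $G_q(1/x)$ graph built from $\{\beta_{i,j}\}$, with probability one. Dividing by $n$, letting $n\to\infty$, and invoking the almost sure convergence in Theorem \ref{thmx} for $G_p(x)$ on the left and for $G_q(1/x)$ on the right, the two deterministic limits must coincide: $\tfrac1x C_p(x)=C_q(1/x)$, i.e.\ $C_p(x)=xC_q(1/x)$.

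There is essentially no obstacle here beyond careful bookkeeping; the one point that must not be overlooked is that the argument genuinely uses $x>0$, both to keep $\tfrac1x(\cdot)$ monotone so that it passes through the maximum, and to ensure that $1/x$ is finite so that Theorem \ref{thmx} applies to $G_q(1/x)$.
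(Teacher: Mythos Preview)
Your proof is correct and follows essentially the same approach as the paper: factor out $x$ from the edge weight $w^x_{i,j}$ and recognize the bracketed quantity as the edge weight in $G_q(1/x)$ built from the complementary Bernoulli family $1-\alpha_{i,j}$, then pass to the maximum and the limit via Theorem~\ref{thmx}. Your write-up is in fact slightly more explicit than the paper's, in particular in noting that $x>0$ is needed so that scaling by $1/x$ commutes with the maximum.
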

\begin{proof}
For $i<j$ write \eqref{xweight} as
$w^x_{i,j} = x \left[\overline\alpha_{i,j} + \frac{1}{x} (1-\overline\alpha_{i,j})\right]$,
where 
\[
\overline\alpha_{i,j}:=1-\alpha_{i,j}.
\]
Hence the weight of edge $(i,j)$ in $G_p(x)$ is $x$ times its
weight in $G_q(1/x)$. 
Using \eqref{Wxij} and Theorem \ref{thmx} we conclude that
$C_p(x) = x C_q(1/x)$.
\end{proof}

We pass on to some preliminary analytical properties of the
function $x \mapsto C_p(x)$. We first obtain a different expression
for the function that is a consequence of Lemma \ref{maxpathskel}
and standard renewal theory. Indeed, due to  Lemma \ref{maxpathskel}
we can write the maximum weight $W^x_{0,n}$ of all paths in $\Pi_{0,n}$
as the sum of maximum weights of paths in $\Pi_{\Gamma_{k-1},\Gamma_k}$,
the sum taken over $k\ge 1$ such that $\Gamma_k \le n$,
plus the maximum weight of paths in $\Pi_{0,\Gamma_1}$,
plus the maximum weight of paths in  $\Pi_{\Gamma_k,n}$.
By the strong law of large numbers (see also \cite{FK03,FK18} for similar arguments)
we obtain
\begin{proposition}
\label{Csum}
For $x < 2$,
\[
C_p(x) = \gamma \E[ W^x_{\Gamma_1, \Gamma_2}]
= \gamma \E[W^x_{\Gamma_0, \Gamma_1}|\Gamma_1=0]
= \gamma \sum_{n=1}^\infty \E[W^x_{0, n}; H_{0,n}].
\]
\end{proposition}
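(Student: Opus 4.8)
The plan is to promote Lemma \ref{maxpathskel} to an \emph{exact} additive decomposition of $W^x_{0,n}$ over the skeleton points in $[0,n]$ and then run a renewal--reward argument. Fix $-\infty<x<2$. The key observation is that if $s\in\mathcal S$ and $i<s<j$, then every maximal path in $\Pi_{i,j}$ passes through $s$ by Lemma \ref{maxpathskel}, hence splits at $s$ into a path of $\Pi_{i,s}$ followed by a path of $\Pi_{s,j}$; combined with the trivial superadditivity $W^x_{i,s}+W^x_{s,j}\le W^x_{i,j}$ this gives the exact identity $W^x_{i,j}=W^x_{i,s}+W^x_{s,j}$. Iterating over the skeleton points $\Gamma_1<\cdots<\Gamma_{K_n}$ lying in $(0,n]$, with $K_n:=\#\{k\ge 1:\Gamma_k\le n\}$, yields
\[
W^x_{0,n}=W^x_{0,\Gamma_1}+\sum_{k=2}^{K_n}W^x_{\Gamma_{k-1},\Gamma_k}+W^x_{\Gamma_{K_n},n}.
\]

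Next I would dispose of the two boundary terms. Since every edge weight lies in $\{1,x\}$, any path $\pi$ satisfies $|w^x(\pi)|\le|\pi|\max(1,|x|)$, so $|W^x_{0,\Gamma_1}|\le\Gamma_1\max(1,|x|)$ and $|W^x_{\Gamma_{K_n},n}|\le(n-\Gamma_{K_n})\max(1,|x|)$. As $\Gamma_1<\infty$ a.s., the first term divided by $n$ tends to $0$; for the second, $\Gamma_m/m\to\E\Delta=1/\gamma$ (Lemma \ref{indlem}(i)) gives $K_n/n\to\gamma$ and $\Gamma_{K_n}/n,\ \Gamma_{K_n+1}/n\to1$, and since $\Gamma_{K_n}\le n<\Gamma_{K_n+1}$ this forces $(n-\Gamma_{K_n})/n\to0$ a.s.

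For the main term, Lemma \ref{indlem} gives that the blocks $G_p^{(k)}$, $k\ge 2$, are i.i.d., and $W^x_{\Gamma_{k-1},\Gamma_k}$ is a deterministic functional of $G_p^{(k)}$, with common law that of $W^x_{\Gamma_1,\Gamma_2}$; this is integrable, $\E|W^x_{\Gamma_1,\Gamma_2}|\le\max(1,|x|)\,\E\Delta=\max(1,|x|)/\gamma<\infty$. By the strong law of large numbers $\frac1{K_n-1}\sum_{k=2}^{K_n}W^x_{\Gamma_{k-1},\Gamma_k}\to\E[W^x_{\Gamma_1,\Gamma_2}]$ a.s., and multiplying by $(K_n-1)/n\to\gamma$ and combining with the previous paragraph yields $W^x_{0,n}/n\to\gamma\,\E[W^x_{\Gamma_1,\Gamma_2}]$ a.s. Since by Theorem \ref{thmx} the left-hand side converges a.s.\ to the deterministic constant $C_p(x)$, the first equality follows. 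The identity $\E[W^x_{\Gamma_1,\Gamma_2}]=\E[W^x_{\Gamma_0,\Gamma_1}\mid\Gamma_0=0]$ is immediate from Lemma \ref{indlem}(ii), which identifies the common block law with the conditional law of $G_{p,\Gamma_0,\Gamma_1}$ given $\Gamma_0=0$; and since $W^x_{\Gamma_0,\Gamma_1}$ is a deterministic functional of $G_{p,\Gamma_0,\Gamma_1}$ with finite expectation, Proposition \ref{phiH} applied with $\phi=W^x$ gives $\E[W^x_{\Gamma_0,\Gamma_1}\mid\Gamma_0=0]=\sum_{n\ge1}\E[W^x_{0,n};H_{0,n}]$, the last equality.

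There is no deep obstacle here; the one point requiring care is the decomposition step. One must verify that Lemma \ref{maxpathskel}, valid precisely because $x<2$, makes the block maxima add with no loss, and that the two end blocks are controlled uniformly in $n$ rather than merely for fixed endpoints — both made routine by the crude bound $|W^x(\pi)|\le|\pi|\max(1,|x|)$, which also supplies the integrability needed for the strong law.
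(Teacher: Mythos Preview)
Your proof is correct and follows essentially the same route as the paper: use Lemma \ref{maxpathskel} to break $W^x_{0,n}$ exactly over the skeleton points, invoke the regenerative structure (Lemma \ref{indlem}) and the strong law of large numbers for the renewal--reward limit, then apply Proposition \ref{phiH} for the final equality. The paper merely sketches these steps in the paragraph preceding the proposition; you have spelled out the boundary control and the integrability check that the paper leaves implicit.
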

The latter equality is due to Proposition \ref{phiH}.

\begin{corollary}
\[
\lim_{x \to -\infty} C_p(x) = C_p(-\infty).
\]
\end{corollary}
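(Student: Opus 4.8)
The plan is to use the skeleton representation of Proposition~\ref{Csum} together with a dominated-convergence argument. Write $w^x(\pi) = b_\pi + x\,r_\pi$ for $\pi\in\Pi_{0,n}$, where $b_\pi$ and $r_\pi$ are the numbers of blue and red edges of $\pi$. Each $x\mapsto w^x(\pi)$ is affine and non-decreasing, hence so is $x\mapsto W^x_{0,n}$ and hence so is $x\mapsto C_p(x)$; in particular $\lim_{x\to-\infty}C_p(x)$ exists and lies in $[C_p(-\infty),C_p(0)]$, a bounded interval since $C_p(x)\ge C_p(-\infty)$ (noted after Theorem~\ref{thmx}) and $W^0_{0,n}\le n$ gives $C_p(0)\le 1$. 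It remains to show the limit does not exceed $C_p(-\infty)$.

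Fix $x\le 0$. By Proposition~\ref{Csum}, $C_p(x)=\gamma\sum_{n\ge 1}\E[W^x_{0,n};H_{0,n}]$. On $H_{0,n}$ there is a blue path from $0$ to $n$, so $W^{-\infty}_{0,n}$ is a well-defined integer in $[1,n]$; furthermore any $\pi\in\Pi_{0,n}$ with $r_\pi\ge 1$ satisfies $w^x(\pi)\le b_\pi + x\le n+x$, which is strictly below $W^{-\infty}_{0,n}$ once $x<W^{-\infty}_{0,n}-n$. Hence on $H_{0,n}$ the optimal path becomes entirely blue for all sufficiently negative $x$, so $W^x_{0,n}=W^{-\infty}_{0,n}$ there; and in any case $W^{-\infty}_{0,n}\le W^x_{0,n}\le W^0_{0,n}\le n$ on $H_{0,n}$, with $W^x_{0,n}$ non-increasing as $x$ decreases. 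Monotone convergence gives, for each $n$, $\E[W^x_{0,n};H_{0,n}]\downarrow\E[W^{-\infty}_{0,n};H_{0,n}]$ as $x\to-\infty$.

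To pass the limit through the series, observe that for $x\le 0$ the $n$th term is dominated by $\E[W^0_{0,n};H_{0,n}]$ and $\sum_n\E[W^0_{0,n};H_{0,n}]=C_p(0)/\gamma<\infty$ by Proposition~\ref{Csum} at $x=0$. Dominated convergence for series then yields
\[
\lim_{x\to-\infty}C_p(x)=\gamma\sum_{n\ge 1}\E[W^{-\infty}_{0,n};H_{0,n}].
\]
Now apply Proposition~\ref{phiH} with $\phi(G_{p,\Gamma_0,\Gamma_1})=W^{-\infty}_{\Gamma_0,\Gamma_1}$: on $\{\Gamma_0=0\}$ both endpoints are skeleton points, so $W^{-\infty}_{0,\Gamma_1}$ is a finite functional of $G_{p,0,\Gamma_1}$ bounded by $\Gamma_1$, whose conditional expectation is $\E\Delta=1/\gamma<\infty$; the displayed sum therefore equals $\gamma\,\E[W^{-\infty}_{\Gamma_0,\Gamma_1}\mid\Gamma_0=0]$. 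Finally, this equals $C_p(-\infty)$ by the renewal/strong-law argument behind Proposition~\ref{Csum} run at $x=-\infty$: the proof of Lemma~\ref{maxpathskel} applies verbatim (replacing an edge of a maximal blue path by a blue detour of length $\ge 2$ through a skipped skeleton point strictly lengthens it), so once $n>\Gamma_1$ — when a blue path from $0$ to $n$ exists because $\Gamma_1\in\mathcal S$ forces $0\leadsto k$ for all $k>\Gamma_1$ — the maximal blue path from $0$ to $n$ splits additively over the skeleton points it contains, and dividing by $n$ and letting $n\to\infty$ via Lemma~\ref{indlem} gives $(W^{-\infty}_{0,n})^+/n\to\gamma\,\E[W^{-\infty}_{\Gamma_0,\Gamma_1}\mid\Gamma_0=0]=C_p(-\infty)$.

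The routine parts are the two interchanges of limits, made legitimate by the structural observation that on $H_{0,n}$ a negative enough $x$ forces the optimal path to be blue. The one point requiring care is the last identification $\gamma\,\E[W^{-\infty}_{\Gamma_0,\Gamma_1}\mid\Gamma_0=0]=C_p(-\infty)$: one must verify that Lemma~\ref{maxpathskel} and the additive skeleton decomposition underlying Proposition~\ref{Csum} survive the passage $x\to-\infty$, where a ``maximal path'' is a longest blue path and $W^{-\infty}_{0,n}$ can be $-\infty$ for small $n$.
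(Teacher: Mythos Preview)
Your proof is correct and follows essentially the same approach the paper has in mind: the corollary is placed immediately after Proposition~\ref{Csum} with no explicit proof, the intended argument being precisely to pass $x\to-\infty$ in the skeleton representation $C_p(x)=\gamma\,\E[W^x_{\Gamma_1,\Gamma_2}]$ by monotone/dominated convergence, together with the identification $C_p(-\infty)=\gamma\,\E[W^{-\infty}_{\Gamma_1,\Gamma_2}]$ (which is the content of \cite{FK03} and is what underlies Proposition~\ref{Csum} itself). Your write-up is in fact more careful than the paper, since you explicitly check that Lemma~\ref{maxpathskel} and the additive decomposition extend to $x=-\infty$; the paper leaves this implicit by citing \cite{FK03,FK18}.
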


\begin{proposition}
The function $C_p(x)$ is convex over $x \in \R$.
\end{proposition}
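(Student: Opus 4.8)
The plan is to reduce convexity of $C_p(x)$ to the elementary fact that, for a fixed realization of the edge colours, the weight of a fixed path is an \emph{affine} function of $x$. For a path $\pi=(i_0,\dots,i_\ell)\in\Pi_{0,n}$ let $b(\pi):=\sum_{k=1}^{\ell}\alpha_{i_{k-1},i_k}$ denote the number of its blue edges and $r(\pi):=\ell-b(\pi)$ the number of its red edges; then by \eqref{xweight}--\eqref{wpath},
\[
w^x(\pi)=b(\pi)+x\,r(\pi),
\]
which is affine in $x$. Hence, for every $\omega\in\Omega$ and every $n$, the map
\[
x\mapsto W^x_{0,n}=\max_{\pi\in\Pi_{0,n}}\big(b(\pi)+x\,r(\pi)\big)
\]
is a pointwise maximum of the finitely many (namely $2^{\,n-1}$) affine functions $\{b(\pi)+x\,r(\pi):\pi\in\Pi_{0,n}\}$, and therefore a finite, convex, piecewise-linear function of $x\in\R$. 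The same is true of $W^x_{0,n}/n$.

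I would then pass to the limit using the $L^1$-convergence from Theorem \ref{thmx}: for each $x\in\R$ we have $C_p(x)=\lim_{n\to\infty}\E[W^x_{0,n}]/n$. Here $W^x_{0,n}$ is bounded, hence integrable, since $\min(1,x)\le W^x_{0,n}\le n\max(1,x)$ --- the lower bound is the weight of the one-edge path $(0,n)$, and the upper bound holds because any path from $0$ to $n$ has at most $n$ edges, each of weight at most $\max(1,x)$. Because $x\mapsto W^x_{0,n}(\omega)$ is convex for every $\omega$, taking expectations preserves the convexity inequality: for $x=\lambda x_1+(1-\lambda)x_2$ with $\lambda\in[0,1]$,
\[
\E[W^x_{0,n}]\le\lambda\,\E[W^{x_1}_{0,n}]+(1-\lambda)\,\E[W^{x_2}_{0,n}].
\]
Dividing by $n$ and letting $n\to\infty$ yields $C_p(x)\le\lambda C_p(x_1)+(1-\lambda)C_p(x_2)$ for all $x_1,x_2\in\R$ and $\lambda\in[0,1]$, which is precisely convexity of $C_p$ on $\R$. (One could instead argue almost surely, passing to the a.s.\ limit of the convex functions $W^{\cdot}_{0,n}/n$ along a countable dense set of $x$'s and of triples $(x_1,x_2,\lambda)$; but then one must separately justify that the resulting function, known to satisfy the convexity inequality only on a dense set, is convex on all of $\R$ --- essentially a continuity argument --- whereas the expectation route avoids this bookkeeping.)

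I do not expect a genuine obstacle here: the substance is just that inserting the parameter $x$ into the edge weights via \eqref{xweight} is an affine operation, so convexity is structural and is inherited by the pointwise maximum $W^x_{0,n}$ and then by the limit. The only points needing a line of care are (i) finiteness and integrability of $W^x_{0,n}$, so that normalising by $n$ and passing to the limit are legitimate, and (ii) the standard fact that a pointwise limit of finite convex functions on $\R$ is convex; both are covered by the two-sided bound $\min(1,x)\le W^x_{0,n}\le n\max(1,x)$ noted above.
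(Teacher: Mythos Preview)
Your proof is correct and follows essentially the same route as the paper: observe that $x\mapsto W^x_{0,n}$ is a maximum of affine functions hence convex, take expectations, and pass to the limit. The only cosmetic difference is that the paper invokes $C_p(x)=\sup_n \E W^x_{0,n}/n$ (from superadditivity in Kingman's theorem) and concludes via ``supremum of convex functions is convex'', whereas you use the limit and ``pointwise limit of convex functions is convex''; both are valid and equivalent here.
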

\begin{proof}
By Theorem \ref{thmx}, $C_p(x) = \lim_{n \to \infty} \E W^x_{0,n}/n
= \sup_n \E W^x_{0,n}/n$. By \eqref{xweight}, \eqref{wpath} and \eqref{Wxij},
the function $x \mapsto W^x_{0,n}$ is a.s.\ the supremum of affine functions
and hence convex. Therefore $x \mapsto \E W^x_{0,n}/n$ is convex
and so $x \mapsto C_p(x)$ is convex, being the supremum of convex functions.
\end{proof}

\begin{corollary}
\[
\lim_{x \to \infty} \frac{C_p(x)}{x} = C_{1-p}(0).
\]
\end{corollary}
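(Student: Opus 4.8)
The plan is to reduce the statement to the continuity of $C_q$ at the origin via the scaling property of Proposition \ref{scaling1}. First I would note that for every $x>0$ that proposition gives $C_p(x) = x\, C_q(1/x)$ with $q=1-p$, and hence
\[
\frac{C_p(x)}{x} = C_q(1/x).
\]
As $x\to\infty$ the argument $1/x$ decreases to $0$, so the right-hand side converges to $\lim_{y\downarrow 0} C_q(y)$, provided this one-sided limit exists.

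Second, I would invoke the convexity of $x\mapsto C_q(x)$ on $\R$, established in the preceding proposition. A finite convex function on $\R$ is continuous on $\R$; in particular $C_q$ is continuous at $0$, so $\lim_{y\downarrow 0} C_q(y) = C_q(0)$. Combining the two facts,
\[
\lim_{x\to\infty} \frac{C_p(x)}{x} = \lim_{y\downarrow 0} C_q(y) = C_q(0) = C_{1-p}(0),
\]
which is the claim.

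There is essentially no obstacle here: the only inputs are the scaling identity of Proposition \ref{scaling1} and the already-established convexity (hence continuity) of $C_q$ on the real line. If one preferred to avoid quoting convexity, one could instead argue directly from the renewal representation of Proposition \ref{Csum}, namely $C_q(y) = \gamma\sum_{n=1}^\infty \E[W^y_{0,n}; H_{0,n}]$ for $y<2$: for $y\in[0,1]$ all edge weights lie in $[0,1]$, so $0 \le W^y_{0,n} \le n$ on $H_{0,n}$, while $\sum_n n\,\P(H_{0,n}) = \sum_n n\,\P(\Delta=n) = \E\Delta = 1/\gamma < \infty$ by Proposition \ref{deltaH} and Lemma \ref{indlem}, and $W^y_{0,n}\to W^0_{0,n}$ as $y\downarrow 0$ pointwise; dominated convergence then yields $C_q(y)\to C_q(0)$. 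The convexity argument is shorter, so I would present that one.
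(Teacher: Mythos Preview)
Your proof is correct and follows essentially the same approach as the paper: invoke the scaling identity $C_p(x)/x = C_q(1/x)$ from Proposition~\ref{scaling1}, then use that convexity of $C_q$ on $\R$ implies continuity at $0$. The alternative dominated-convergence route you sketch is also valid but, as you note, unnecessary given convexity.
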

\begin{proof}
Convexity implies continuity.
The result then follows from the scaling property and continuity at $0$.
\end{proof}

Since $C_p(x)$ is convex, left and right derivatives exist.
Letting $D^-, D^+$ denote left and right differentiation, respectively, we
have 
\[
D^{\pm} C_p(x) = \gamma \E D^{\pm} W^x_{\Gamma_1, \Gamma_2},
\]
due to the dominated convergence theorem that is easily justifiable.
See \eqref{justdct} below.


Our goal is to identify all  points at which the left and right derivatives 
of $C_p(\cdot)$ differ. For $\pi \in \Pi_{\Gamma_1, \Gamma_2}$ let
us write its weight as 
\[
w^x(\pi) 
= N_{G_p}(\pi) + x \overline N_{G_p}(\pi),
\]
where $N_{G_p}(\pi)$ is the number of blue edges of $\pi$ (that is,
the number of edges of $\pi$ that are also edges in $G_p$),
and $\overline N_{G_p}(\pi)$ the number of red edges (the number of edges of $\pi$ that are not
edges in $G_p$). 
Then consider 
\[
\Pi^{*,x}_{\Gamma_1,\Gamma_2} =\{\pi \in \Pi_{\Gamma_1,\Gamma_2}:\,
w^x(\pi) = W^x_{\Gamma_1,\Gamma_2}\},
\]
the set of paths with maximal weight. Then
\[
D^+ W^x_{\Gamma_1, \Gamma_2} 
= \max_{\pi  \in\Pi^{*,x}_{\Gamma_1,\Gamma_2}} \overline N_{G_p}(\pi)
\qquad 
D^- W^x_{\Gamma_1, \Gamma_2} 
= \min_{\pi  \in\Pi^{*,x}_{\Gamma_1,\Gamma_2}} \overline N_{G_p}(\pi).
\]
This is rather trivial: all we are saying is that if the function $\phi$ is the maximum
of affine functions, say, 
$\phi(x)=\max_j (a_j+b_j x)$, then its right (respectively, left) derivative at $x$
equals the maximum (respectively, minimum) of all $b_j$ 
such that  $a_j+b_jx = \phi(x)$.
The only thing we did is to translate this
obvious fact in our notation. Since
\begin{equation}
\label{justdct}
|D^\pm W^x_{\Gamma_1,\Gamma_2}| \le \Gamma_2-\Gamma_1,
\end{equation}
and since $\E (\Gamma_2-\Gamma_1)=1/\gamma<\infty$,
the dominated convergence theorem applies and so
\[
D^+C_p(x)= 
  \gamma \E D^+ W^x_{\Gamma_1,\Gamma_2}
=\gamma \E   \max_{\pi  \in\Pi^{*,x}_{\Gamma_1,\Gamma_2}} \overline N_{G_p}(\pi)
\]
Similarly,
\[
D^-C_p(x) =  \gamma \E D^- W^x_{\Gamma_1,\Gamma_2}
=\gamma \E   \min_{\pi  \in\Pi^{*,x}_{\Gamma_1,\Gamma_2}} \overline N_{G_p}(\pi).
\]



As a consequence of the above we obtain the auxiliary result:
\begin{lemma}
\label{irrdiff}
If $x$ is irrational then $C_p$ is differentiable at $x$.
\end{lemma}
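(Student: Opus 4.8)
The plan is to reduce differentiability of $C_p$ at $x$ to the identity $D^+C_p(x)=D^-C_p(x)$ (possible since $C_p$ is convex) and then exploit the representations of the one-sided derivatives obtained just above. For $x<2$ these read $D^{+}C_p(x)=\gamma\,\E\max_{\pi\in\Pi^{*,x}_{\Gamma_1,\Gamma_2}}\overline N_{G_p}(\pi)$ and $D^{-}C_p(x)=\gamma\,\E\min_{\pi\in\Pi^{*,x}_{\Gamma_1,\Gamma_2}}\overline N_{G_p}(\pi)$, where the extrema are over the random, finite, nonempty set of maximal-weight paths in $\Pi_{\Gamma_1,\Gamma_2}$. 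It therefore suffices to prove that, almost surely, every path in $\Pi^{*,x}_{\Gamma_1,\Gamma_2}$ has the same number of red edges, so that the maximum and the minimum coincide.

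The heart of the matter is an elementary rational-independence argument. Take any two paths $\pi,\pi'\in\Pi_{\Gamma_1,\Gamma_2}$ and write their weights as $w^x(\pi)=N_{G_p}(\pi)+x\,\overline N_{G_p}(\pi)$ and $w^x(\pi')=N_{G_p}(\pi')+x\,\overline N_{G_p}(\pi')$. If $w^x(\pi)=w^x(\pi')$ then
\[
\big(N_{G_p}(\pi)-N_{G_p}(\pi')\big)+x\big(\overline N_{G_p}(\pi)-\overline N_{G_p}(\pi')\big)=0,
\]
and since both differences are integers while $x$ is irrational, the coefficient of $x$ must vanish; hence $\overline N_{G_p}(\pi)=\overline N_{G_p}(\pi')$ (and also $N_{G_p}(\pi)=N_{G_p}(\pi')$). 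Applying this to any two members of $\Pi^{*,x}_{\Gamma_1,\Gamma_2}$, all of which share the weight $W^x_{\Gamma_1,\Gamma_2}$, shows that $\overline N_{G_p}$ is constant on that set. Consequently $D^{+}W^x_{\Gamma_1,\Gamma_2}=D^{-}W^x_{\Gamma_1,\Gamma_2}$ almost surely, and the bound \eqref{justdct} together with $\E(\Gamma_2-\Gamma_1)=1/\gamma<\infty$ lets us take expectations to conclude $D^{+}C_p(x)=D^{-}C_p(x)$ for every irrational $x<2$.

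To cover an irrational $x\ge 2$ I would use the scaling property $C_p(x)=x\,C_q(1/x)$ of Proposition \ref{scaling1}: then $1/x$ is irrational and $1/x\le 1/2<2$, so by the case already treated $C_q$ is differentiable at $1/x$; since $y\mapsto 1/y$ is smooth on $(0,\infty)$ and multiplication by $x$ is smooth, the right-hand side is differentiable at $x$, hence so is $C_p$. I do not expect a genuine obstacle here; the only points needing care are the interchange of expectation and one-sided differentiation (already justified in the excerpt via \eqref{justdct}) and remembering to treat the range $x\ge 2$ separately through scaling, since the formula $C_p(x)=\gamma\,\E W^x_{\Gamma_1,\Gamma_2}$ and Lemma \ref{maxpathskel} were only established for $x<2$.
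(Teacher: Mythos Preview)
Your proof is correct and follows essentially the same route as the paper: both reduce to the rational-independence observation that two paths of equal $w^x$-weight with $x$ irrational must have the same number of red edges, hence $D^+W^x_{\Gamma_1,\Gamma_2}=D^-W^x_{\Gamma_1,\Gamma_2}$, and then pass to $C_p$ via the expectation formulas and the scaling property for the remaining range of $x$. The only cosmetic difference is that the paper invokes scaling first to reduce to $x<1$, whereas you work directly on $x<2$ and apply scaling afterward for $x\ge 2$.
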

\begin{proof}
By the scaling property, it suffices to show the claim for $x<1$.
Consider the expression for $C_p(x)$ from Proposition \ref{Csum}.
The set of points at which $x \mapsto W^x_{\Gamma_1,\Gamma_2}$ fails to be differentiable
is included in the set of points $x$ for which there are two paths $\pi_1, \pi_2$
from $\Gamma_1$ to $\Gamma_2$
such that $w^x(\pi_1)=w^x(\pi_2)$ with $\overline N_{G_p}(\pi_1) \neq \overline N_{G_p}(\pi_2)$.
This implies that $(\overline N_{G_p}(\pi_2)-\overline N_{G_p}(\pi_1)) x =  N_{G_p}(\pi_1)- N_{G_p}(\pi_2)$,
that is, that $x$ is rational. 
Hence  the left and right derivatives of 
$x \mapsto W^x_{\Gamma_1,\Gamma_2}$ coincide at irrational points.
Hence the left and right derivatives of 
$x \mapsto C_p(x)$ coincide at irrational points.
\end{proof}

To precisely identify the points of nondifferentiability we define
the notion of criticality.

\section{Criticality and nondifferentiability}

By directed graph $G$ on $[0,n]=\{0,1,\ldots,n\}$ we mean any graph with edge
directions compatible with the natural integer ordering.
Let $\fG_n$ be the set of all directed graphs on $[0,n]$.
\begin{definition}
\label{hndef}
Let $\fH_n$ be the set of  all directed graphs $G \in \fG_n$ such that
for all $j \in [1,n-1]$,
\\[1mm]
1\o) there is a path in $G$ from $0$ to $n$ containing $j$;
\\
2\o) for some  $i\neq j$ there is no path in $G$ from $\min(i,j)$ to $\max(i,j)$.
\end{definition}

\begin{remark}
The set $\fH_n$ is nonempty for all  positive integers 
$n \neq 2$ but $\fH_2=\varnothing$.
For $n \ge 3$, every $G \in \fH_n$ contains the edges $(0,1)$ and $(n-1,n)$.
\end{remark}

\begin{remark}
For the Barak-Erd\H{o}s random directed graph $G_p$, let 
$\{G_{p,0,n} \in \fH_n\} \subset \Omega$ be the event such that
$G_p$, restricted on $[0,n]$, is in the class $\fH_n$. Then
\[
\{G_{p,0,n} \in \fH_n\} = H_{0,n},
\]
where $H_{0,n}$
is the event appearing in Proposition \ref{deltaH}.
\end{remark}

For $\pi \in \Pi_{0,n}$ and $G \in \fG_n$ we let  $N_G(\pi)$ be the number of
edges of $\pi$ that are also edges of $G$, and $\overline N_G(\pi)$ be the number of
edges of $\pi$ that are not edges of $G$.

\begin{definition}
\label{defcrit}
We say that $x \in \R$ is {\bf\em critical} if there is a positive integer $n$
and and a graph $G \in \fH_n$ possessing two paths $\pi_1, \pi_2$
such that
\\[1mm]
1\o) $N_G(\pi_1) + x \overline N_G(\pi_1) =
N_G(\pi_2) + x \overline N_G(\pi_2) 
= \max_{\pi \in \Pi_{0,n}} (N_G(\pi) + x \overline N_G(\pi))$
\\
2\o) 
$\overline N_G(\pi_1)  \neq \overline N_G(\pi_2)$.
\end{definition}

\begin{remark}
\label{x1x}
Note that, for $x>0$, if $x$ is critical then $1/x$ is also critical
because, in the definition of criticality, 
we can replace $G$ by the graph $\overline G$ whose edges
are the nonedges of $G$.\footnote{A {\emph nonedge} $(i,j)$ 
of $G$, where $i,j$ are vertices of $G$, means that
$(i,j)$ is not an edge of $G$.}
\end{remark}
\begin{remark}
If $x\neq 0$ is critical then 2\o) of Def.\ \ref{defcrit}
can be replaced by $N_G(\pi_1) \neq N_G(\pi_2)$.
\end{remark}
\begin{remark}
If $x$ is critical then the $n$ of  Def.\ \ref{defcrit} can be taken to be at least 3.
\end{remark}

\begin{theorem}
\label{thmdiffcrit}
$C_p(\cdot)$ is not differentiable at $x$ if and only of $x$ is critical.
\end{theorem}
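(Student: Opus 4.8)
The plan is to relate the global two-sided derivative question for $C_p$ to a purely finite, combinatorial statement about graphs in $\fH_n$, using the renewal representation $C_p(x)=\gamma\sum_{n\ge1}\E[W^x_{0,n};H_{0,n}]$ from Proposition \ref{Csum} together with the side-derivative formulas $D^{\pm}C_p(x)=\gamma\,\E\,\min/\max_{\pi\in\Pi^{*,x}_{\Gamma_1,\Gamma_2}}\overline N_{G_p}(\pi)$ already derived. Because $D^-C_p(x)\le D^+C_p(x)$ always (convexity), non-differentiability at $x$ is equivalent to $\E\big[D^+W^x_{\Gamma_1,\Gamma_2}-D^-W^x_{\Gamma_1,\Gamma_2}\big]>0$, i.e.\ to the event of positive probability that the maximizing set $\Pi^{*,x}_{\Gamma_1,\Gamma_2}$ contains two paths with different numbers of red edges. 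Conditioning on $\{\Gamma_0=0\}$ and using Lemma \ref{indlem}, this event has positive probability if and only if there exists some $n\ge1$ with $\P(H_{0,n},\ \text{the restriction }G_{p,0,n}\text{ has two weight-maximal paths with different red-counts})>0$. Since $G_{p,0,n}$ takes every value in $\fG_n$ with positive probability, and $\{G_{p,0,n}\in\fH_n\}=H_{0,n}$ by the remark after Definition \ref{hndef}, this is precisely the assertion that some $G\in\fH_n$ admits two paths $\pi_1,\pi_2\in\Pi_{0,n}$ that are both $w^x$-maximal with $\overline N_G(\pi_1)\ne\overline N_G(\pi_2)$ — that is, that $x$ is critical in the sense of Definition \ref{defcrit}.

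Concretely I would organize the argument in two implications. \emph{Critical $\Rightarrow$ not differentiable.} Suppose $x$ is critical, witnessed by $n$, $G\in\fH_n$, $\pi_1,\pi_2$. By the scaling property (Proposition \ref{scaling1}) and Remark \ref{x1x} I may assume $x<2$, so Lemma \ref{maxpathskel} applies and maximal paths respect skeleton points. On the event $A:=\{\Gamma_0=0,\ \Gamma_1=n\}\cap\{G_{p,0,n}=G\}$, which has positive probability, one has $\Pi^{*,x}_{\Gamma_1,\Gamma_2}=\Pi^{*,x}_{0,n}$ (the skeleton-to-skeleton segment is exactly $[0,n]$), and this set contains $\pi_1,\pi_2$, so $D^+W^x_{\Gamma_1,\Gamma_2}-D^-W^x_{\Gamma_1,\Gamma_2}\ge|\overline N_G(\pi_1)-\overline N_G(\pi_2)|\ge1$ on $A$; everywhere else the difference is $\ge0$. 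Hence $D^+C_p(x)-D^-C_p(x)\ge\gamma\,\P(A)>0$. \emph{Not differentiable $\Rightarrow$ critical.} If $x$ is not differentiable, then again (using scaling to reduce to $x<2$) $0<D^+C_p(x)-D^-C_p(x)=\gamma\,\E\big[\max_{\pi\in\Pi^{*,x}_{\Gamma_1,\Gamma_2}}\overline N_{G_p}(\pi)-\min_{\pi\in\Pi^{*,x}_{\Gamma_1,\Gamma_2}}\overline N_{G_p}(\pi)\big]$, so with positive probability the bracketed quantity is $\ge1$, i.e.\ there are two $w^x$-maximal paths from $\Gamma_1$ to $\Gamma_2$ with distinct red-counts. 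Condition on $\{\Gamma_0=0\}$ and use Proposition \ref{phiH} (with $\phi(G_{p,\Gamma_0,\Gamma_1})=\mathbf 1[\exists\ \pi_1,\pi_2\ w^x\text{-maximal with }\overline N(\pi_1)\ne\overline N(\pi_2)]$): the expectation is $\sum_n\E[\phi(G_{p,0,n});H_{0,n}]>0$, so for some $n$ there is a graph $G$ with $\P(G_{p,0,n}=G)>0$, hence $G\in\fH_n$ (as $\{G_{p,0,n}\in\fH_n\}=H_{0,n}$), on which two such paths exist. That $G$ and those two paths witness criticality of $x$ via Definition \ref{defcrit}, since $w^x(\pi)=N_G(\pi)+x\overline N_G(\pi)$ and $W^x_{0,n}=\max_{\pi\in\Pi_{0,n}}w^x(\pi)$.

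The one point requiring care — and the main (mild) obstacle — is the reduction to $x<2$ and the matching of $\Pi^{*,x}_{\Gamma_1,\Gamma_2}$ with $\Pi^{*,x}_{0,n}$ on the conditioning event. For $x<2$ this is clean: Lemma \ref{maxpathskel} guarantees every maximal $[\,0,n\,]$-path passes through all skeleton points in between, and on $\{\Gamma_0=0,\Gamma_1=n\}$ there are none strictly between $0$ and $n$, so the segment $[0,n]$ behaves as a single inter-skeleton block and the maximal-path structure of $G_{p,0,n}$ is exactly what enters the side-derivative formula. For $x\ge2$ Lemma \ref{maxpathskel} fails, but the scaling identity $C_p(x)=xC_q(1/x)$ reduces differentiability of $C_p$ at $x>1$ to differentiability of $C_q$ at $1/x\in(0,1)$, and Remark \ref{x1x} shows criticality is preserved under $x\mapsto1/x$; the remaining values $x\in[1,2)$ are already covered directly, and differentiability at $x=1$ (which the statement excludes from the bad set) should be checked separately — at $x=1$ all edge weights equal $1$ so $W^1_{0,n}=n$ deterministically and $C_p(1)=1$ with both side-derivatives equal, hence $1$ is not critical and $C_p$ is differentiable there. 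Assembling these cases gives the equivalence for all real $x$.
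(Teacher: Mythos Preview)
Your approach is essentially the paper's: reduce to $x<2$ via the scaling relation and Remark \ref{x1x}, then use the representation $D^{+}C_p(x)-D^{-}C_p(x)=\gamma\sum_{n\ge1}\E[D^+W^x_{0,n}-D^-W^x_{0,n};H_{0,n}]$, bounding a single term from below by restricting to $\{G_{p,0,n}=G\}$ in the forward direction and extracting a realization $\omega_0\in H_{0,n}$ in the reverse.

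There is one notational slip you should repair. The side-derivative formula you quote is for $W^x_{\Gamma_1,\Gamma_2}$, but your conditioning event $A=\{\Gamma_0=0,\ \Gamma_1=n\}\cap\{G_{p,0,n}=G\}$ fixes the segment $[\Gamma_0,\Gamma_1]$, not $[\Gamma_1,\Gamma_2]$, so as written the inequality $D^+C_p(x)-D^-C_p(x)\ge\gamma\,\P(A)$ does not follow. The fix is immediate: either use the conditional form $D^{\pm}C_p(x)=\gamma\,\E[D^{\pm}W^x_{\Gamma_0,\Gamma_1}\mid\Gamma_0=0]$ coming from Proposition \ref{Csum}, or, more cleanly (and as the paper does), work directly with the $H_{0,n}$-sum and the inclusion $\{G_{p,0,n}=G\}\subset H_{0,n}$, avoiding any reference to $\Gamma_0,\Gamma_1,\Gamma_2$ in the body of the argument.
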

\begin{proof}
It suffices to prove the statement for $x< 2$.
By Proposition \ref{Csum} and the dominated convergence theorem,
\[
D^{+} C_p(x) -D^{-} C_p(x)  
= \gamma \sum_{n=1}^\infty
\E \left [D^+ W^x_{0,n}-D^- W^x_{0,n}; H_{0,n} \right].
\]
Suppose that $x$ is critical. Let $n \ge 3$ and $G \in \fH_n$ be as in the definition
of criticality.
Since $H_{0,n} = \{G_p \in \fH_n\} \supset \{G_p = G\}$, we have
\[
D^{+} C_p(x) -D^{-} C_p(x)   \ge \gamma 
\E\left [D^+ W^x_{0,n}-D^- W^x_{0,n}; G_p=G\right].
\]
The event $\{G_p=G\}$ is simply the event that for all edges $(i,j)$ of $G$
we have $\alpha_{i,j}=1$, whereas for all nonedges $(i,j)$ we have $\alpha_{i,j}=0$.
Obviously, on this event,
$w^x(\pi) = N_{G_p}(\pi) +x \overline N_{G_p}(\pi)= N_G(\pi) +x \overline N_G(\pi)$ for all $\pi \in \Pi_{0,n}$.
Since 
$W^x_{0,n} = \max_{\pi \in \Pi_{0,n}} (N_{G_p}(\pi)+x \overline N_{G_p}(\pi))$,
we have
\begin{equation}
\label{dd}
D^+ W^x_{0,n} - D^- W^x_{0,n} 
= \max \overline N_G(\pi) - \min \overline N_G(\pi), \quad\text{ on } \{G_p=G\},
\end{equation}
where both the $\max$ and the $\min$ are taken
over all $\pi \in \Pi_{0,n}$ such that 
$N_G(\pi) +x \overline N_G(\pi) = W^x_{0,n}$.
Let $\pi_1, \pi_2$ be as in the definition of criticality.
Then $N_G(\pi_1) + x \overline N_G(\pi_1) =
N_G(\pi_2) + x \overline N_G(\pi_2)=W^x_{0,n}$ 
and $\overline N_G(\pi_1) 
\neq \overline N_G(\pi_2)$.
Hence
\[
D^+ W^x_{0,n} - D^- W^x_{0,n}  
\ge \left|\overline N_G(\pi_1) -\overline N_G(\pi_2)\right|>0,
\quad \text{on }  \{G_p=G\}.
\]
Since $\P(G_p=G)>0$, we conclude that $D^{+} C_p(x) -D^{-} C_p(x)>0$
if $x$ is critical.
Conversely, if $C_p$ is not differentiable at $x$ then there is $n$
such that $\E \left [D^+ W^x_{0,n}-D^- W^x_{0,n}; H_{0,n} \right]>0$.
Hence $\P(D^+ W^x_{0,n}-D^- W^x_{0,n}>0 ;H_{0,n})>0$.
Then there exists $\omega_0 \in H_{0,n}$
such that $D^+ W^x_{0,n}(\omega_0)-D^- W^x_{0,n}(\omega_0)>0$.
But $\omega_0 \in H_{0,n}$ is equivalent to $G_p\equiv G_p(\omega_0) \in \fH_n$.
For this $\omega_0$, let $\widehat G(\omega_0)$ be the graph with edges precisely those
$(i,j)$ for which $\alpha_{ij}(\omega_0)=1$. Then $\widehat G(\omega_0) \in \fH_n$.
Using \eqref{dd} we obtain that the conditions of Definition \ref{defcrit}
are satisfied with $G=\widehat G(\omega_0)$.
\end{proof}




\section{Identifying critical and noncritical points}
We have reduced the problem of finding the points of nondifferentiability of $C_p(\cdot)$=-=
to the problem of {\black finding all critical points in the sense of Definition \ref{defcrit}.}
This is a graph-theoretic, completely deterministic, issue that we tackle in this section.
For $x \in \R$, $G \in \fG_n$ and $\pi \in \Pi_{0,n}$ {\black we use the
term $(x,G)$-weight for the quantity
$w^x_G(\pi) = N_G(\pi) + x \overline N_G(\pi)$;}
we say that {\black $\pi$ is $(x,G)$-maximal (or, simply, maximal) if $w^x_G(\pi)
\ge w^x_G(\pi')$ for all $\pi' \in \Pi_{0,n}$.}
Whenever no confusion arises, we omit the superscript $x$ and the subscript $G$
from the symbols above.

{\black To show that an $x$ is critical} we will proceed by giving an explicit construction of an 
appropriate graph.

%

{\black To show that $x$ is not critical }we must show, for every 
$n \ge 3$ and  every $G \in \fH_n$, that
either there is a unique $(x,G)$-maximal path
or every $(x,G)$-maximal path has the same $\overline N_G(\pi)$.

\begin{theorem}
\label{thmcrit}
The set of critical points is the union of
\\
1) nonpositive rationals;
\\
2) positive integers except $1$;
\\
3) the reciprocals of positive integers except $1$.
\end{theorem}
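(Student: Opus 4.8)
The plan is to establish the two inclusions separately. For the inclusion ``$\supseteq$'' (these points are critical), by Remark \ref{x1x} it suffices to treat nonpositive rationals and integers $k\ge 2$, since criticality of $x>0$ forces criticality of $1/x$. For a nonpositive rational $x=-a/b$ with $a\ge 0$, $b\ge 1$ coprime, I would build an explicit graph $G\in\fH_n$ for a suitable $n$ in which two competing $(x,G)$-maximal paths appear: the idea is to engineer a ``gadget'' segment between two forced skeleton-like vertices where one may either take a single blue edge (weight $1$) or a detour using some blue and some red edges whose $(x,G)$-weight ties with it; balancing $N_G+x\overline N_G$ across the two options is exactly the equation $(\overline N(\pi_1)-\overline N(\pi_2))x = N(\pi_2)-N(\pi_1)$, which has a solution at every rational $x$, and one checks the detour can be arranged with $\overline N(\pi_1)\ne\overline N(\pi_2)$. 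One then pads the construction on the left and right with blue edges $(0,1),(1,2),\dots$ and $\dots,(n-1,n)$ and inserts ``broken'' vertices to guarantee membership in $\fH_n$ (every interior vertex must lie on a blue $0\to n$ path yet witness some unreachable pair). For integers $k\ge2$ the relevant tie is between a path with a length-$k$ blue stretch and a single red edge of weight $k$; this is the cleanest case and should be done first as a warm-up.

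For the inclusion ``$\subseteq$'' (no other $x$ is critical), Lemma \ref{irrdiff} already disposes of irrational $x$, so I must show: (a) $x=1$ is not critical; (b) positive non-integer rationals that are not reciprocals of integers are not critical; equivalently, by the scaling symmetry of Remark \ref{x1x}, it suffices to rule out rationals $x=a/b$ in lowest terms with $a,b\ge 2$ (together with $x=1$). The strategy is: fix $n\ge 3$ and $G\in\fH_n$, suppose $\pi_1,\pi_2$ are $(x,G)$-maximal with $\overline N_G(\pi_1)\ne\overline N_G(\pi_2)$, and derive a contradiction. Writing $x=a/b$ and using $N_G(\pi_1)+x\overline N_G(\pi_1)=N_G(\pi_2)+x\overline N_G(\pi_2)$ gives $b\mid (\overline N_G(\pi_1)-\overline N_G(\pi_2))$, so the two paths differ in red-edge count by a multiple of $b\ge2$, hence also differ substantially in total length. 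I would then exploit membership in $\fH_n$: condition 1\o) forces the existence of a blue $0\to n$ path $\pi_0$ through every vertex, so $\overline N_G(\pi_0)=0$ and $N_G(\pi_0)=|\pi_0|$; combined with maximality of $\pi_1,\pi_2$ this pins down $W^x_{0,n}\ge |\pi_0|$ and constrains how many red edges a maximal path can afford when $x<1$ (a red edge only pays off if it ``skips'' enough blue edges, and the skip structure is limited by the broken-pair condition 2\o)). The combinatorial heart is to show these constraints are incompatible with two maximal paths whose red counts differ by $\ge b$.

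The main obstacle I anticipate is exactly this last step: the deterministic case analysis showing that for $x=a/b$ with $a,b\ge2$ (and for $x=1$) no graph in $\fH_n$ can support two maximal paths with distinct red-counts. Unlike the irrational case, the divisibility obstruction alone is not enough — one genuinely needs to use the $\fH_n$ structure (both the ``every vertex on a blue path'' condition and the ``every vertex witnesses a broken pair'' condition) to bound the geometry of maximal paths. I would expect to argue by a local-surgery/exchange argument: given two maximal paths, look at a maximal segment on which they disagree, between a common vertex $u$ and the next common vertex $v$; on $[u,v]$ each is maximal among $u\to v$ paths, and the blue sub-path structure forced by $\fH_n$ (in particular the forced edges $(0,1)$, $(n-1,n)$ and the absence of certain long blue paths) should force the red counts on that segment to agree modulo the impossibility of a nontrivial tie unless $x$ is a nonpositive rational or an integer $k\ge2$ or its reciprocal. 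Getting the bookkeeping of this exchange argument right, and handling $x=1$ (where a blue edge and a red edge have equal weight, so one must rule out that swapping colours along a cycle-like configuration inside a single $G\in\fH_n$ produces a genuine alternative), is where the real work lies.
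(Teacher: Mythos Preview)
Your proposal has the right two-inclusion skeleton but contains a genuine misreading of $\fH_n$ and is missing the key structural lemma that drives the non-criticality half.

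First, the misreading: you write that condition~1\textsuperscript{o} of Definition~\ref{hndef} ``forces the existence of a blue $0\to n$ path $\pi_0$ through every vertex, so $\overline N_G(\pi_0)=0$''. This is false. Condition~1\textsuperscript{o} guarantees, for \emph{each} interior vertex $j$, \emph{some} blue $0\to n$ path containing $j$; it does not produce a single blue Hamiltonian path. Already the graph in Lemma~\ref{0crit} (edges $(0,1),(1,3),(0,2),(2,3)$) has no blue edge $(1,2)$, hence no blue path $(0,1,2,3)$. Everything you build on the inequality $W^x_{0,n}\ge |\pi_0|$ with $\overline N_G(\pi_0)=0$ therefore collapses.

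Second, and more importantly, your non-criticality plan (the divisibility observation $b\mid(\overline N_G(\pi_1)-\overline N_G(\pi_2))$ plus a hoped-for exchange argument) is not how the paper proceeds, and it is unclear it can be completed. The paper instead proves a sharp structural fact about maximal paths when $0<x<2$ (Lemmas~\ref{shortfact}--\ref{nestfact}): every long edge of a maximal path is blue, every short blue edge is present, and blue edges of distinct maximal paths cannot be nested. From this one extracts Lemma~\ref{specialfact}: on any $(\pi_1,\pi_2)$-special interval where the red counts differ, the \emph{blue} counts differ by exactly $\pm1$. Plugging this into $N+x\overline N=\text{const}$ forces $x=1/m$ for an integer $m$ --- which is Proposition~\ref{+noncrit01}. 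Your divisibility step only gives $b\mid(\overline N_1-\overline N_2)$; the decisive point is that $|N_1-N_2|=1$ on a special interval, and that requires the nested-edge/short-edge analysis you have not outlined. (Incidentally, $x=1$ is trivial: $w^1_G(\pi)=|\pi|$, so the unique maximal path is $(0,1,\ldots,n)$; no colour-swap argument is needed.)

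On the criticality side your sketch is broadly right in spirit for $x=0$, for negative integers, and for $k\ge2$ (the paper in fact handles $1/k$ directly and gets $k$ via Remark~\ref{x1x}). But for negative \emph{non-integer} rationals the ``engineer a gadget where two options tie'' description substantially understates the difficulty: the paper's construction (Proposition~\ref{-qcrit}) places pivot vertices at spacings dictated by a $(t,t-s)$-balanced Sturmian word (Lemma~\ref{sturmlem}) and then verifies, through several claims, both that the resulting graph lies in $\fH_n$ and that no path can beat the intended maximum $2m+t$. A naive gadget will typically fail one of these two checks; the Sturmian spacing is what makes the weight bound \eqref{action} go through uniformly over all sub-intervals.
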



The theorem follows from a number of intermediate results.
We point out that Lemmas \ref{1noncrit} and \ref{0crit} are special cases of 
Propositions \ref{+noncrit01} and \ref{-ncrit}, respectively. 

\begin{lemma}
\label{1noncrit}
$1$ is not critical.
\end{lemma}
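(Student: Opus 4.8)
The plan is to show that when $x=1$, for every $n\ge 3$ and every $G\in\fH_n$, all $(1,G)$-maximal paths $\pi\in\Pi_{0,n}$ have the same value of $\overline N_G(\pi)$, so that condition 2\o) of Definition \ref{defcrit} can never be met. The key observation is that at $x=1$ the $(x,G)$-weight degenerates: $w^1_G(\pi)=N_G(\pi)+\overline N_G(\pi)=|\pi|$, the number of edges of $\pi$. Hence every path in $\Pi_{0,n}$ has $(1,G)$-weight equal to its length, and a $(1,G)$-maximal path is simply a longest path from $0$ to $n$ in the complete graph $K([0,n])$ — which is the path $(0,1,2,\ldots,n)$ of length $n$, and only that path. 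So there is a \emph{unique} $(1,G)$-maximal path, and uniqueness immediately rules out criticality.

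More carefully, the first step is to record that for any $\pi=(0=i_0<i_1<\cdots<i_\ell=n)\in\Pi_{0,n}$ we have $w^1_G(\pi)=\sum_{k=1}^\ell\big(N_G\text{-contribution}+\overline N_G\text{-contribution}\big)=\ell=|\pi|$, independently of $G$. The second step is the elementary combinatorial fact that $|\pi|\le n$ for all $\pi\in\Pi_{0,n}$, with equality if and only if $\pi=(0,1,\ldots,n)$; this is because a strictly increasing sequence of integers from $0$ to $n$ has at most $n$ steps. Therefore the set of $(1,G)$-maximal paths is the singleton $\{(0,1,\ldots,n)\}$. The third step is to invoke the remark after Definition \ref{defcrit} (or simply Definition \ref{defcrit} directly): criticality at $x$ requires \emph{two} maximal paths $\pi_1\neq\pi_2$ with $\overline N_G(\pi_1)\neq\overline N_G(\pi_2)$, which is impossible when the maximal path is unique. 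Since this holds for every $n$ and every $G\in\fH_n$ (indeed for every $G\in\fG_n$), $x=1$ is not critical.

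I do not expect any real obstacle here; the only thing to be slightly careful about is that the degeneracy argument does not use $G\in\fH_n$ at all — it works for arbitrary $G$ — which is consistent with the later claim that Lemma \ref{1noncrit} is a special case of Proposition \ref{+noncrit01}. One could alternatively phrase the whole argument as: the map $x\mapsto W^x_{0,n}$ is, on $\{G_p=G\}$, the upper envelope of the affine functions $x\mapsto N_G(\pi)+x\overline N_G(\pi)$ over $\pi\in\Pi_{0,n}$, and at $x=1$ these all pass through points lying on the single line $x\mapsto$ (length of the longest path) only for the longest path itself; but the direct length computation above is cleaner and self-contained.
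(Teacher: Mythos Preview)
Your proposal is correct and follows essentially the same argument as the paper: at $x=1$ the weight $w^1_G(\pi)$ equals the length $|\pi|$, the unique longest path in $\Pi_{0,n}$ is $(0,1,\ldots,n)$, and uniqueness of the maximal path precludes criticality. The paper's proof is just a terser version of yours.
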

\begin{proof}
For every $n \ge 3$, every path $\pi\in \Pi_{0,n}$, and every $G \in \fG_n$, we have
$w^1_G(\pi)=|\pi|$, the length of $\pi$.
The maximum of $|\pi|$ over all paths from $0$ to $n$ is obviously $n$.
Clearly, the only path with length $n$ is the path $(0,1,2,\ldots,n)$.
\end{proof}

\begin{lemma}
\label{0crit}
$0$ is critical.
\end{lemma}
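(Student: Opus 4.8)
The plan is to exhibit an explicit small graph $G \in \fH_n$ together with two $(0,G)$-maximal paths $\pi_1,\pi_2$ having different numbers of red edges. When $x = 0$, the $(x,G)$-weight of a path $\pi$ is simply $w^0_G(\pi) = N_G(\pi)$, the number of blue (i.e., present-in-$G$) edges of $\pi$. So a $(0,G)$-maximal path is one that maximizes the count of its edges that are genuine edges of $G$ — equivalently, the longest path in $G$ itself, since a path can always be ``padded'' with extra vertices joined by non-edges (which contribute weight $0$) without changing $N_G$. Thus I need a graph $G$ on $\{0,1,\dots,n\}$, lying in $\fH_n$, that has a longest (in $G$) path of some length $\ell$ realized by at least two paths $\pi_1,\pi_2$ whose numbers of non-$G$ edges (needed to complete them from $0$ to $n$ in $\Pi_{0,n}$) differ.

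First I would take the smallest workable case. Since $\fH_n$ is empty for $n=2$ and the first remark after Definition \ref{hndef} forces $n \ge 3$, I would try $n = 3$ or $n=4$. Recall $G \in \fH_3$ must contain edges $(0,1)$ and $(2,3)$, and for the vertex $j=1$ there must be a non-edge among pairs involving $1$, and similarly for $j=2$; also there must be a $G$-path from $0$ to $3$ through each of $1$ and $2$. The candidate I would check first: on $\{0,1,2,3\}$ take edges $(0,1),(1,2),(2,3)$ and also $(0,2)$ but NOT $(1,3)$ and NOT $(0,3)$. Then $G$-paths from $0$ to $3$ include $(0,1,2,3)$ (through both $1$ and $2$), so condition 1\o of Definition \ref{hndef} holds; the non-edge $(1,3)$ witnesses condition 2\o for $j=1$ and $j=2$ (taking $i=3$ and $i=1$ respectively). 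Hence $G \in \fH_3$. The longest path in $G$ is $(0,1,2,3)$ with $3$ edges, all blue, giving $N_G = 3$. But a competitor realizing $N_G = 3$ at most is hard with only $4$ vertices, so $n=3$ may be too tight; I expect to need $n=4$.

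For $n=4$ I would construct $G$ on $\{0,1,2,3,4\}$ with a ``long route'' $(0,1,2,3,4)$ all edges present (so $N_G$ up to $4$), plus a shortcut edge $(0,2)$ present but with some edge on the direct route deleted, arranged so that two different vertex-sequences each pick up $4$ blue edges while using different numbers of non-edges. Concretely: let the edges of $G$ be $(0,1),(1,2),(2,3),(3,4),(0,2),(2,4)$, with $(0,3),(0,4),(1,3),(1,4),(3,4)$-type absences chosen to keep $G \in \fH_4$ (one must verify each interior $j \in \{1,2,3\}$ lies on a $G$-path $0\leadsto 4$ and has a witnessing non-edge). Here $\pi_1 = (0,1,2,3,4)$ has $N_G = 4$, $\overline N_G = 0$; and $\pi_2 = (0,2,4)$ is a $G$-path with only $2$ blue edges — too few. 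I would instead look for $\pi_2$ of the form $(0,1,2,4,\text{something})$ — but there is no vertex above $4$. The right device: make two genuinely different longest $G$-paths of the same blue-length, e.g. $G$ containing both $(0,1,2,3,4)$ and a path like $(0,1,3,4)$ of blue-length $3$ versus... these don't match. The honest approach is: pick $G$ with a longest $G$-path $P$ of length $\ell$, and a second $G$-path $P'$ also of $G$-length $\ell$ but which, when embedded as an element of $\Pi_{0,n}$, needs at least one extra non-edge (because $P'$ skips a vertex that $P$ does not, and $\Pi_{0,n}$ paths need not be $G$-paths), while still $N_G(P') = N_G(P) = \ell$; then $\overline N_G(P') > \overline N_G(P)$.

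The step I expect to be the real obstacle is the simultaneous satisfaction of the two constraints: (a) $G \in \fH_n$, which is itself somewhat restrictive (every interior vertex must be both on a long path and ``bypassed'' somewhere), and (b) the existence of two $(0,G)$-maximal paths with distinct red-edge counts, which needs the longest $G$-path to be non-unique in a way that interacts with the padding-by-non-edges. I would resolve this by a clean direct construction on a modest number of vertices (I anticipate $n = 4$ or $n=5$ suffices) and simply verify the finitely many conditions by inspection, checking (i) each edge/non-edge is as claimed, (ii) $G \in \fH_n$ by exhibiting the required long paths and witnessing non-edges for each interior vertex, (iii) computing $\max_{\pi \in \Pi_{0,n}} N_G(\pi)$ and exhibiting $\pi_1,\pi_2$ both attaining it with $\overline N_G(\pi_1)\neq\overline N_G(\pi_2)$. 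Since the later Proposition \ref{-ncrit} (of which this lemma is announced to be a special case) handles all nonpositive rationals, the cleanest exposition is to give here only the minimal explicit witness for $x=0$ and defer the general mechanism.
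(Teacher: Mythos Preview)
Your proposal never actually delivers a working example; it tries two candidates that fail and then asserts one ``would'' be found on $n=4$ or $n=5$. That is a genuine gap: for this lemma the entire proof \emph{is} the explicit witness.

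In fact you abandoned $n=3$ too quickly. Your $n=3$ attempt included the edge $(1,2)$, which forces the unique maximizer $(0,1,2,3)$ with $N_G=3$. The paper's construction on $n=3$ works by \emph{omitting} $(1,2)$: take $\edges(G)=\{(0,1),(1,3),(0,2),(2,3)\}$. Then $(1,2)$ is red, so the full path $(0,1,2,3)$ has only $N_G=2$ (the blue edges $(0,1)$ and $(2,3)$) and one red edge, while $\pi_1=(0,1,3)$ also has $N_G=2$ but zero red edges. Both are $(0,G)$-maximal, and $\overline N_G(\pi_1)=0\neq 1=\overline N_G(\pi_2)$. The membership $G\in\fH_3$ is immediate: $(0,1,3)$ and $(0,2,3)$ are $G$-paths through $j=1,2$ respectively, and the missing edge $(1,2)$ witnesses condition~2\o\ for both interior vertices.

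The idea you were circling around---pad a shorter maximal $G$-path with a red edge to get a longer $\Pi_{0,n}$-path with the same blue count---is exactly right; you just needed to \emph{remove} a short edge from the spine rather than add shortcuts to it.
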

\begin{proof}
For any $n \ge 3$, any path $\pi \in \Pi_{0,n}$ and any graph $G \in \fG_n$,
we have
$w^0_G(\pi) =N_G(\pi)$.
Let $n=3$ and let  the edge set of the graph $G$  be\\
\begin{minipage}{0.6\textwidth}
\[
\edges(G) = \big\{(0,1), (1,3), (0,2), (2,3) \big\}
\]
\end{minipage}
\begin{minipage}{0.4\textwidth}
\includegraphics[width=4cm]{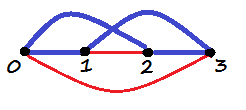}
\end{minipage}
\\
Clearly, $G \in \fH_3$.
Consider the paths in $\Pi_{0,3}$. There are just 4 paths:
 the path $(0,3)$ of length 1, the paths $(0,1,3)$ and $(0,2,3)$ of length 2,
and the path $(0,1,2,3)$ of length 3.
Considering all 4 possible paths in $G$, we easily
see that $W^0_G=2$ and this is achieved by 
$\pi_1=(0,1,3)$ and $\pi_2=(0,1,2,3)$.
Since $\overline N_G(\pi_1)=0 \neq \overline N_G(\pi_2)=1$, we conclude that
$x=0$ is critical.
\end{proof}


\begin{proposition}
\label{+crit}
For any positive integer $k \ge 2$, $k$ and $1/k$ are critical.
\end{proposition}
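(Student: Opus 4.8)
The plan is to verify Definition~\ref{defcrit} directly: for each integer $k\ge 2$ I will produce an $n$, a graph $G\in\fH_n$, and two $(k,G)$-maximal paths in $\Pi_{0,n}$ whose numbers of non-edges differ; criticality of $1/k$ then follows at once from Remark~\ref{x1x}. Take $n=k+3$ and let $G$ be the graph on $[0,k+3]$ with
\begin{align*}
\edges(G)={}&\bigl(\{(a,b):0\le a<b\le k\}\setminus\{(0,k)\}\bigr)\cup\{(a,k+1):2\le a\le k\}\\
&{}\cup\{(0,k+2),(k+1,k+3),(k+2,k+3)\},
\end{align*}
so that the non-edges of $G$ are $(0,k)$, $(0,k+1)$, $(1,k+1)$, the pairs $(a,k+2)$ with $1\le a\le k+1$, and the pairs $(a,k+3)$ with $0\le a\le k$. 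The block $[0,k]$ acts as a gadget: its only non-edge is $(0,k)$, of span $k$, so the heaviest $(k,G)$-weight from $0$ to $k$ is $k$, attained both by the all-unit path $(0,1,\dots,k)$ (no non-edges) and by the single non-edge $0\to k$ (one non-edge, of weight $x=k$). Accordingly set $\pi_1=(0,1,2,\dots,k+3)$ and $\pi_2=(0,k,k+1,k+2,k+3)$. On $\pi_1$ the only non-edge is the missing unit edge $(k+1,k+2)$, so $\overline N_G(\pi_1)=1$ and $w^k_G(\pi_1)=(k+2)+k\cdot 1=2k+2$; on $\pi_2$ the non-edges are $(0,k)$ and $(k+1,k+2)$, so $\overline N_G(\pi_2)=2$ and $w^k_G(\pi_2)=2+k\cdot 2=2k+2$. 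Since $\overline N_G(\pi_1)\ne\overline N_G(\pi_2)$, it remains to show that $2k+2$ is the maximal $(k,G)$-weight over $\Pi_{0,n}$ and that $G\in\fH_{k+3}$.

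For maximality I would note that the spans of the edges of any $\pi\in\Pi_{0,n}$ sum to $n$, which yields $w^k_G(\pi)\le n+\sum_{e}\bigl(k-\operatorname{span}(e)\bigr)$, the sum running over the non-edges $e$ of $\pi$. A non-edge contributes a positive amount only if its span is $<k$, and from the list above these are precisely the pairs $(a,k+2)$ with $3\le a\le k+1$ and $(a,k+3)$ with $4\le a\le k$; every one of them ends at the vertex $k+2$ or at the vertex $k+3$. But a path cannot contain two distinct edges both ending in $\{k+2,k+3\}$ unless one of them is $(k+2,k+3)$ (since after visiting $k+2$ a path must step to $k+3=n$), and $(k+2,k+3)\in\edges(G)$; hence $\pi$ uses at most one non-edge of span $<k$, and the largest possible contribution from it is $k-1$, from $(k+1,k+2)$. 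Therefore $w^k_G(\pi)\le n+(k-1)=2k+2$ for all $\pi$, with equality for $\pi_1$ and $\pi_2$.

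To check $G\in\fH_{k+3}$: the edges $(0,1)$ and $(k+2,k+3)$ lie in $G$; the first condition of Definition~\ref{hndef} holds because $(0,1,2,\dots,k,k+1,k+3)$ is a path in $G$ through every $j\in[1,k+1]$ and $(0,k+2,k+3)$ is one through $j=k+2$; the second condition holds because the only edge of $G$ into $k+2$ is $(0,k+2)$, so no vertex $j\ge 1$ reaches $k+2$ in $G$ (take $i=k+2$ for $j\in[1,k+1]$), while there is no edge, and hence no path, of $G$ from $k+1$ to $k+2$ (take $i=k+1$ for $j=k+2$). I expect the crux to be exactly this tension: making every interior vertex non-universal forces one to delete many edges, yet each deletion risks creating a short non-edge that would push the optimal weight above $2k+2$; the graph is arranged so that all the dangerous short non-edges point into the single vertex $k+2=n-1$, which $G$ reaches only through the bypass edge $(0,k+2)$, and that one feature makes the two requirements compatible.
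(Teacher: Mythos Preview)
Your proof is correct. It differs from the paper's in two ways. First, you reverse the direction: you establish criticality of $k$ directly and then invoke Remark~\ref{x1x} for $1/k$, whereas the paper proves criticality of $1/k$ directly (and invokes the same remark for $k$). Second, the witnessing graphs are entirely different: the paper uses a much simpler ``double fan'' on $n=k+2$ vertices, with edge set $\bigcup_{i=1}^{n-1}\{(0,i),(i,n)\}\cup\{(1,n-1)\}$, for which the maximal $(1/k,G)$-weight is~$3$, attained by $(0,1,n-1,n)$ (all blue) and by $(0,1,\ldots,n)$ (two blue, $n-2$ red); the maximality check there is a two-line case split on path length. Your construction on $n=k+3$ vertices is more intricate, but your span-sum inequality $w^k_G(\pi)\le n+\sum_{e\text{ red in }\pi}(k-\operatorname{span}(e))$, combined with the observation that all red edges of span~$<k$ terminate in $\{n-1,n\}$ so that at most one can occur in any path, is a clean and reusable device. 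The paper's route is shorter; yours supplies an independent family of witnessing graphs and a bounding technique that could be useful elsewhere.
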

\begin{proof}
By Remark \ref{x1x}, it is enough to show the criticality of $x=1/k$ for
some integer $k \ge 2$.
We will take $n =k+2$ and exhibit a graph $G \in \fH_n$ satisfying the condition
of the definition of criticality.
Consider the graph $G$ with edges
\\
\begin{minipage}{0.5\textwidth}
\[
\edges(G) = \bigcup_{i=1}^{n-1} \big\{(0,i), (i,n)\big\} \cup \big\{(1,n-1)\big\}
\]
\end{minipage}
\begin{minipage}{0.5\textwidth}
\includegraphics[width=7cm, height=3cm]{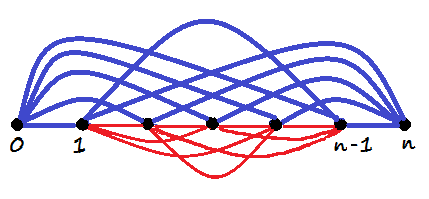}
\end{minipage}
\\
It is easy to see  that $G \in \fH_n$.
Indeed, for every $i \in [1,n-1]$ the sequence $(0,i,n)$ is a path in $G$
containing $i$;
if $1 \le j  \le n-2$ then there is no path in $G$ from $j$ to $j+1$;
for $j=n-1$ there is no path in $G$ from $n-2$ to $n-1$.
We now show that $W^{x}_G = 3$.
If $\pi \in \Pi_{0,n}$ has length at most $3$ then
$w^x_G(\pi) \le 3$ since the weight of each edge is at most 1.
If $\pi \in \Pi_{0,n}$ has length $\ell>3$ then
$\pi=(0,i_1, \ldots,i_{\ell-1}, n)$, and we see that 
$N_G(\pi)=2$, $\overline N_G(\pi)=\ell-2$,
so $w^x_G(\pi) = 2+ x(\ell-2) \le 2+ \frac{1}{k} (n-2) = 3$.
The path $\pi_1=(0,1,n-1,n)$ has $w^x_G(\pi_1)=3$.
So $W^x_G=3$, as claimed.
Consider also the path $\pi_2=(0,1,2,\ldots,n-1,n)$.
Again, $w^x(\pi_2)=3$ as well.
However, $N_G(\pi_1)=3$ but $N_G(\pi_2)=2$.
Hence $x=1/k$ is critical.
\end{proof}

\subsection{\black Properties of maximal paths; identifying noncritical positive points}
Showing noncriticality requires a bit more work. This relies on
identifying some properties of maximal paths.
We explain these properties in the four lemmas below
and then show that all positive real numbers, except those that are 
equal to $k$ or $1/k$ where $k \ge 2$ is an integer, are noncritical.
We need some auxiliary terminology:
\\[1mm]
{\black
$\bullet$
Every edge of the form $(i,i+1)$ is called {\bf\em short}. Otherwise, it is called {\bf\em long}.
\\
$\bullet$ We say that edge $e=(i,j)$ is {\bf\em  nested} in $e'=(i',j')$  
if $i' \le i < j \le j'$ and $e\ne e'$. 
}
\\[1mm]
The usefulness of this notion is as follows. Let $\pi' = (i', i, j, j')$ (maybe $i=i'$ or $j=j'$ here). Clearly, if $w^x(\pi')>w^x(e')$ then no maximal path may contain the edge $e'$. The next lemma specifies several cases when this condition holds.
To ease language, we think of all edges as being either blue (these
are the edges of $G$) or red (the nonedges of $G$).
Blue edges have weight $1$. Red edges have weight $x$.
So, for an arbitrary path $\pi$, $N_G(\pi)$, respectively $\overline N_G(\pi)$, 
is the number of blue, respectively  red, edges of $\pi$.

\begin{lemma}
\label{nested}
Assume the edge $e=(i,j)$ is nested in $e'=(i',j')$  and one of the following conditions holds:
\\
1) $x>0$ and $e$ and $e'$ have the same color;
\\
2)  $0<x<2$ and $e$ is blue;
\\
3) $0<x<2$ and $e'$ is red.
\\
Then $w^x(\pi')>w^x(e')$, where $\pi' = (i', i, j, j')$,
where we allow the possibility that $i'=i$ or $j'=j$.
\end{lemma}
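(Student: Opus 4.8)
The plan is to compare the weight of the detour $\pi' = (i',i,j,j')$ directly with the weight of the single edge $e'$, splitting into cases according to the colours of $e$ and $e'$. First I would record a combinatorial observation: since $e \neq e'$ and $e$ is nested in $e'$ (so $i' \le i < j \le j'$ with not both equalities), at least one of the two ``framing'' edges $(i',i)$ (present in $\pi'$ exactly when $i' < i$) and $(j,j')$ (present exactly when $j < j'$) genuinely occurs. Writing $S$ for the sum of the weights of these framing edges, we then have $w^x(\pi') = w^x(e) + S$. In all three cases of the lemma $x > 0$, so every edge weight is at least $\min(1,x) > 0$, and hence $S \ge \min(1,x) > 0$.

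Next I would handle case 1. If $e$ and $e'$ have the same colour then $w^x(e) = w^x(e')$ (both equal to $1$, or both equal to $x$), so $w^x(\pi') = w^x(e') + S > w^x(e')$, which is the claim. Because of this, in cases 2 and 3 I may assume $e$ and $e'$ have \emph{different} colours.

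For case 2, $e$ is blue, and under the assumption of different colours $e'$ is red, so $w^x(e) = 1$, $w^x(e') = x$, and $w^x(\pi') = 1 + S$. If $0 < x \le 1$, then $w^x(\pi') = 1 + S > 1 \ge x = w^x(e')$. If $1 < x < 2$, then every edge weight is $\ge \min(1,x) = 1$, so $S \ge 1$ and $w^x(\pi') \ge 2 > x = w^x(e')$. Either way the inequality holds, and it is strict. Case 3 then reduces to the previous two: if $e'$ is red and $e$ is red we are in case 1, while if $e'$ is red and $e$ is blue we are in case 2.

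The only mildly delicate point --- the ``main obstacle'', such as it is --- is the sub-case $1 < x < 2$ of case 2: there the single blue edge $e$ (weight $1$) does not by itself beat the red edge $e'$ (weight $x$), so one must use both the presence of a framing edge and the hypothesis $x < 2$ to push the total strictly above $w^x(e')$. Everything else is routine bookkeeping over which of $i' = i$, $j' = j$ hold and over the colour assignments.
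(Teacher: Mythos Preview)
Your proof is correct and follows essentially the same approach as the paper: both arguments exploit that $\pi'$ contains at least one edge besides $e$ (your ``framing'' edge, the paper's $e''$) whose weight is at least $\min\{1,x\}$, and then compare $w^x(e)+\min\{1,x\}$ with $w^x(e')$. The only cosmetic differences are that the paper handles cases 2) and 3) in parallel via the identity $1-\max\{x,1\}+\min\{x,1\}=2\min\{x,1\}-x=\min\{x,2-x\}$, whereas you split case 2) explicitly at $x=1$ and then reduce case 3) back to cases 1) and 2).
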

\begin{proof}
Since $e\ne e'$, the path $\pi'$ contains at least one edge other than $e$. Denote it by $e''$. Then 
\[
w^x(\pi')-w^x(e') \ge w^x(e)+w^x(e'')-w^x(e')
\ge w^x(e)+\min\{x,1\} -w^x(e').
\]
1) If $e$ and $e'$ are of the same color then $w^x(e)=w^x(e')$, so the right-hand side
of the above display equals  $\min\{x,1\}$ which is positive.
\\
2) If $e$ is blue then $w^x(e)=1$ and so
$w^x(\pi')-w^x(e')\ge 1 -\max\{x,1\}+\min\{x,1\} = \min\{x,2-x\}>0$, 
since $0<x<2$.
\\
3) If $e'$ is red then $w^x(e')=x$ 
and so $w^x(\pi')-w^x(e') \ge 2\min\{x,1\} - x = \min\{x,2-x\}>0$, 
since $0<x<2$.
\end{proof}

\begin{lemma}
\label{shortfact}
If $0<x<2$ then every maximal path contains all short blue edges.
\end{lemma}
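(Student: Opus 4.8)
The statement to prove is Lemma~\ref{shortfact}: for $0<x<2$, every maximal path contains all short blue edges. The strategy is a direct exchange argument. Fix $n$, a graph $G\in\fG_n$, and suppose $\pi$ is an $(x,G)$-maximal path that fails to contain some short blue edge $(i,i+1)$. I want to produce from $\pi$ another path of strictly greater $(x,G)$-weight, contradicting maximality.

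\textbf{Key steps.} First, since $\pi$ does not contain the vertex-pair $(i,i+1)$ as an edge, consider the position of the integers $i$ and $i+1$ relative to the vertices of $\pi$. There are essentially three cases: (a) both $i$ and $i+1$ lie strictly between consecutive vertices of $\pi$, i.e. $\pi$ has an edge $(a,b)$ with $a\le i<i+1\le b$ but $(a,b)\ne(i,i+1)$ — so $(i,i+1)$ is nested in $(a,b)$ with at least one of the inequalities $a<i$ or $i+1<b$ strict; (b) $i$ is a vertex of $\pi$ but $i+1$ is not, so $\pi$ contains an edge $(i,b)$ with $b\ge i+2$; (c) $i+1$ is a vertex of $\pi$ but $i$ is not, so $\pi$ contains an edge $(a,i+1)$ with $a\le i-1$. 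In case (a), I replace the edge $(a,b)$ by the sub-path $(a,i,i+1,b)$ (collapsing endpoints that coincide), and invoke Lemma~\ref{nested}(2): the short blue edge $e=(i,i+1)$ is nested in $e'=(a,b)$ and $e$ is blue, so with $0<x<2$ we get $w^x\big((a,i,i+1,b)\big)>w^x\big((a,b)\big)$, hence the new path beats $\pi$. In cases (b) and (c), the replacement is to insert the single vertex $i+1$ (resp.\ $i$) into $\pi$, splitting the edge $(i,b)$ into $(i,i+1)$ and $(i+1,b)$ (resp.\ $(a,i+1)$ into $(a,i)$ and $(i,i+1)$). The new blue edge $(i,i+1)$ has weight $1$, and the other new edge has weight at least $\min\{x,1\}>0$, while the removed edge had weight at most $\max\{x,1\}$; I must check that $1+\min\{x,1\}>\max\{x,1\}$, which is exactly $\min\{x,2-x\}>0$, true since $0<x<2$. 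So again the weight strictly increases.

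\textbf{Finishing.} In every case the modified sequence is still a valid strictly increasing sequence of integers from $0$ to $n$, i.e.\ an element of $\Pi_{0,n}$, and it has strictly larger $(x,G)$-weight than $\pi$, contradicting maximality of $\pi$. Hence no maximal path can omit a short blue edge.

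\textbf{Main obstacle.} The argument itself is short; the only real care needed is the bookkeeping of the case split — in particular handling degenerate cases where the endpoints of the replaced edge already coincide with $i$ or $i+1$ (so that ``inserting'' a vertex is vacuous or the edge is already present, which cannot happen by hypothesis), and making sure that in case (a) one genuinely has $e\ne e'$ so that Lemma~\ref{nested} applies with a strict inequality. The condition $x<2$ is used precisely through $\min\{x,2-x\}>0$; the condition $x>0$ ensures the auxiliary edge weight $\min\{x,1\}$ is positive. There is no deeper difficulty.
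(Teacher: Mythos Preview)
Your proof is correct and follows the same exchange argument as the paper's. Note, however, that your case split is unnecessary: since the definition of ``nested'' in Lemma~\ref{nested} explicitly allows $i'=i$ or $j'=j$, your cases (b) and (c) are already instances of case (a), and a single invocation of Lemma~\ref{nested}(2) handles all of them at once --- which is exactly how the paper's proof proceeds.
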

\begin{proof}
Let $\pi$ be a maximal path and assume there is a short blue edge
$e=(j,j+1)$ which is not an edge of $\pi$. Then the path $\pi$ must contain an edge 
$e'=(i',j')$ such 
that $e$ is nested in $e'$. Let $\pi'=(i', j, j+1, j')$.
Then by the second part of Lemma~\ref{nested}, 
$w^x(e')<w^x(\pi')$. 
This contradicts the maximality of $\pi$
because we can replace the edge $e'$ of $\pi$ by $(i',i,j,j')$ and
obtain a path with weigh strictly larger than the weight  $\pi$.
\end{proof}

\begin{lemma}
\label{longfact}
If $0<x<2$ 
then every long edge of a maximal path $\pi$ is blue
(
in other words, every red edge of $\pi$ must be short).
\end{lemma}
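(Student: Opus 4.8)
The plan is to argue by contradiction, exploiting the same nesting trick that drove Lemma \ref{shortfact}, but now pushing it against the structure forced by Lemma \ref{shortfact} itself. Suppose $\pi$ is a maximal path with a long red edge $e=(i',j')$, so $j' \ge i'+2$. The key point is that every short blue edge must already be in $\pi$ by Lemma \ref{shortfact}; I want to locate one such short blue edge that is ``underneath'' $e$ and derive a contradiction. First I would look at the vertices strictly between $i'$ and $j'$: since $\pi$ jumps directly from $i'$ to $j'$, none of them lie on $\pi$. Now consider whether any of the short edges $(i', i'+1), (i'+1,i'+2), \ldots, (j'-1,j')$ is blue in $G$. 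If, say, $(k,k+1)$ is blue with $i' \le k < j'$, then it is a short blue edge not contained in $\pi$ (since $k$ or $k+1$ is an interior vertex of the span of $e$, hence off $\pi$, unless $k=i'$ or $k+1=j'$; but even in those boundary cases the edge $(k,k+1)$ is not an edge of $\pi$ because $\pi$ uses the single long edge $e$ instead), directly contradicting Lemma \ref{shortfact}. So I may assume all of $(i',i'+1), \dots, (j'-1,j')$ are red.

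But now I want to replace the long red edge $e'=e=(i',j')$ of weight $x$ by the detour $\pi' = (i', i'+1, i'+2, \dots, j')$ consisting of $j'-i' \ge 2$ short edges, all of which are red with weight $x$. This detour has weight $(j'-i')x \ge 2x$, while $e'$ has weight $x$; since $x>0$, we get $w^x(\pi') = (j'-i')x > x = w^x(e')$, and replacing $e'$ in $\pi$ by this detour strictly increases the total weight, contradicting maximality of $\pi$. Hence no long edge of a maximal path can be red; equivalently, every red edge of a maximal path is short. (Alternatively, and more in the spirit of Lemma \ref{nested}: the hypotheses $0<x<2$ together with $e$ red let one invoke part 3 of Lemma \ref{nested} applied to a suitable nesting, but the direct detour argument above is cleaner and only uses $x>0$ once the short edges under $e$ are known to be red.)

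The only genuinely delicate point — and the step I would write most carefully — is the case analysis when the blue short edge under $e$ is one of the boundary edges $(i',i'+1)$ or $(j'-1,j')$: one must check that such an edge is still genuinely ``not an edge of $\pi$'' so that Lemma \ref{shortfact} really is contradicted, using that $\pi$ traverses the span $[i',j']$ via the single edge $e$. Once that bookkeeping is pinned down, the rest is the one-line weight comparison above, and the lemma follows.
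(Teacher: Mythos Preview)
Your proof is correct, but it takes a more circuitous route than the paper's. The paper dispatches the lemma in two lines: if $e'=(i',j')$ is a long edge of a maximal path with $j'-i'\ge 2$, then \emph{any} edge $e$ nested in $e'$ (e.g.\ $e=(i',i'+1)$) lets you invoke part~3 of Lemma~\ref{nested} directly when $e'$ is red, yielding $w^x(e') < w^x(i',i,j,j')$ and hence a contradiction to maximality. No case analysis, no appeal to Lemma~\ref{shortfact}.

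Your argument instead splits on whether some short edge under $e$ is blue (contradicting Lemma~\ref{shortfact}) or all are red (in which case the full short-edge detour beats $e$ since $(j'-i')x>x$ for $x>0$). This is valid, and the bookkeeping you flag about the boundary short edges $(i',i'+1)$ and $(j'-1,j')$ is handled correctly: since $\pi$ uses the single edge $(i',j')$, neither boundary short edge can be an edge of $\pi$. However, your remark that the detour argument ``only uses $x>0$'' is a bit misleading: the overall proof still needs $0<x<2$ because you invoke Lemma~\ref{shortfact} in the first case, so the case split buys no generality. The paper's approach is both shorter and avoids the dependence on Lemma~\ref{shortfact} altogether---indeed, you even sketch that alternative at the end, and it is the one to prefer.
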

\begin{proof}
Let $\pi$ be a maximal path having a long edge $e'=(i',j')$. Since $j'-i' \ge 2$,
there exists an edge $e=(i,j)$ that is nested in $e'$. If $e'$ is red then  by the third condition of Lemma~\ref{nested}, 
$w^x(e')<w^x(i',i,j,j')$, contradicting the maximality of $\pi$. 
\end{proof}

\begin{remark}
Lemmas \ref{shortfact} and \ref{longfact} tell us that  if $0<x<2$ then
the edges of a maximal path are classified as follows;
Long edges: they are all blue.
Short edges: they include every possible blue edge and, possibly, some red ones.
\end{remark}

The next lemma follows directly form the first condition of Lemma~\ref{nested}. 

\begin{lemma}
\label{nestfact}
If $x>0$ then no blue edge of a maximal path can be nested in a 
different blue edge of another
maximal path.
\end{lemma}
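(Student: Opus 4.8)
The plan is to read the claim as an immediate consequence of the first case of Lemma \ref{nested}. Suppose, for contradiction, that $e=(i,j)$ is a blue edge of some maximal path $\pi$, that $e'=(i',j')$ is a blue edge of some (other) maximal path $\pi'$, and that $e$ is nested in $e'$. Since both edges are blue, condition 1) of Lemma \ref{nested} applies (recall we are in the regime $x>0$), so the four-vertex path $\sigma=(i',i,j,j')$ — with the usual convention that $i'=i$ or $j'=j$ is permitted — satisfies $w^x(\sigma)>w^x(e')$. I would then modify $\pi'$ by deleting the single edge $e'$ and inserting, in its place, the (at most two) intermediate vertices $i,j$ of $\sigma$. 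Concretely, write $\pi'=(0,\ldots,i',j',\ldots,n)$ and replace the step $i'\to j'$ by $i'\to i\to j\to j'$; this yields a genuine path $\pi''\in\Pi_{0,n}$ because $i'\le i<j\le j'$ keeps the vertex sequence strictly increasing.

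Next I would compare weights. Outside the edge $e'$ the paths $\pi'$ and $\pi''$ share exactly the same edges, so $w^x(\pi'')-w^x(\pi')=w^x(\sigma)-w^x(e')>0$. Hence $w^x(\pi'')>w^x(\pi')=W^x_G$, contradicting the definition of $W^x_G$ as the maximum $(x,G)$-weight over $\Pi_{0,n}$. This contradiction shows no such configuration can occur, which is exactly the statement of the lemma.

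There is essentially no obstacle here; the only point requiring a little care is the bookkeeping of edges when $i'=i$ or $j'=j$, i.e.\ when $\sigma$ has fewer than three edges — but Lemma \ref{nested} is already stated to cover those degenerate cases, and in each of them the substitution still produces a valid strictly increasing vertex sequence and still strictly increases the weight. So the proof is just: invoke case 1) of Lemma \ref{nested}, perform the edge substitution on $\pi'$, and note the weight strictly increases, contradicting maximality.
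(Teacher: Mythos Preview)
Your proof is correct and takes exactly the same approach as the paper, which simply states that the lemma ``follows directly from the first condition of Lemma~\ref{nested}.'' You have spelled out the substitution-and-contradiction argument that the paper leaves implicit, including the handling of the degenerate cases $i'=i$ or $j'=j$.
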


{\black Let $\pi, \pi' \in \Pi_{0,n}$.
We say that the interval $[i,j] \subset [0,n]$ is {\emph $(\pi,\pi')$-special}
if the set of vertices $k \in [i,j]$ that belong to both $\pi$ and $\pi'$
consists of $i$ and $j$ only.}

\begin{lemma}
\label{specialfact}
Let $0<x<2$. Then for every pair $\pi, \pi' \in \Pi_{0,n}$ of maximal paths such that 
$\overline N_G(\pi)\neq \overline N_G(\pi')$ 
there is a $(\pi,\pi')$-special interval $I$ such that 
\[
\overline N_G(\pi|_{I})
\neq \overline N_G(\pi'|_{I})
\] 
and 
\[
N_G(\pi|_{I})-N_G(\pi'|_{I}) \in\{-1,1\}.
\]
\end{lemma}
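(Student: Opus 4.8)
The plan is to compare the two maximal paths $\pi$ and $\pi'$ edge-by-edge through the decomposition of $[0,n]$ into maximal special intervals. Let $0 = k_0 < k_1 < \cdots < k_m = n$ be the common vertices of $\pi$ and $\pi'$ (these always include $0$ and $n$; by Lemma \ref{shortfact} and the remark that $G \in \fH_n$ forces $(0,1)$ and $(n-1,n)$ to be blue edges, one could even say more, but we will not need this). Then each interval $I_s := [k_{s-1}, k_s]$ is $(\pi,\pi')$-special, and $\pi$ decomposes as the concatenation of its restrictions $\pi|_{I_1}, \ldots, \pi|_{I_m}$, and likewise for $\pi'$. First I would observe that $\overline N_G(\pi) = \sum_s \overline N_G(\pi|_{I_s})$ and $N_G(\pi) = \sum_s N_G(\pi|_{I_s})$, with the same for $\pi'$. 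Since by hypothesis $\overline N_G(\pi) \neq \overline N_G(\pi')$, there must be at least one index $s$ with $\overline N_G(\pi|_{I_s}) \neq \overline N_G(\pi'|_{I_s})$; this gives the first required property. The point of the argument is then to show that \emph{some} such $s$ can be chosen so that in addition $N_G(\pi|_{I_s}) - N_G(\pi'|_{I_s}) \in \{-1, 1\}$.

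The key input is that $\pi$ and $\pi'$ are both $(x,G)$-maximal, hence have equal $(x,G)$-weight $W^x_G$. Restricted to a single special interval $I_s = [a,b]$, both $\pi|_{I_s}$ and $\pi'|_{I_s}$ are paths from $a$ to $b$, and I claim both are $(x,G)$-maximal among all paths from $a$ to $b$: if, say, $\pi|_{I_s}$ were not maximal on $[a,b]$, we could replace that stretch of $\pi$ by a heavier $a$-to-$b$ sub-path and strictly increase $w^x_G(\pi)$, contradicting maximality of $\pi$. Therefore $w^x_G(\pi|_{I_s}) = w^x_G(\pi'|_{I_s})$ for \emph{every} $s$, i.e.\ $N_G(\pi|_{I_s}) + x\,\overline N_G(\pi|_{I_s}) = N_G(\pi'|_{I_s}) + x\,\overline N_G(\pi'|_{I_s})$. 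Writing $\delta_s := N_G(\pi|_{I_s}) - N_G(\pi'|_{I_s})$ and $\overline\delta_s := \overline N_G(\pi|_{I_s}) - \overline N_G(\pi'|_{I_s})$, this says $\delta_s = -x\,\overline\delta_s$ for all $s$, and also $\delta_s + \overline\delta_s = |\pi|_{I_s}| - |\pi'|_{I_s}|$.

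Now I would use Lemmas \ref{shortfact} and \ref{longfact} to control the structure of $\pi|_{I_s}$ and $\pi'|_{I_s}$ on a special interval. Because $I_s$ is special, apart from its endpoints $a$ and $b$ the vertex sets of $\pi|_{I_s}$ and $\pi'|_{I_s}$ are disjoint. By Lemma \ref{longfact} every red edge of a maximal path is short (length $1$), and by Lemma \ref{shortfact} every maximal path already contains every \emph{blue} short edge. The crucial consequence is that on a special interval one of the two sub-paths must be the single long edge $(a,b)$: indeed if both $\pi|_{I_s}$ and $\pi'|_{I_s}$ had an internal vertex, pick internal vertices $u$ of $\pi|_{I_s}$ and $v$ of $\pi'|_{I_s}$ that are adjacent along their respective paths to a common endpoint — one shows the short blue edges forced by Lemma \ref{shortfact} near $a$ (namely $(a,a+1)$, which is blue since $G\in\fH_n$ gives it... actually $(a,a+1)$ need not be blue in general) — here I will instead argue directly: since every red edge of each maximal path is short and the interiors are disjoint, if the interior of $\pi|_{I_s}$ is nonempty it consists of consecutive integers $a+1, \ldots$ up to some point, etc. The cleanest route is: on $I_s$, at least one of $\pi|_{I_s}, \pi'|_{I_s}$ equals the edge $(a,b)$ — suppose not; then each has an internal vertex, but then by specialness $b-a \ge 3$, and by Lemma \ref{shortfact} both must use the short blue edge $(a,a+1)$ if it exists, forcing a common internal vertex $a+1$, a contradiction unless $(a,a+1)$ is not an edge of $G$; iterating this kind of reasoning, or arguing via Lemma \ref{nestfact}, one forces the stated structure. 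Once we know (WLOG) $\pi'|_{I_s} = (a,b)$, we have $N_G(\pi'|_{I_s}) \in \{0,1\}$ and $\overline N_G(\pi'|_{I_s}) \in \{0,1\}$ with exactly one of them $=1$; combined with $\delta_s = -x\,\overline\delta_s$, $x>0$, and $\overline\delta_s \neq 0$ for our chosen $s$, an easy case check on the one or two edges of $(a,b)$ versus the blue-long / short structure of $\pi|_{I_s}$ pins $\delta_s$ to $\pm 1$.

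\textbf{Main obstacle.} The delicate step is the structural claim that on any special interval at least one of the two maximal sub-paths is the single edge $(a,b)$ (or more generally has at most a controlled overlap), together with the subsequent case analysis relating $\delta_s$ and $\overline\delta_s$. This is where Lemmas \ref{shortfact}, \ref{longfact}, \ref{nestfact} must be deployed carefully, and where the restriction $0 < x < 2$ is genuinely used. I expect the bulk of the work — and the only place where something could go wrong — to be in showing that no special interval can have \emph{both} sub-paths with nonempty interior while still keeping $\overline\delta_s \neq 0$, and then in verifying that $|\delta_s| = 1$ rather than $0$ on the interval we select (we pick $s$ with $\overline\delta_s \neq 0$; then $\delta_s = -x\overline\delta_s \neq 0$ since $x \neq 0$, and the structural description bounds $|\delta_s|$ by $1$).
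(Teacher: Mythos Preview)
Your decomposition into special intervals, the selection of an interval $I_s$ with $\overline\delta_s \neq 0$, and the equality $w(\pi|_{I_s}) = w(\pi'|_{I_s})$ (hence $\delta_s = -x\,\overline\delta_s \neq 0$) are all correct and match the paper's setup exactly. The gap is your structural claim that on such a special interval at least one of the two sub-paths must be the single edge $(a,b)$. This is false, and your own hedging (``actually $(a,a+1)$ need not be blue in general'', ``iterating this kind of reasoning'') already signals that the argument does not close.

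Here is a concrete counter-example. Take $[a,b]=[0,7]$, let $G$ have blue edges exactly $(0,3)$, $(2,5)$, $(4,7)$ (all other pairs red), and set $x=1/3$. Then
\[
\pi|_{[0,7]}=(0,3,4,7),\qquad \pi'|_{[0,7]}=(0,1,2,5,6,7)
\]
both have weight $7/3$, which one checks is the maximum over all paths from $0$ to $7$; their internal vertex sets $\{3,4\}$ and $\{1,2,5,6\}$ are disjoint, so $[0,7]$ is special; and $\overline N_G(\pi)=1\neq 4=\overline N_G(\pi')$. Lemmas \ref{shortfact}--\ref{nestfact} are all satisfied (there are no short blue edges; every long edge used is blue; no blue edge of one path is nested in a blue edge of the other). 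Neither sub-path is the single edge $(0,7)$.

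What the paper actually does on the chosen special interval is different and more delicate. By Lemmas \ref{shortfact} and \ref{longfact}, each sub-path is an alternation of long blue edges separated by \emph{gaps} (maximal runs of short red edges, possibly degenerate to a point). The key step is to show that every gap of $\pi|_I$ is strictly nested inside a unique blue edge of $\pi'|_I$: disjointness of the interiors forces the gap to sit inside some edge of $\pi'|_I$, and Lemma \ref{nestfact} rules out that edge being red-adjacent rather than blue. This yields an injection from the set $\GG$ of gaps of $\pi|_I$ into the set $\NN'$ of blue edges of $\pi'|_I$, while elementary counting gives $|\GG|-N_G(\pi|_I)\in\{-1,0,1\}$. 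Combining these two facts (and their symmetric counterparts) forces $|N_G(\pi|_I)-N_G(\pi'|_I)|\le 1$; since you already know this difference is nonzero, the lemma follows. Your single-edge reduction would bypass this gap/blue-edge matching, but it is simply not available.
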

\begin{proof}
Let $\pi, \pi'$ be two $(x,G)$-maximal paths, where $0<x<2$ and $G \in \fG_n$.
Since $x$ and $G$ are fixed throughout this proof, we omit them in superscripts
or subscripts and, for example, simply write $w$ instead of
$w^x_G$.
Let
\[
0=s_0 < s_1 < \cdots < s_r=n
\]
be the common vertices of the two paths and let $\pi_k, \pi_k'$ be the restrictions of
$\pi, \pi'$, respectively, on the set of vertices $[s_{k-1}, s_k]$,
$k=1,\ldots, n$. Any of these intervals is $(\pi,\pi')$-special, but
we  pick it in a way that 
\begin{equation}
\label{NN}
\overline N(\pi_k) \neq \overline N(\pi'_k).
\end{equation} 
Indeed, if we had $\overline N(\pi_k) = \overline N(\pi'_k)$ for all $k$,
we would have $\overline N(\pi) = \sum_{k=1}^r \overline N(\pi_k)
= \sum_{k=1}^r \overline N(\pi_k') = \overline N(\pi')$, 
in contradiction to the assumption.
On the other hand we have
\begin{equation}
\label{ww}
w(\pi_k) = w(\pi'_k).
\end{equation}
Indeed, if there were a $k$ for which $w(\pi_k) < w(\pi'_k)$ then
we would have obtained a path of weight strictly larger than $w(\pi)$
by simply replacing the subpath $\pi_k$ with $\pi'_k$ in $\pi$.
We now focus on the $(\pi,\pi')$-special interval
\[
I=[s_{k-1}, s_k] =: [i,j]
\]
and show that $N(\pi_k) - N(\pi_k') = \pm 1$.
We already know that 
\begin{equation}
\label{not0}
N(\pi_k) - N(\pi_k')  \neq 0
\end{equation}
because 
of \eqref{NN}, \eqref{ww} and the assumption that $x>0$.
Consider the structure of maximal paths, as shown in Lemmas \ref{shortfact},
\ref{longfact} and \ref{nestfact}. 
For either of the paths $\pi$, $\pi'$, every long edge is blue and every short edge is red.
A typical maximal path is has blue (long) edges interlaced by intervals of
red (short) edges.
Bear in mind that the latter intervals may be just a single point.
See Figure \ref{twopaths} for an illustration.
Let $\NN$ be the set of blue edges of $\pi_k$. Similarly, $\NN'$ for $\pi_k'$.
To prove the claim, we need to define the following set of intervals:
\begin{align*}
&\GG := \GG_\internal \cup \GG_\first \cup \GG_\last, \text{ where }
\\
&\GG_\internal
:=\big\{ [u,v]:\, i<u \le v<j ~\exists a,b \text{ such that } (a,u), (v,b) \in \NN \big\},
\\
&\GG_\first:=\big\{[i,v]:\, i<v \text{ and } \exists b \text{ such that }  (v,b) \in \NN \big\},
\\
&\GG_\last:=\ \big\{[u,j]:\, u<j \text{ and } \exists a \text{ such that }  (a,u) \in \NN \big\}.
\end{align*}
{\black Note that all intervals $[u,v], [i,v],$ and $[u,j]$ mentioned above consist 
of short red edges in the path $\pi$.}
We similarly define $\GG'$ for $\pi_k'$.
Elements of $\GG$ are referred to as ``gaps'', meaning spaces between successive
blue edges.
Note that a element $[u,v]$ of  $\GG_\internal$ may be a single point if $u=v$
and is the point where a blue edge finishes and another one starts.
If it is not a point then, necessarily, the short edges on that interval are red.
The elements of $\GG_\internal$ are ``internal intervals''.
Note also that the set $\GG_\first$ is either empty (if the first blue edge starts at $i$)
or a singleton.
In the latter case, the short edges within the unique interval $[i,v]$ of $\GG_\first$
are red.
Similarly $\GG_\last$ is either empty or a singleton containing an interval
of the form $[u,j]$ such that all the short edges in it are red.
Let
\[
\Gamma : = |\GG|
\]
and notice that
\begin{equation}
\label{R-N}
\Gamma-N \in \{0,-1,1\}.
\end{equation}
We establish a bijection $\phi : \GG \to \NN'$, from the set of gaps of $\pi_k$
onto the set of blue intervals of $\pi_k'$ that can simply be described as follows.
If $g \in \GG$, there is a unique {\black $e' \in \NN'$} such that $g$ is strictly nested in $e'$.
We need to show that this is possible. 
Let $g=[u,v] \in \GG_\internal$. 
If $u=v$ then, by virtue of the fact that $\pi_k$ and $\pi_k'$ have no common
vertices other than $i$ and $j$, there is a blue edge $e'=(a',b') \in \NN'$
such that $a' < u=v < b'$.
If $u< v$ then there can be no gap $[u',v'] \in \GG'$ such that $[u,v] \cap [u',v']
\neq \varnothing$ otherwise the two paths would have common internal vertices.
Again, there is a unique $e'=(a',b') \in \NN'$ such that $a' < u < v < b'$.
If $g = [i,v] \in \GG_\first$ then $i<v$.
Let $(a',b') \in \NN'$ be the leftmost  blue edge of $\pi_k'$.
We cannot have $a'>i$ because, in this case, $[i,a'] \in \GG'_\first$
which would imply that $[i,v] \cap [i,a'] \neq \varnothing$ and then
the two paths would have common internal vertices, which is impossible.
Hence $a'=i$.
The endpoint $b'$ of the blue edge $[i,b]$ must be strictly larger than $v$,
otherwise, again, the two paths would have had common internal vertices.
Hence if $g \in \GG_\first$ there is $e' \in \NN'$ such that $g$ is strictly nested in $e'$.
Completely symmetrically, we have that
if $g \in \GG_\last$ there is $e' \in \NN'$ such that $g$ is strictly nested in $e'$.
The mapping
\[
\phi : \GG \to \NN'
\]
has thus been constructed.
{\black This mapping is one-to-one and onto by Lemma 9 and since the interval $[i,j]$ is $(\pi,\pi')$-special.}
We therefore have
\begin{equation}
\label{equal}
\Gamma = N'.
\end{equation}
We now consider the three possible values of $\Gamma-N$ as in \eqref{R-N}.
If $\Gamma-N=0$ then \eqref{equal} gives $N=N'$, in contradiction to \eqref{not0}.
If $\Gamma-N=\pm 1$ then \eqref{equal} gives $N'-N=\pm 1$.
This concludes the proof.
\begin{figure}[h] 
\includegraphics[width=11cm]{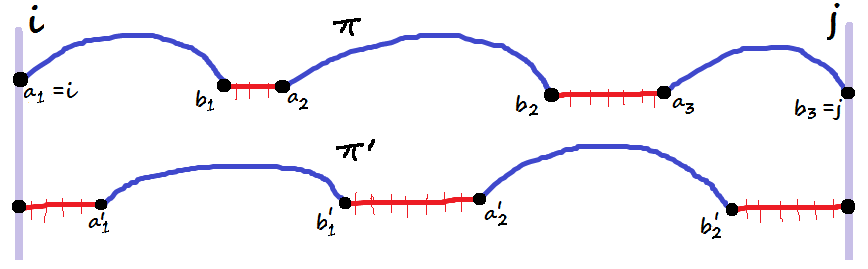}
\captionof{figure}{Two maximal paths on an interval $[i,j]$
having no common vertices other than $i$ and $j$}
\label{twopaths}
\end{figure}
\end{proof}

\begin{proposition}
\label{+noncrit01}
Every $0<x<2$ that is not  the reciprocal of an integer is not critical.
\end{proposition}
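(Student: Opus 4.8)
The plan is to obtain the statement as a short consequence of Lemma \ref{specialfact}, which already carries out all the combinatorial work. Arguing by contradiction, suppose $0<x<2$ is critical but is not the reciprocal of an integer. By Definition \ref{defcrit} there exist an integer $n\ge 3$, a graph $G\in\fH_n$, and two $(x,G)$-maximal paths $\pi_1,\pi_2\in\Pi_{0,n}$ with $\overline N_G(\pi_1)\neq\overline N_G(\pi_2)$. Applying Lemma \ref{specialfact} to the pair $\pi_1,\pi_2$ produces a $(\pi_1,\pi_2)$-special interval $I=[i,j]$ with $\overline N_G(\pi_1|_I)\neq\overline N_G(\pi_2|_I)$ and $N_G(\pi_1|_I)-N_G(\pi_2|_I)\in\{-1,1\}$.

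Next I would verify that the two restricted subpaths carry the same $(x,G)$-weight, $w^x_G(\pi_1|_I)=w^x_G(\pi_2|_I)$. This is the one small extra step: because $I$ is $(\pi_1,\pi_2)$-special, the only vertices of $I$ lying on both $\pi_1$ and $\pi_2$ are its endpoints $i$ and $j$, so splicing the $I$-portion of $\pi_2$ into $\pi_1$ yields a path of $\Pi_{0,n}$ whose $(x,G)$-weight equals $W^x_G-w^x_G(\pi_1|_I)+w^x_G(\pi_2|_I)$; maximality of $\pi_1$ forces $w^x_G(\pi_2|_I)\le w^x_G(\pi_1|_I)$, and the symmetric splice together with maximality of $\pi_2$ gives the reverse inequality. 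Expanding the defining relation $w^x_G(\cdot)=N_G(\cdot)+x\,\overline N_G(\cdot)$ on each side of $w^x_G(\pi_1|_I)=w^x_G(\pi_2|_I)$ and rearranging gives $x\big(\overline N_G(\pi_2|_I)-\overline N_G(\pi_1|_I)\big)=N_G(\pi_1|_I)-N_G(\pi_2|_I)\in\{-1,1\}$.

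Finally, set $d:=\overline N_G(\pi_2|_I)-\overline N_G(\pi_1|_I)$, which is a nonzero integer by the first conclusion of Lemma \ref{specialfact}; the displayed identity becomes $xd=\pm1$, and since $x>0$ this forces $|d|=1/x$ to be a positive integer, so $x=1/|d|$ is the reciprocal of an integer, contrary to assumption. I expect no real obstacle here: all the difficulty has been front-loaded into Lemma \ref{specialfact} (the construction of the bijection $\phi$ between the gaps of one maximal subpath and the blue edges of the other). The only genuine content beyond quoting that lemma is the cut-and-paste argument equalizing the weights on the special interval, together with the observation that the resulting $\pm1$ relation pins $x$ down to the reciprocal of an integer.
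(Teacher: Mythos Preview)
Your argument is correct and follows essentially the same route as the paper: invoke Lemma \ref{specialfact} to get a special interval with $N_G(\pi_1|_I)-N_G(\pi_2|_I)=\pm1$, use equality of the restricted weights, and solve for $x$. The only cosmetic difference is that the paper establishes $w^x_G(\pi_1|_I)=w^x_G(\pi_2|_I)$ inside the proof of Lemma \ref{specialfact} itself (equation \eqref{ww}) and then uses it without comment, whereas you reprove it here via the same cut-and-paste splice; either way the content is identical.
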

\begin{proof}
We prove the contrapositive: if $0<x<2$ is critical then $x=1/m$ for
some integer $m$.
So suppose that $x$ is critical and $0<x<2$.
Then there is $n\ge 3$ and $G \in \fH_{n}$ 
(edges of $G$ are called blue and nonedges red) and two maximal paths $\pi_1$,
$\pi_2$ with different number of red edges: $\overline N_G(\pi_1)\neq \overline N_G(\pi_2)$.
By the Lemma \ref{specialfact}, there is a $(\pi_1, \pi_2)$-special interval $[i,j]$
such that  $N_G(\pi_1|_I)-N_G(\pi_2|_I)\in \{1,-1\}$.
Since 
\[
0=w^x_G(\pi_1|_I)-w^x_G(\pi_2|_I) = (\overline N_G(\pi_1|_I)- \overline N_G(\pi_2|_I))x
+ (N_G(\pi_1|_I)-N_G(\pi_2|_I)),
\] 
it follows that
\[
x=\frac{1}{|\overline N_G(\pi_1|{[i,j]})- \overline N_G(\pi_2|{[i,j]})|},
\]
and hence the reciprocal of a positive integer.
\end{proof}

\subsection{\black Criticality of negative rationals}
We finally show that negative rational numbers are critical.
This is done via an explicit construction of an appropriate graph.
We deal with negative integers first.
\begin{proposition}
\label{-ncrit}
Any negative integer is critical.
\end{proposition}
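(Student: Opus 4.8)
The plan is to exhibit, for each positive integer $m$, an explicit graph in $\fH_n$ that witnesses the criticality of $x=-m$ in the sense of Definition~\ref{defcrit}. Take $n=2m+3$ and let $G\in\fG_n$ be the graph whose edge set consists exactly of the edges of the two directed paths
\[
P_1=(0,1,2,\ldots,m,m+1,2m+3),\qquad P_2=(0,m+2,m+3,\ldots,2m+2,2m+3),
\]
that is, calling edges of $G$ blue and non-edges red,
\[
\edges(G)=\{(k,k+1):0\le k\le m\}\cup\{(k,k+1):m+2\le k\le 2m+2\}\cup\{(m+1,2m+3),(0,m+2)\}.
\]
Both $P_1$ and $P_2$ have length $m+2$, they are internally vertex-disjoint, together they cover every vertex of $[0,n]$, and the only short edge that is red is $(m+1,m+2)$.

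First I would verify $G\in\fH_n$. The first condition of Definition~\ref{hndef} is immediate, since every $j\in[1,m+1]$ lies on the blue path $P_1$ from $0$ to $n$ and every $j\in[m+2,2m+2]$ lies on the blue path $P_2$ from $0$ to $n$. For the second condition I would use a reachability fact read straight off the edge list: from any vertex $v\ge1$ on $P_1$, the only vertices reachable by a blue path are the vertices of $P_1$ following $v$, together with $n$, and symmetrically for $P_2$; in particular no blue path joins an internal vertex of $P_1$ to an internal vertex of $P_2$. Hence for $j\in[1,m+1]$ the choice $i=m+2$ works, and for $j\in[m+2,2m+2]$ the choice $i=1$ works.

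Then I would compute the maximal $(x,G)$-weight. Since $x=-m$, a path $\pi$ with $\overline N_G(\pi)=r$ red edges has $w^x_G(\pi)=|\pi|-(m+1)r\le n-(m+1)r=(2m+3)-(m+1)r$, which is $\le m+2$ for $r\ge1$ and $<m+2$ for $r\ge2$; and for $r=0$ a blue path from $0$ to $n$ must begin with $(0,1)$ or $(0,m+2)$ and is then forced edge by edge to coincide with $P_1$ or $P_2$, hence has length $m+2$. So $\max_{\pi\in\Pi_{0,n}}w^x_G(\pi)=m+2$. The two witnessing paths are $\pi_1:=P_1$, which is blue of weight $m+2$ with $\overline N_G(\pi_1)=0$, and $\pi_2:=(0,1,2,\ldots,2m+3)$, the full short path, whose only red edge is $(m+1,m+2)$, so that $\overline N_G(\pi_2)=1$, $N_G(\pi_2)=2m+2$ and $w^x_G(\pi_2)=2m+2-m=m+2$. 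Both are $(x,G)$-maximal with $\overline N_G(\pi_1)\ne\overline N_G(\pi_2)$, so $x=-m$ is critical. The creative part is finding $G$; once it is written down everything above is routine, the points worth stating carefully being the two claims about blue paths, both of which follow at once from the observation that the blue subgraph is just two internally disjoint directed paths from $0$ to $n$.
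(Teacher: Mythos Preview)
Your proof is correct and follows essentially the same approach as the paper: the graph $G$ you construct on $[0,2m+3]$ is identical to the paper's (with your $m$ playing the role of the paper's $\ell$), as are the two witnessing paths $\pi_1$ and $\pi_2$. Your argument is in fact slightly more explicit than the paper's in one respect: you bound the weight of an arbitrary path by $|\pi|-(m+1)r\le(2m+3)-(m+1)r$ and split on $r$, whereas the paper simply asserts $W^{-\ell}_G=\ell+2$ without spelling out why no path can exceed this.
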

\begin{proof}
Let $-\ell$ be a negative integer.
Let $n=2\ell+3$.
Define $G$ by listing its edges:
\[
\edges(G) = \bigcup_{i=0}^\ell\big\{(i,i+1)\big\} \cup
\bigcup_{i=\ell+2}^{2\ell+2} \big\{(i,i+1)\big\} \cup
\big\{(0,\ell+2), (\ell+1, 2\ell+3)\big\}.
\]
\begin{center}
\includegraphics[width=9cm]{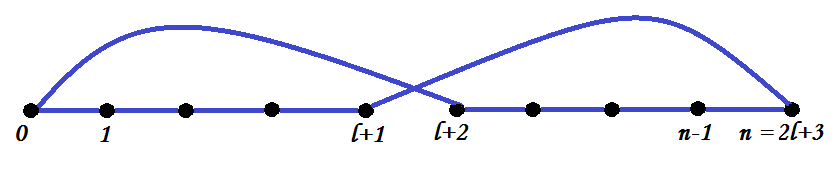}
\end{center}
\noindent
Note that $G \in \fH_n$ because, for all $j \in [1,n-1]$ there is a path in $G$ from that contains
$0$, $j$ and $n$ as vertices.
Moreover, every $j \le \ell+1$ is not connected to $\ell+2$
and every $j \ge \ell+2$ is not reachable from $\ell+1$.
We see that the maximal weight over all paths is
\[
W^{-\ell}_G=\ell+2
\]
and is achieved by the path
$\pi=(0,1,\ldots,\ell+1,n)$ that contains only blue edges.
On the other hand, the path $\pi'$ that contains all vertices between $0$ and $n$,
namely $\pi'=(0,1,\ldots,\ell+1,\ell+2, \ldots,n)$, has weight $w^{-\ell}_G(\pi')
= (\ell+1)\cdot 1 + (-\ell) + (\ell+1) \cdot 1 =\ell+2$,  that is, it has 
maximal weight. Notice that $\pi'$ contains exactly one red edge: the edge $(\ell+1, \ell+2)$.
The paths $\pi$ and $\pi'$ both achieve the maximal weight $\ell+2$ but contain
a different number of red edges ($\pi$ has no red edges but $\pi'$ has $1$). 
Hence $-\ell$ is critical.
\end{proof}

To show the criticality of negative rational numbers that are not integers we use
a consequence of Sturm's lemma \cite{LOT}  
to construct the graph $G$ in the definition of criticality.
Since this consequence is not proved in \cite{LOT} we provide a proof below.

\begin{lemma}[Corollary to Sturm's lemma]
\label{sturmlem}
Let $N, n$ be positive integers,  $N\ge n$. Then
there exists a unique finite sequence $v=(v_1,\ldots,v_N)$ of elements 
of $\{0,1\}$ such that, for all $0\le i<j\le N$, 
the following hold:
\begin{align}
& \sum_{i < k \le j} v_k \in \bigg\{ \bigg\lfloor (j-i) \frac{n}{N}  \bigg\rfloor, \,  \bigg\lceil (j-i) \frac{n}{N} \bigg\rceil \bigg\} , 
\label{UP}
\\
& \sum_{i < k \le N} v_k = \bigg\lfloor (N-i) \frac{n}{N}  \bigg\rfloor,
\label{DOWN}
\\
& v_1=1. \label{ONE}
\end{align}
\end{lemma}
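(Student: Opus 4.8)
The plan is to construct the sequence $v$ explicitly via the ``mechanical word'' (Beatty/Sturmian) construction and then verify the three properties. Concretely, I would set $v_k := \lceil k n/N \rceil - \lceil (k-1) n/N\rceil$ for $k=1,\dots,N$, which is the upper mechanical word of slope $n/N$. Since $0\le n/N\le 1$, each $v_k\in\{0,1\}$, and $v_1 = \lceil n/N\rceil - 0 = 1$ because $n\ge 1$, giving \eqref{ONE} immediately. The partial sum telescopes: $\sum_{i<k\le j} v_k = \lceil jn/N\rceil - \lceil in/N\rceil$. For \eqref{DOWN}, take $j=N$: since $N\cdot n/N = n$ is an integer, $\lceil Nn/N\rceil = n$, so $\sum_{i<k\le N}v_k = n - \lceil in/N\rceil$, and I must check this equals $\lfloor (N-i)n/N\rfloor$. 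Writing $in/N = \lfloor in/N\rfloor + \{in/N\}$, we have $\lceil in/N\rceil = \lfloor in/N\rfloor + \mathbf{1}[\{in/N\}>0]$, and $\lfloor (N-i)n/N\rfloor = \lfloor n - in/N\rfloor = n - \lfloor in/N\rfloor - \mathbf{1}[\{in/N\}>0]$ (the last step using that $n$ is an integer and the standard identity $\lfloor -y\rfloor = -\lceil y\rceil$). These match.

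For \eqref{UP}, I would bound $\lceil jn/N\rceil - \lceil in/N\rceil$. Using $y - 1 < \lceil y\rceil \le y$... more precisely $y \le \lceil y \rceil < y+1$, so $\lceil jn/N\rceil - \lceil in/N\rceil > jn/N - in/N - 1 = (j-i)n/N - 1$ and $< (j-i)n/N + 1$. Since the left side is an integer strictly between $(j-i)n/N - 1$ and $(j-i)n/N + 1$, it must be either $\lfloor (j-i)n/N\rfloor$ or $\lceil (j-i)n/N\rceil$ (these being the only integers in the closed interval $[(j-i)n/N - 1,\ (j-i)n/N+1]$ when $(j-i)n/N$ is not an integer, and equal to $(j-i)n/N$ itself when it is; in the latter case I should double-check the integer can't be off by one — it can't, since an integer strictly inside $(m-1,m+1)$ equals $m$). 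This establishes existence.

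For uniqueness, I would argue that conditions \eqref{UP}–\eqref{ONE} force the sequence. Suppose $v, v'$ both satisfy the conditions; let $S_j = \sum_{k\le j} v_k$, $S'_j$ similarly. From \eqref{UP} with $i=0$, both $S_j$ and $S'_j$ lie in $\{\lfloor jn/N\rfloor, \lceil jn/N\rceil\}$, so $|S_j - S'_j|\le 1$ for every $j$. If $v\neq v'$, pick the smallest $j$ with $v_j\neq v'_j$; then $S_{j-1}=S'_{j-1}$ and WLOG $S_j = S'_j + 1$, so $S_j = \lceil jn/N\rceil$ and $S'_j = \lfloor jn/N\rfloor$ with $jn/N\notin\Z$. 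Now apply \eqref{UP} on the block $(j, N]$: $\sum_{j<k\le N} v_k \in \{\lfloor (N-j)n/N\rfloor, \lceil (N-j)n/N\rceil\}$, but \eqref{DOWN} pins it to exactly $\lfloor (N-j)n/N\rfloor$ for \emph{both} $v$ and $v'$ — wait, \eqref{DOWN} with the index $j$ gives $\sum_{j<k\le N}v_k = \lfloor(N-j)n/N\rfloor = \sum_{j<k\le N}v'_k$. Hence $S_N - S_j = S'_N - S'_j$. But $S_N = S'_N = n$ (from \eqref{DOWN} with $i=0$), so $S_j = S'_j$, contradicting $S_j = S'_j+1$. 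This gives uniqueness. The main obstacle I anticipate is not the existence construction (which is routine once one guesses the mechanical word) but being careful with the floor/ceiling identities when $jn/N$ or $in/N$ is an integer, where the inequalities degenerate; I would handle those edge cases separately rather than trying to force a single uniform argument. I should also cross-check against the statement's reference to Sturm's lemma to make sure the normalization ($v_1=1$ vs.\ $v_1=0$, upper vs.\ lower mechanical word) is the one that makes uniqueness hold — the condition $v_1=1$ selects the ceiling-based word, consistent with the telescoping computation above.
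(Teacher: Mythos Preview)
Your proof is correct and takes a genuinely different, more self-contained route than the paper. The paper invokes Sturm's lemma from \cite{LOT} to obtain a periodic balanced bi-infinite sequence, then selects $v$ as the lexicographically smallest (reading right to left) among its $N$ cyclic shifts; property \eqref{ONE} is deduced from minimality, and \eqref{DOWN} is established by a block-counting contradiction (writing a word of length $sN$ two ways). By contrast, you write down the upper mechanical word $v_k=\lceil kn/N\rceil-\lceil (k-1)n/N\rceil$ directly and verify all three properties via elementary floor/ceiling identities and telescoping, with no appeal to an external lemma. Your approach is shorter and entirely explicit; the paper's approach is more structural but relies on \cite{LOT}. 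One further point in your favour: the paper's proof does not actually spell out uniqueness, whereas your argument---observing that \eqref{DOWN} pins down every tail sum $S_N-S_j$, hence (with $S_N=n$) every $S_j$---handles it cleanly. In fact your uniqueness argument shows that \eqref{DOWN} alone already determines $v$, so \eqref{ONE} is a consequence rather than an independent constraint; you might note this explicitly. The edge-case worries you flag (when $jn/N$ or $in/N$ is an integer) are already correctly handled by your case split on whether $(j-i)n/N\in\Z$, so no separate treatment is needed there.
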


\begin{proof}
Fix integers $1 \le n \le N$. {\black 
Put
\[
\label{vk}
v_k=\bigg\lfloor (N-k+1) \frac{n}{N}  \bigg\rfloor - \bigg\lfloor (N-k) \frac{n}{N}  \bigg\rfloor.
\]
Since $(N-k+1)n/N=(N-k)n/N+n/N$ and $n\le N$, we have $v_k\in \{0,1\}$ for all $k$. Clearly, $v_1=1$. Note that
\[
\sum_{i < k \le j} v_k = \bigg\lfloor (N-i) \frac{n}{N}  \bigg\rfloor - \bigg\lfloor (N-j) \frac{n}{N} \bigg\rfloor 
\]
\[
=(N-i) \frac{n}{N}-\alpha - ((N-j) \frac{n}{N} - \beta) = (j-i) \frac{n}{N} + (\beta-\alpha),
\]
for some $\alpha,\beta \in [0,1)$. But then $\beta-\alpha \in (-1,1),$ that is, \eqref{UP} holds.
In particular, 
\[
\sum_{i < k \le N} v_k = \bigg\lfloor (N-i) \frac{n}{N}  \bigg\rfloor,
\]
finishing the proof.
}
\end{proof}

We use the terminology {\emph ``$(N,n)$-balanced sequence''}
for a sequence $v=(v_1,\ldots,v_N)$ satisfying the conditions 
\eqref{UP}, \eqref{DOWN} and \eqref{ONE} of Lemma \ref{sturmlem}.

\begin{proposition}
\label{-qcrit}
Every negative rational number that is not an integer is critical.
\end{proposition}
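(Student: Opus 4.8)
The plan is to imitate the construction in Proposition \ref{-ncrit}, but with the ``blue backbone'' of consecutive short edges replaced by an $(N,n)$-balanced sequence of blue and red short edges, so that the two competing maximal paths differ in exactly the right way to force criticality at $x = -n/N$. Write the target rational as $x = -n/N$ with $1 \le n < N$ and $\gcd(n,N)=1$ (the integer case $n=N$ being already covered). First I would fix an $(N,n)$-balanced sequence $v = (v_1,\ldots,v_N) \in \{0,1\}^N$ as produced by Lemma \ref{sturmlem}, and I would use it to colour a block of $N$ consecutive short edges: edge $(k-1,k)$ in that block is blue if $v_k=1$ and red if $v_k=0$. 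The total weight of that block, traversed edge by edge, is then $n\cdot 1 + (N-n)\cdot x = n - (N-n)\,n/N = n^2/N$; more importantly, by property \eqref{DOWN} the ``tail weights'' $\sum_{i<k\le N} v_k = \lfloor (N-i)n/N\rfloor$ are controlled, which is what will let a long blue bypass edge compete exactly.

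Concretely I would take a graph on $[0,n']$ (for a suitable $n'$ built from two copies of the balanced block, glued as in Proposition \ref{-ncrit}) whose edges are: the $N$ short edges of the balanced block coloured according to $v$ (on, say, $[1,N+1]$), a symmetric second balanced block further to the right, plus two long blue ``shortcut'' edges analogous to $(0,\ell+2)$ and $(\ell+1,2\ell+3)$ in Proposition \ref{-ncrit}, together with enough extra short blue edges near the two ends so that $G \in \fH_{n'}$ (every interior vertex lies on some blue $0$-to-$n'$ path, and consecutive-vertex reachability fails somewhere, exactly as checked there). The two competing paths are: $\pi_1$, which uses the long blue shortcuts to skip over (part of) the balanced blocks, and $\pi_2 = (0,1,2,\ldots,n')$, the full path through every vertex, which pays the red edges of the balanced blocks at weight $x=-n/N$. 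The balancedness conditions \eqref{UP}, \eqref{DOWN} are precisely tuned so that $w^x_G(\pi_1) = w^x_G(\pi_2) = W^x_G$: \eqref{DOWN} equates the weight of a shortcut blue edge of length $N-i$ with the weight of the balanced subword it bypasses (both equal $\lfloor(N-i)n/N\rfloor$ after the red edges are priced in), and \eqref{UP} guarantees no \emph{other} path beats this common value, by the same nesting/short-edge arguments as in Lemmas \ref{nested}--\ref{longfact} adapted to $x<0$ (there a red short edge has negative weight, so it is never advantageous to insert extra red short edges, and a long red edge is even worse). Finally $\pi_1$ and $\pi_2$ have different numbers of red edges — $\pi_2$ carries the $N-n$ red edges of each balanced block while $\pi_1$ carries strictly fewer — so condition 2\o) of Definition \ref{defcrit} holds and $x=-n/N$ is critical.

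The main obstacle, I expect, is the bookkeeping needed to show $W^x_G$ is attained \emph{only} by paths of the two claimed forms, or at least that every maximal path falls into one of finitely many shapes all sharing the weight $W^x_G$ — in other words, verifying that the balanced-sequence weight bounds \eqref{UP}, \eqref{DOWN} genuinely dominate the weight of an arbitrary path that mixes partial shortcuts with partial balanced traversals and possibly re-enters a block. For $x<0$ this is somewhat easier than the positive case because red edges are a liability, so I would first reduce (as in Lemma \ref{longfact}'s spirit) to paths whose only long edges are blue and are among the designated shortcuts, after which the problem becomes the purely arithmetic statement that for every way of splitting $[1,N]$ into a bypassed part and a walked part, the balanced bound \eqref{UP} makes the walked part weigh at least as much as replacing it by a single blue shortcut — which is exactly \eqref{lexi}/\eqref{DOWN} from the proof of Lemma \ref{sturmlem}. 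Assembling these estimates and the $\fH_{n'}$-membership check is routine but needs care, and I would present it with a figure analogous to the one in Proposition \ref{-ncrit}.
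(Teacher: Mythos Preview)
Your sketch has the right instinct—Lemma \ref{sturmlem} is indeed the combinatorial core—but the construction you outline does not work as stated. First, writing $x=-n/N$ with $1\le n<N$ covers only $(-1,0)$; every non-integer rational $x<-1$ (e.g.\ $x=-3/2$) is left out, and no scaling brings it back to this range. The paper instead writes $x=-\ell+s/t$ with $\ell=-\lfloor x\rfloor\ge 1$ and $0<s<t$, and uses a $(t,t-s)$-balanced sequence; the integer part $\ell$ then sets the \emph{lengths} of certain runs of blue short edges (``blue islands'') rather than colouring individual edges, which is how all $\ell\ge 1$ are handled at once.

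Second, your two-block graph will not lie in $\fH_{n'}$. A balanced block with $n<N/2$ contains consecutive red short edges, and a vertex between two such edges lies on no blue path at all—so condition 1\o) of Definition \ref{hndef} fails in the interior of the block, not ``near the ends''. The paper's graph is considerably more elaborate: it adds pivot edges $(a_{j-1},a_j)$, hop edges $(a_j{-}1,a_j{+}1)$, and long hyper edges from $0$ and to $n$, precisely to thread a blue path through every vertex while still violating 2\o). Third, your weight-matching claim is off: a blue shortcut edge has weight $1$ regardless of its span, while the traversed tail has weight $\lfloor(N{-}i)n/N\rfloor + x\big((N{-}i)-\lfloor(N{-}i)n/N\rfloor\big)$, which does not collapse to $\lfloor(N{-}i)n/N\rfloor$. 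In the paper the balanced entries $v_j$ do not colour edges; they adjust island lengths to $\ell-1+v_j$, and it is inequality \eqref{UP}, not \eqref{DOWN}, that bounds the blue-edge count of a competing path, via a four-step case analysis (Claims \ref{claim1}--\ref{claim4}) that does not reduce to the $0<x<2$ lemmas you invoke.
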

\begin{proof}
{\black Let $x$ be} a strictly negative rational but not an integer. We can then write
\[
x=-\ell+ \frac{s}{t},
\] 
where $\ell, s, t$ are positive integers, $t>1$, 
with $s,t$ coprime, $s<t$, $\ell t -s > 0$.
Define
\begin{align*}
m &: = t(\ell+3)-(s+1)
\\
n &:=3m.
\end{align*}
For this $n=n(x)$, we shall exhibit a graph $G = G(x) \in \fG_n$ by first defining a set 
of special vertices $a_0, \ldots, a_t$ and then by defining its edges.
Let $(v_1, v_2,\ldots, v_t)$ be the $(t,t-s)$-balanced sequence, 
as in Lemma \ref{sturmlem}. 
Thus, $v_1=1$, $v_t=0$ and $\sum_{j=1}^t v_j = t-s$.
Then set
\begin{align*}
a_0 &:=m, \\
a_1 &:= a_0+ (\ell+1) + v_1, \\
a_j &:= a_{j-1} + (\ell+2) + v_j, \quad j=2,\ldots, t.
\end{align*}
Note that
\[
a_0 = a_t-a_0 = n-a_t =m.
\]
Indeed,
\begin{multline*}
a_t-a_0 = \sum_{j=1}^t (a_j-a_{j-1})
= (\ell+1+ v_1) + \sum_{j=2}^t (\ell+2+v_j)
\\
=(\ell+1) + (t-1) (\ell+2) + (t-s) = t(\ell+3)-(s+1)=m.
\end{multline*}
Hence the vertex set $[0,n]$
is split into three sections, $[0,a_0]$, $[a_0, a_t]=[m,2m]$, $[a_t, n]$, 
of length $m$ each.
We use the term {\emph pivot vertices} for the vertices $a_0, \ldots, a_t$.
We now define the edge set $\edges(G)$ of $G$ as the union of 
\begin{align*}
&\edges_1 = 
\big\{(i,i+1):\, i \in [0, n-1] \setminus\{a_0,a_1-1,a_1,a_1+1,\ldots,a_{t-1}-1,a_{t-1},a_{t-1}+1, a_t-1\}\big\},
\\
&\edges_\piv = \big\{(a_0,a_{1}),\, (a_1,a_2),\ldots, (a_{t-1},a_t)\big\} ,
\\
&\edges_\hop=\big\{(a_j-1,a_j+1):\,{\black 1} \le j \le t-1\big\}\cup \{(a_t-1, a_t)\},
\\
&\edges_\hyper =  \big\{(0,a_0+1)\big\}
\cup \big\{ (0, a_j+2):\, 1 \le j \le t-1\big\}
\cup \big\{ (a_j+1,n):\, 1 \le j \le t-1 \big\}.
\end{align*}
\begin{figure}[h] 
\includegraphics[width=\textwidth,height=3cm]{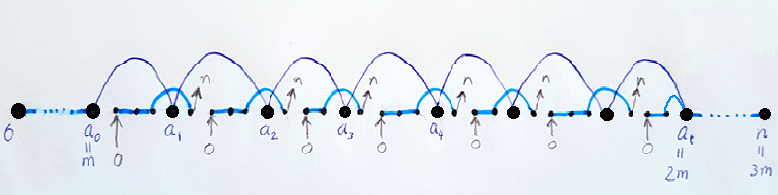}
\captionof{figure}
{The graph $G$ for the proof of Proposition \ref{-qcrit}. 
The pivot vertices are $a_0, \ldots, a_t$.
Edges between successive pivots comprise the set $\edges_\piv$. 
Edges of the form $(a_j-1, a_j+1)$, $j=1,\ldots,t-1$ and the edge $(a_t-1,a_t)$
comprise the set $\edges_\hop$. 
The edges in $\edges_\hyper$ are not all shown. 
The segments $[0,m]$ and $[2m,3m]$ contain only short edges and are not in scale.
Edges in $G$ are colored blue and nonedges red.
The union of short blue edges contained in an interval between successive
pivots is called blue island.
To each rational number $x<0$, which is not an integer, there
corresponds a graph $G$ of the form shown in the figure.
See Remark \ref{remstu} below for a concrete example.
}
\label{lalala}
\end{figure}
\\
As per our convention,
each edge in $\edges(G) = \edges_1 \cup \edges_\piv \cup \edges_\hop
\cup \edges_\hyper$ is coloured blue and has weight $1$.
All nonedges are coloured red and are given weight $x=-\ell+s/t$.
If $\pi$ is a path between two vertices its weight equals the number of
blue edges of $\pi$ plus $x$ times the number of its red edges.
We next define some special paths that we call {\emph blue islands}:
\[
I_j := \begin{cases}
(a_{j-1}+2, a_{j-1}+3, \ldots, a_j-1), & \text{ if $1 < j \le t$}
\\
(a_0+1, a_0+2, \ldots, a_1-1), & \text{ if $j=1$}
\end{cases}.
\]
Each $I_j$ is a graph whose edges belong to $G$ and so,
by convention, are all blue.
Note that 
\begin{equation}
\label{Ilength}
\text{$I_j$ has length $\ell+v_j-1$ for all $1 \le j \le t$.}
\end{equation}
A blue island may be empty: this happens
if and only if $\ell=1$ and $v_j=0$.
Hence if $x$ is a negative rational number with $|x|>1$ then
all blue islands are nonempty.
It is important to observe that the way an ``interior'' island $I_j$, $2 \le j \le t-1$,
sits within the interval $[a_{j-1}, a_j]$,
is different that the way that a ``boundary'' island $I_1$ or $I_t$ does.
For $j=2, \ldots, t-1$, there are two red short edges 
preceding $I_j$ and one short red edge succeeding it within $[a_{j-1}, a_j]$
The first island $I_1$ has one short short red edge before and one after it in $[a_0,a_1]$.
The last island $I_t$ has {\black two short red edges} before and one blue short edge {\black (from $\edges_\hop$)}
after it in $[a_{t-1},a_t]$.

We next show that $G$ is in $\fH_n$. We check the conditions
of Definition \ref{hndef}.

{\bf (a)} We show that for each $k \in [1,n]$ there is a blue path from $0$ to $k$
(abbreviate this as $0 \leadsto k$).
(i) If $1 \le k \le a_0$ then $0 \leadsto k$ via short edges.
(ii) If $k$ is a pivot then  $0 \leadsto a_0$ via short edges and $a_0 \leadsto a_j$ 
via edges in $\edges_\piv$.
(iii) If $a_j+2 \le k \le a_{j+1}-1$, $1 \le j \le t-1$,
then there is an edge in $\edges_\hyper$ from
$0$ to $a_j+2$ and then $a_j+2 \leadsto k$ via short edges.
The case $j=0$ is similar.
(iv) If $k=a_j+1$, $1\le j \le t-1$,
then there is an edge $(a_j-1, a_j+1)$ and then $0 \leadsto a_j-1$
by (iii). If $k=a_0+1$ then $(0, a_0+1) \in \edges_\hyper$.

{\bf (b)} 
We can similarly 
show that for each $k \in [0,n-1]$ there is a blue path from $k$ to $n$.
(i) If $a_t \le k \le n-1$ then $k \leadsto n$ via short edges only,
(ii) If $k=a_j$ then $a_j \leadsto n$ via edges in $\edges_\piv$ and $\edges_\short$.
(iii) If $k=a_j+1$, $1 \le j \le t-1$ then $(a_j+1, n) \in \edges_\hyper$.
(iv) If $a_j+2 \le k \le a_{j+1}-1$, $1 \le j \le t-1$,
then $k \leadsto a_{j+1}-1$ via short edges,
$(a_{j+1}-1, a_{j+1}+1) \in \edges_\hop$ and $a_{j+1}+1 \leadsto n$ by (iii).
For $j=0$, we have that $a_0+1 \leadsto a_1-1$ via short edges,
$(a_1-1, a_1+1) \in \edges_\hop$ and reduce to (iii) again.

{\bf (c)}
We next show that for any $k \in [1, n-1]$ there is an $i$ such that
$k$ and $i$ are not connected via a blue path.
(i) If $1 \le k \le a_0$ then take $i=a_0+1$.
(ii) If $a_0+1 \le k \le a_1-1$ then take $i=a_1$.
(iii) If $k = a_j$, $1 \le j \le t-1$, then take $i=a_j+1$
(iv) If $k = a_j+1$, $1 \le j \le t-1$, then take $i=a_j$
(v) If $a_j+2 \le k \le a_{j+1}-1$, $1\le j \le t-2$ then take $i=a_{j+1}$.
(vi) If $a_{t-1}+2 \le k \le n$ then take $i=a_{t-1}+1$.

The arguments of {\bf (a)}, {\bf (b)}, {\bf (c)} above show that $G \in \fH_{n}$.

To simplify notation in what follows, we simply write 
$w(\pi)$, $N(\pi)$, $\overline N(\pi)$
for the weight, number of blue edges, number of red edges, respectively,
for any path $\pi$ with arbitrary endpoints. Thus, $w(\pi) = N(\pi)
+ x \overline N(\pi)$.

We next show that the weight of each path $\pi$ from $0$ to $n$ is at most
$2m+t$:
\begin{equation}
\label{finalmax}
w(\pi) \le 2m+t, \quad \text{for all } \pi \in  \Pi_{0,n}.
\end{equation}

We observe that this weight is achievable by the path
\begin{equation}
\label{pi1}
\pi_1 =(0,1,\ldots,a_0-1,a_0,a_1,\ldots,a_t,a_{t}+1,\ldots,n),
\end{equation}
consisting of the first $m$ short edges of $[0,m]$,
the $t$ edges of $\edges_\piv$ and the last
$m$ short edges of $[m,2m]$.
Since it has no red edges its weight equals its length, that is, $2m+t$.

We first show that \eqref{finalmax} holds if $\pi$ contains an edge from
$\edges_\hyper$.
Suppose that $\pi \in \Pi_{0,n}$ has an edge in $\edges_\hyper$ of the form $(0,k)$.
Then $k > a_0=m$. Since the restriction of $\pi$ on $[k,n]$ has weight at most $n-k$,
the weight of $\pi$ is at most $n-k+1 < n-m+1 = 2m+1<2m+t$ since $t>1$.
Similarly, if $\pi$ uses a an edge from $\edges_\hyper$ of the form $(k,n)$,
we have $k< 2m$ and therefore the weight of $\pi$ is at most $k+1 < 2m+t$.

We next show that if $\pi \in \Pi_{0,n}$ has no edges in $\edges_\hyper$ 
then there is a path $\pi' \in \Pi_{0,n}$ containing all short blue edges
in $[0,m]$ and all short blue edges in $[2m,3m]$ such that
$w(\pi') \ge w(\pi)$.
Let $0, u_1, u_2, \ldots, u_r$ be the vertices of $\pi$ on the segment $[0,m]$,
listed in increasing order, and let $u$ be the vertex of $\pi$ succeeding $u_r$. {\black Assume $r<m$. Then at least one of the edges $(0,u_1), (u_1,u_2), \ldots, (u_r,u)$ must be red.}
Replace the subpath $(0,u_1, \ldots, u_r,  u)$,
that  has weight {\black at most} $r+x$, 
by the subpath $(0,1,\ldots, m, u)$, that has weight $m+x
> r+x$ {\black and obtain a contradiction with the maximality of $\pi$.}
So it suffices to show the statement \eqref{finalmax} for paths
that contain all vertices $[0,m]$ and, similarly, all vertices in $[2m, 3m]$.
Such paths have weight $2m$ plus the weight of their restriction on 
$[m,2m]=[a_0, a_t]$.

Therefore, it is enough to show \eqref{finalmax} for all $\pi \in \Pi_{0,n}$
that contain all short blue edges
in $[0,m]$ and all short blue edges in $[2m,3m]$.
Such a $\pi$, necessarily, has no edges in $\edges_\hyper$.
Moreover, $w(\pi) = 2m + w(\pi')$ where $\pi' \in \Pi_{a_0,a_t}$.
Hence \eqref{finalmax} will be proved once we prove that
\begin{equation}
\label{action}
w(\pi) \le t, \quad \text{for all } \pi \in \Pi_{a_0,a_t}.
\end{equation}
To show this, we need to consider the edges of 
any $\pi\in\Pi_{a_0,a_t}$ that are in $\edges_\piv$,
each of which contributes $1$ to the weight of $\pi$,
as well as the restiction of $\pi$ on the intervals between the end of a
pivot edge and the beginning of the next one.
Suppose that the following claim is true.
\begin{claim}
\label{claim1}
If, for all ${\black 0} \le i < j \le t$ and any path $\pi \in \Pi_{a_i, a_j}$
that contains no edges from $\edges_\piv$,
we have $w(\pi) \le j-i$ then \eqref{action} holds.
\end{claim}
To see how \eqref{action} follows from this claim,
let $e_j:=(a_{j-1},a_j)$, $1 \le j \le t$, be a labelling of the elements
of $\edges_\piv$, and
consider a path $\pi \in \Pi_{a_0,a_t}$. 
Let $e_{j_1}, \ldots, e_{j_r}$ be the edges of $\pi$ that are in $\edges_\piv$, 
where $1 \le j_1 < j_2 < \cdots < j_r \le t$.
Consider the restrictions $\pi^{(j_1)}$, $\pi^{(j_2)}, \ldots, 
\pi^{(j_r)}, \pi^{(j_{r+1})}$ of
$\pi$ on the sets of vertices $[a_0, a_{j_1-1}]$, $[a_{j_1}, a_{j_2-1}],
\ldots, [a_{j_r}, a_{t}]$, respectively
(noting that some of these restrictions may be trivial).
Since none of these restrictions contain edges from $\edges_\piv$, 
we have, by Claim \ref{claim1},
\[
w(\pi^{(j_1)}) \le j_1-1,
\quad w(\pi^{(j_k)}) \le (j_k-1){\black -j_{k-1}}, k=2,\ldots,r, 
\quad w(\pi^{(j_{r+1})}) \le t-j_r.
\]
Hence 
\[
w(\pi) \le \sum_{k=1}^{r+1} w(\pi^{(j_k)}) + r,
\]
where the last $r$ is the total weight of all edges of $\pi$ from $\edges_\piv$;
see Figure \ref{lalala}.
Thus
\[
w(\pi) \le j_1-1 + \sum_{k=2}^{r-1} ((j_k-1)-j_{k-1}) + (t-j_r)+r
= j_1 + \sum_{k=2}^r (j_k-j_{k-1}) + (t-j_r) = t.
\]

It remains to prove Claim \ref{claim1}, and this will be done in a few steps.
First consider $\pi \in \Pi_{a_i,a_j}$ that touches every blue island between $a_i$ and 
$a_j$, meaning that for each $i < k \le j$, $\pi$ has a vertex from $I_k$.
Such a $\pi$, necessarily, has no edges from $\edges_\piv$.

\begin{claim}
\label{claim2}
For all $0 \le i < j \le t$,
and all $\pi \in \Pi_{a_i,a_j}$ that touch each blue island between $a_i$ and $a_j$,
we have $w(\pi) \le j-i$.
\end{claim}

\proof To see this, let $\pi$ be a path as in the statement of the claim.
The number $N(\pi)$ of blue edges of $\pi$ is at most the number of all
blue edges between $a_i$ and $a_j$. 
This  is at most the sum $\sum_{i<k \le j} |I_k|$ of the lengths
of all blue islands between $a_i$ and $a_j$ plus the number of edges from $\edges_\hop$
between $a_i$ and $a_j$;
if $j < t$ there are $j-i-1$ such edges;
if $j=t$ there are $t-i$ such edges.
We consider the two cases separately.
First, assume $j<t$. Then, using \eqref{Ilength},
\begin{multline*}
N(\pi) \le \sum_{i<k \le j} |I_k| + (j-i-1)
= \sum_{i<k \le j} (|I_k|+1) -1 
\\ = \sum_{i<k \le j} (\ell+v_k) -1
= (j-i) \ell + \sum_{i<k \le j} v_k -1.
\end{multline*}
Since $v=(v_1,\ldots, v_t)$ is $(t,t-s)$-balanced, we have, by \eqref{UP},
\[
\sum_{i<k \le j} v_k -1  \le \bigg\lceil (j-i) \frac{t-s}{t}  \bigg\rceil -1
\le (j-i) \frac{t-s}{t}
\]
and so
\begin{equation}
\label{bothcases}
N(\pi) \le (j-i) (\ell + 1 - s/t) = (j-i)(1-x).
\end{equation}
If $j=t$ we have
\begin{multline*}
N(\pi) \le \sum_{i<k \le t} |I_k| + (t-i)
= \sum_{i<k \le t} (|I_k|+1) 
= \sum_{i<k \le t} (\ell+v_k)
= (t-i) \ell + \sum_{i<k \le t} v_k.
\end{multline*}
But, from \eqref{UP},
\[
\sum_{i<k \le t} v_k = \bigg\lfloor (t-i) \frac{t-s}{t}  \bigg\rfloor \le (t-i) \frac{t-s}{t},
\]
and hence \eqref{bothcases} holds for all $j$, including the $j=t$ case.
Since $\pi$ touches every $I_k$, $i<k\le j$, the edge with endpoint the  first
common vertex of $\pi$ and $I_k$ is red, by construction.
Hence the number of red edges of $\pi$ is at least $j-i$.
Recalling that $x<0$, we have
\[
w(\pi) =N(\pi) + x \overline N(\pi) 
\le (j-i) (1-x) + x(j-i) = j-i,
\] 
proving the claim.
\qed

\medskip
We next need to see what happens when the premise of Claim \ref{claim2} fails, that is,
when a path $\pi \in \Pi_{a_i,a_j}$ avoids some island between $a_i$ and $a_j$.
We first consider the case where $\pi$ avoids an $I_k$ other than the last island $I_j$.

\begin{claim}
\label{claim3}
Let ${\black 0} \le i < j \le t$ and $\pi \in \Pi_{a_i,a_j}$ that contains no edges from $\edges_\piv$
and  avoids some island $I_k \neq I_j$.
Then there is another path $\pi'$ that includes $I_k$
and $w(\pi') > w(\pi)$.
\end{claim}

\proof  Consider a path $\pi$  from $a_i$ to $a_j$ with no edges from $\edges_\piv$,
avoiding some $I_k \neq I_j$.
Then $\pi$ contains an edge $(a,b)$ such that
$a$ is strictly smaller than the minimum vertex of $I_k$
and $b$ strictly larger than the maximum vertex of $I_k$.
Since $\pi$ has no edges from $\edges_\piv$, $(a,b)$ is red.
\\
-- If $b=a_k$, then, since $k \neq j$, there is a vertex $c$ of $\pi$
such that $(b,c)$ is an edge of $\pi$.
Necessarily, $(b,c)$ is red.
Consider the subpath $\sigma=(a,b,c)$ of $\pi$ that has weight $w(\sigma)=2x$.
Replace $\sigma$ by the path $\sigma'$
with vertices $a$, all the vertices of $I_k$, {\black $a_k+1$} and $c$.
We estimate the weight $w(\sigma') = N(\sigma') + x \overline N(\sigma')$ from below.
It has $N(\sigma') = |I_k|+1$ blue edges (the edges of $I_k$ and the blue edge
$(a_k-1, a_k+1) \in \edges_\hop$)
and $\overline N(\sigma') \le  2$ red edges.
This is because the first edge of $\sigma'$ is red whereas the last edge,
$(a_k+1,c)$, if it exists (that is, if $c \neq a_k+1$), is also red.
Hence $w(\sigma') \ge |I_k|+1 + 2x = \ell+v_k+ 2x > 2x = w(\sigma)$. 
Thus, replacing $\sigma$ in $\pi$ by $\sigma'$ we increase the weight.
\\
-- If $b > a_k$, replace $\sigma=(a,b)$  by the path $\sigma'$
with vertices $a$, all the vertices of $I_k$, $a_{k+1}$ and $b$.
By exactly the same estimation as in the previous case, 
$w(\sigma') \ge |I_k|+1 + 2x  = \ell+v_k+ 2x \ge \ell + 2x$.
Now, since $x > -\ell$, we have $\ell+2x > x$, so $w(\sigma') > w(\sigma)$.
Again then, replacing $\sigma$ in $\pi$ by $\sigma'$ we increase the weight.
\qed

\begin{claim}
\label{claim4}
Let ${\red 0} \le i < j \le t$ and $\pi \in \Pi_{a_i,a_j}$ that contains no  edges from $\edges_\piv$
and  includes each of the $I_{i+1}, \ldots, I_{j-1}$.
Then $w(\pi) \le j-i$.
\end{claim}

\proof  
Fix a $\pi$ from $a_i$ to $a_j$ with no edges from $\edges_\piv$ that
includes each of the $I_{i+1}, \ldots, I_{j-1}$.

{\black First assume $j\ge 2$.
Note that} either  $\pi$ touches $I_j$ or not.
If it does, use Claim \ref{claim2} to conclude that $w(\pi) \le j-i$.
If it does not, let $a$ be the vertex preceding $a_j$ in $\pi$.
Since $\pi$ does not touch $I_j$ we have that $a \le a_{j-1}+{\black 1}$, provided that
$j \neq 1$.
Since $\pi$ is not allowed to have an edge in $\edges_\piv$,
we must have $a \neq a_{j-1}$. 
{\black If $j=i+1$ then clearly $a=a_{j-1}+1$; otherwise, since} $\pi$ includes $I_{j-1}$, it follows that $a \ge a_{j-1}-1$.
Hence, {\black in any case,} the only possible values of $a$
are $a_{j-1}\pm 1$. The weight of the edge $(a,b)$ is $x$.
\\
-- If $a=a_{j-1}- 1$, let $\tau$ be the subpath of $\pi$ from $a_i$ to $a$.
Hence $w(\pi)=w(\tau) + x$.
Now let $\pi'$ be the path consisting of $\tau$ and followed by
the edges $(a,a_{j-1})$ and $(a_{j-1}, a_j)$;
the first is red, the second blue as it is in $\edges_\piv$.
Hence $w(\pi') = w(\tau)+x+1 > w(\pi)$.
But the path {\black $\tau$} together with the edge $(a,a_{j-1})$ is a path
from $a_i$ to $a_{j-1}$ containing all blue islands between $a_i$ and $a_{j-1}$ and,
by Claim \ref{claim2}, has weight at most $(j-1)-i$.
Hence $w(\pi) < w(\pi') \le (j-1)-i+1 = j-i$.
\\
-- If $a=a_{j-1}+1$, let $c$ be the vertex of $\pi$ preceding $a$
and let $\tau$ be the subpath of $\pi$ from $a_i$ to $c$. {\black If $c=a_{i-1}$ then simply replace two red edges $(c,a)$ and $(a,a_j)$ by a single blue edge $(c,a_j)$. Otherwise, the}
weight $w(c,a)$ of the last edge $(c,a)$ of $\tau$ is either $1$ if
$z=a_{j-1}-1$ or $x$, otherwise.
Hence $w(\pi) \le  w(\tau) + 1+x$.
Consider now the path $\pi'$ consisting of $\tau$ and followed by
the red edge $(c, a_{j-1})$ and the blue edge $(a_{j-1},a_j) \in {\black \edges_\piv}$.
Hence $w(\pi') = w({\black \tau}) + 1 + x \ge w(\pi)$.
On the other hand, arguing as above, $w(\pi') \le j-i$.
Hence, again $w(\pi)\le j-i$.
\\
It remains to consider the case $j=1$. Then $i=0$ and so we must consider
a path $\pi$ from $a_0$ to $a_1$.
If $\pi$ touches $I_1$ then, by Claim \ref{claim2}, it has weight at most $1$.
If it does not, then, necessarily, $\pi$ consists of the single edge $(a_0,a_1)$,
which is impossible since  $(a_0,a_1) \in \edges_\piv$.
\qed

\proof[Proof of claim \ref{claim1}]
Consider a path $\pi \in \Pi_{a_i,a_j}$
with no edges from $\edges_\piv$.
If this path touches each blue island between $a_i$ and $a_j$ then, by Claim \ref{claim2},
it has weight at most $j-i$.
If it avoids some of the $I_k$, $i <k < j$, then, by Claim \ref{claim3}, there
is another path $\pi'$ that includes all of the $I_k$, $i <k < j$,
such that $w(\pi) \le w(\pi')$.
But $\pi'$, by Claim \ref{claim4}, has weight at most $j-i$.
\qed

\medskip
To finish the proof of Proposition \ref{-qcrit}, it remains to show that there are
two different maximal paths from $0$ to $n$,
that is, both with weight $2m+t$,
but with different number of edges. The first one is $\pi_1$ defined by \eqref{pi1}.
The second one is
\[
\pi_2 :=(0,1,\ldots, a_1-1,a_1+1,\ldots,a_2-1,a_2+1, 
\ldots, a_{t-1}-1, a_{t-1}+1,\ldots,n).
\]
It contains $[0,m] \cup [m,2m]$ (as it should).
It also contains all blue islands, all edges in $\edges_\hop$
and has a nonzero number of red edges.
To find its weight we count the number of edges of each color.
Blue island $I_j$ has length $\ell-1+v_j$ for all $1 \le j \le t$.
In addition to these, $\pi_2$ contains all {\black $t$} edges in $\edges_\hop$ 
and the blue edge $(a_t-1, a_t)$; hence, in total,
$\pi_2$ has $2m + \sum_{j=1}^t (\ell-1+v_j)+t
= 2m+\ell t + \sum_{j=1}^t v_j = 2m+\ell t + (t-s)$ blue edges.
On the other hand, 
it  contains exactly $t$ red edges, the edges
$(a_0, a_0+1)$, $(a_1+1, a+1+2), \ldots, (a_{t-1}+1, a_{t-1}+2)$.
Hence its weight is $xt + (2m+\ell t + t-s) = 2m+t$.
This concludes the proof of the criticality of each rational $x< 0$ which
is not an integer.
\end{proof}

\begin{remark}[Sturm graph]
\label{remstu}
In the proof above, we constructed, for each negative rational number $x$
a graph $G \equiv G(x)$ that depends only on $x$, in fact, on its representation
as $x=-\ell + (s/t)$, where $-\ell=\lfloor x \rfloor$ and $s, t$ coprime
positive integers. (Actually, the coprimality of $s$ and $t$ is not important in the 
proof above.)
The graph is on $n=  3 t(\ell+3)- 3(s+1)$ vertices
and has edges distributed according to the unique binary word of length
$t$ that is $(t,t-s)$-balanced, as obtained by Lemma \ref{sturmlem},
a corollary to Sturm's lemma. 
Hence the map $x \mapsto  G(x)$ is a bijection.
For lack of any terminology, we refer to $G(x)$ as a {\bf\em Sturm graph}.
This remark might be of independent interest in future
research on properties of weighted Barak-Erd\H{o}s graphs.
\end{remark}

\begin{remark}[An example]
\label{remexa}
As an example, we take $x=-11/7=-2+(3/7)$.
Then $\ell=2, s=3, t=7$, $m=31$, $n=93$.
Figure \ref{lalala} above is actually drawn for this example.
We can check that $(v_1, \ldots, v_7) = (1,1,0,1,0,1,0)$ is the  $(7,4)$-balanced
sequence.
Hence the pivots $a_0, \dots, a_7$ are at distances
$a_1-a_0= \ell+1+v_1=4$, 
$a_2-a_1 = \ell+2+v_2 = 5$,
$a_3-a_2=4$, 
$a_4-a_3=5$,
$a_5-a_4=4$, 
$a_6-a_5=5$, 
$a_7-a_6=4$.
In the proof of \eqref{-qcrit}, we showed that the maximum weight of all paths from 
$a_0$ to $a_t$ is $t$ and that there is a maximal path from 
$a_0$ to $a_t$ that includes all blue islands. 
However, if $i\neq 0$ or $j \neq t$, even though the maximum weight of all paths from 
$a_i$ to $a_j$ is $j-i$, it is not necessarily true thay there is a maximal path
from $a_i$ to $a_j$ that includes all blue islands in between.
Indeed, let $i=1$ and $j=3$ and consider the path
\[
\pi = (a_1,a_1+2, a_1+3, a_2-1, a_2+1, a_2+2, a_3-1, a_3).
\]
This path contains the blue islands $I_2$ and $I_3$
and has weight $4+3x=-5/7$.
Consider also the path
\[
\pi':=(a_1,a_1+2, a_1+3, a_2-1, a_2+1, a_3)
\]
that avoids $I_3$. See Figure \ref{exfig}.
It has weight $3+2x=-1/7$, larger than the weight of $\pi$.
\begin{figure}[h] 
\includegraphics[width=8cm,]{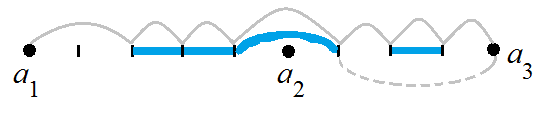}
\captionof{figure}
{Consider two paths, $\pi, \pi'$, from $a_1$ to $a_3$.
The first one, $\pi$, is represented by the solid gray line.
The second one, $\pi'$, coincides with $\pi$ up to vertex $a_2+1$
and then uses the dotted edge.
We have $w(\pi) < w(\pi')$ even though $\pi'$ does not use the second
blue island.
}
\label{exfig}
\end{figure}
\end{remark}

\begin{proof}[Proof of Theorem \ref{thmcrit} and Theorem \ref{thmmain}]
By Lemma \ref{0crit} and Propositions \ref{-ncrit} and \ref{-qcrit} every
rational number that is less than or equal to $0$ is critical.

By Lemma \ref{1noncrit} and Proposition \ref{+crit},
 every number of the form $k$ or $1/k$,
where $k$ is a positive integer other than $1$, is critical.

Theorem \ref{thmdiffcrit} says that the set of critical points
is precisely the set of points where $C_p$ is not differentiable.

By Lemma \ref{irrdiff}, if $x \ge 0$ and irrational then $C_p$ is not differentiable
at $x$ and, by Theorem \ref{thmdiffcrit}, $x$ is not critical.

By Proposition \ref{+noncrit01} every number in $[0,1)$, 
except the reciprocals of positive integers, is not critical.
Using the scaling property (Proposition \ref{scaling1}) we have that
every number in $(1,\infty)$ that is not an integer is not critical.

Hence the set of critical points is precisely the one described in the statement of
Theorem \ref{thmcrit}, and so this theorem has been proved.

By Theorem \ref{thmdiffcrit}, the set of critical points coincides with
the set of points at which $C_p$ is not differentiable, and hence Theorem
\ref{thmmain} has been proved as well.
\end{proof}


\section{Additional remarks}
The problem we studied is a special instance of a more general one where
the edge weight is a random weight whose distribution $F=F_{\theta}$
depends on a parameter $\theta$.
The rate of growth of the heaviest path (the analog of $C_p(x)$; see Theorem \ref{thmx}) exists. Let it be denoted by $C(F_{\theta})$.
We can immediately translate the results of this paper to cover some
instances of this problem exhibiting the behaviour of the function
$\theta \mapsto C(F_{\theta})$.

We may continue to fix $p$ and to consider $\theta=x$.
Suppose that $F_x= (1-p) \delta_x +p Q$ where $Q$ is a probability
measure on $(0,\infty)$. Then  we expect that the behavior of
$C( (1-p) \delta_x +p Q)$ as a function of $p$ may be derived by using 
the techniques developed in \cite{MR16,MR19}, provided that
$\int y^2 Q(dy) < \infty$.
As a function of $x$, $C( (1-p) \delta_x +p Q)$ is continuous and convex.
Let
\[
K:= \{x < 0:\, -x \in \Q\} \cup \{x >0:\, x \in \N\setminus\{1\}\}
\cup \{x >0:\, 1/x \in \N\setminus\{1\}\}
\]
(this is the set that appeared in Theorem \ref{thmcrit}).
Assume that $Q$ is supported on $(0,\infty)$.
Then the set at which $C( (1-p) \delta_x +p Q)$ is not
a differentiable function of $x$ is the set
\[
\{x \in \R:\, \exists y \text{ such that } Q\{y\}>0 \text{ and } x/y \in K\}.
\]
In particular, if $Q$ has no atoms then 
$C( (1-p) \delta_x +p Q)$ is differentiable at all $x \in \R$.
To prove this claim, it suffices to consider the case $Q=\delta_y$, $y>0$.
Then, by an obvious scaling, 
\[
C( (1-p) \delta_x +p \delta_y) = y C_p(x/y), 
\]
where $C_p(\cdot)$ is as in Theorem \ref{thmx}.
The claim then follows from Theorems \ref{thmdiffcrit} and \ref{thmcrit}.
If $Q$ has positive mass on the negative real numbers then the behaviour is more
involved and may lead to studying completely different situations. For example, if $Q$ is supported on $(-\infty,0)$, 
then, switching the signs, the maximization problem is transformed
into a minimization one. To analyse the latter, a different technique is required. One may introduce further weights on
vertices like in \cite{FK18} and/or like in \cite{DFK12} and
analyse similar (non)differentiability questions. 
These extensions may lead to a new interesting direction of research with
many open problems. 
Another direction would be to study further analytical problems of $C(F_x)$, like existence of further derivatives in the case where
$Q$ is a continuous distribution. Another direction would be to
study properties of $C(F_{\theta})$ as a function of  two-dimensional parameter $\theta = (x,p)$.


\section*{Acknowledgements}
The research of TK was partially supported by 
the CNRS PRC collaborative grant CNRS-193-382.
The research of SF was partially supported by
the Akademgorodok  Mathematical Centre under  agreement no.\ 075-15-2019-1675
with the Ministry of  Science and Higher Education.
The research of AP was supported by 
the Sobolev Institute of Mathematics contract no.\ 0314-2019-0014.

The authors would like to thank an anonymous referee who read the manuscript
in great detail, made us aware of references \cite{KRS} and \cite{SZ},
and  suggested various improvements.
The referee's remarks helped us to largely improve the presentation of the paper, 
and, in particular, to provide a short proof for Lemma \ref{sturmlem} and add all
details of Proposition \ref{-qcrit}.

\vspace*{4cm}
\noindent
\underline{Sergey Foss},
School of Mathematical Sciences, Heriot-Watt University, Edinburgh;
\href{mailto:s.foss@hw.ac.uk}{s.foss@hw.ac.uk}
\\[4mm]
\noindent
\underline{Takis Konstantopoulos},
Department of Mathematical Sciences,
The University of Liverpool;
\href{mailto:takiskonst@gmail.com}{takiskonst@gmail.com}
\\[4mm]
\noindent
\underline{Artem Pyatkin}, Sobolev Institute of Mathematics, Novosibirsk;
\href{mailto:artempyatkin@gmail.com}{artempyatkin@gmail.com}

\end{document}